\newcommand{\cit}[1]{{\rm \textbf{#1}}}
\newcommand{\Ref}[2]{\cit{%
\ifthenelse{\equal{#1}{thm}}{Theorem}{}%
\ifthenelse{\equal{#1}{prop}}{Proposition}{}%
\ifthenelse{\equal{#1}{lem}}{Lemma}{}%
\ifthenelse{\equal{#1}{cor}}{Corollary}{}%
\ifthenelse{\equal{#1}{defn}}{Definition}{}%
\ifthenelse{\equal{#1}{oss}}{Remark}{}%
\ifthenelse{\equal{#1}{rmk}}{Remark}{}%
\ifthenelse{\equal{#1}{sec}}{Section}{}%
\ifthenelse{\equal{#1}{ex}}{Example}{}%
\ifthenelse{\equal{#1}{conj}}{Conjecture}{}%
\ifthenelse{\equal{#1}{ssec}}{Subsection}{}%
\ifthenelse{\equal{#1}{tab}}{Table}{}%
\ifthenelse{\equal{#1}{cla}}{Claim}{}%
\  \ref{#1:#2}%
}}
\newtheorem{prop}{Proposition}[section]
\newtheorem*{prop*}{Proposition}
\newtheorem{thm}[prop]{Theorem}
\newtheorem*{thm*}{Theorem}
\newtheorem{lem}[prop]{Lemma} 
\newtheorem{cor}[prop]{Corollary}
\newtheorem*{cor*}{Corollary}
\newtheorem{conj}[prop]{Conjecture}
\theoremstyle{definition}
\newtheorem{defn}[prop]{Definition}
\newtheorem{ex}[prop]{Example}
\theoremstyle{remark}
\newtheorem{oss}[prop]{Remark}
\newtheorem{rmk}[prop]{Remark}
\numberwithin{equation}{section}
\newcommand{\hk}{hyperk\"{a}hler }
\newcommand{\kahl}{K\"{a}hler }
\newcommand{\ktiposp}{$K3^{[2]}$-type }
\newcommand{\ktipo}{$K3^{[2]}$-type}
\newcommand{\kntipo}{$K3^{[n]}$-type}
\newcommand{\kntiposp}{$K3^{[n]}$-type }
\newcommand{\is}{irreducible symplectic}
\newcommand{\issp}{irreducible symplectic }
\newcommand{\ie}{i.e.\ }
\DeclareMathOperator*{\rk}{rk}
\newcommand{\Pic}{\mathrm{Pic}}
\DeclareMathOperator{\Ext}{Ext}
\DeclareMathOperator{\ch}{ch}
\newcommand{\id}{\mathrm{id}}
\newcommand{\mc}[1]{\mathcal{#1}}
\DeclareMathOperator*{\NS}{NS}
\newcommand{\Z}{\mathbb{Z}}
\newcommand{\PP}{\mathbb{P}}
\let\div\undefined
\DeclareMathOperator{\div}{div}
\DeclareMathOperator{\td}{td}
\newcommand{\aut}{\mathrm{Aut}}
\begin{document}

\title{Induced Automorphisms on Irreducible Symplectic Manifolds}
\author{Giovanni Mongardi}
\email{giovanni.mongardi@unimi.it}
\address{Department of Mathematics, University of Milan\\ via Cesare Saldini 50, Milan}
\author{Malte Wandel}
\email{wandel@kurims.kyoto-u.ac.jp}
\address{Research Institute for Mathematical Sciences,
Kyoto University, Kitashirakawa-Oiwakecho, Sakyo-ku, Kyoto, 606-8502 Japan}
%
\classification{Primary: 14J50 secondary: 14D06, 14F05 and 14K30}
\keywords{irreducible symplectic manifolds, automorphisms, moduli spaces of stable objects}
\thanks{The first named author was supported by FIRB 2012 ``Spazi di Moduli e applicazioni'', by SFB/TR 45 ``Periods, moduli spaces and arithmetic of algebraic varieties'' and partially by the Max Planck Institute in Mathematics.\\
The second named author was supported by JSPS Grant-in-Aid for Scientific Research (S)25220701 and by the DFG research training group GRK 1463 (Analysis, Geometry and String Theory).}

\begin{abstract}
We introduce the notion of induced automorphisms in order to state a criterion to determine whether a given automorphism on a manifold of \kntiposp is, in fact, induced by an automorphism of a $K3$ surface and the manifold is a moduli space of stable objects on the $K3$. This criterion is applied to the classification of non-symplectic prime order automorphisms on manifolds of \ktiposp and we prove that almost all cases are covered. Variations of this notion and the above criterion are introduced and discussed for the other known deformation types of \issp manifolds. Furthermore we provide a description of the Picard lattice of several \issp manifolds having a lagrangian fibration.
\end{abstract}

\maketitle
\addcontentsline{toc}{section}{Abstract}
\vspace*{6pt}
\section*{Introduction}\label{sec:introduction}
\addcontentsline{toc}{section}{Introduction}
The present paper deals with several questions concerning irreducible holomorphic symplectic manifolds and their automorphisms. All known examples of \issp manifolds arise from symplectic surfaces, often as moduli spaces of sheaves on these surfaces. The easiest such example is the Hilbert scheme of $n$ points on a $K3$ surface, constructed by Beauville in \cite{beau3}. This kind of construction allows to produce several examples of automorphisms on \issp manifolds, simply by taking a $K3$ surface with non-trivial automorphism group and considering the induced action on its Hilbert scheme. These kinds of automorphisms are called \emph{natural} and were studied by Beauville \cite{Beau83b}, Boissi\`{e}re \cite{Boi12} and many others. Very few examples of non-natural automorphisms are known, such as those constructed in \cite{OW13}, and a numerical criterion to distinguish between natural and non-natural automorphisms is available only in special cases, see \cite{BS12} and \cite{Mon13}.

The main purpose of this paper is to provide a generalisation of the notion of natural automorphisms. This notion appeared the first time for moduli spaces of sheaves in the paper \cite{OW13}, a work inspired by the beautiful construction in \cite[Sect.\ 5]{OS11}. We extend the ideas drastically using recent developments in the theory of stability conditions by Bridgeland \cite{Bri08} by Bayer and Macr\`{i} (\cite{BM12} and \cite{BM13}) and Yoshioka \cite{Yos12}. This new notion of \emph{induced} automorphisms includes all automorphisms on moduli spaces of stable objects, where the action is induced from an automorphism of the underlying surface. We provide a numerical criterion for the recognition of such examples and we apply this general framework to construct several new examples of induced, yet non-natural automorphisms.
In particular, we prove the following:
\begin{thm*}
Let $X$ be a manifold of \kntipo, and let $G\subset \aut(X)$ be a group of non-symplectic automorphisms. Assume that the action of $G$ fixes a copy of $U$ inside $H^2(X)$, then there exists a $K3$ surface $S$ such that $G\subset \aut(S)$, $X$ is a moduli space of stable objects on $S$ and the action of $G$ on $X$ is induced by that on $S$. 
\end{thm*}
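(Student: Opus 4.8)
The plan is to use Markman's extended Mukai lattice $\tilde\Lambda(X)$ to reconstruct a $K3$ surface $S$, to realise $X$ as a moduli space of Bridgeland-stable objects on $S$, and then to descend the $G$-action first to $D^{b}(S)$ and finally to $S$. Recall that $\tilde\Lambda(X)$ is a rank-$24$ lattice of Mukai type, carrying a $K3$-type weight-two Hodge structure, which contains a distinguished primitive class $v$ with $v^{2}=2n-2$ and $v^{\perp}=H^{2}(X,\Z)$, and on which $\aut(X)$ acts (through the monodromy representation) by Hodge isometries fixing $v$; in particular $G$ acts on $\tilde\Lambda(X)$ fixing $v$. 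Since $G$ is non-symplectic, $\big(H^{2,0}(X)\big)^{G}=0$, so the invariant lattice $\tilde\Lambda(X)^{G}$ consists of algebraic classes; in particular the fixed copy of $U$ is algebraic, $q$ is positive on it, and $X$ is projective.

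Next I would pick a primitive isotropic vector $v_{0}$ inside the fixed $U$ — it is algebraic and $G$-fixed — so that $v_{0}^{\perp}/v_{0}$ is a weight-two Hodge structure of $K3$ type on the $K3$ lattice. By surjectivity of the period map for $K3$ surfaces there is a projective $K3$ surface $S$ and a Hodge isometry $\tilde H(S,\Z)\cong\tilde\Lambda(X)$ sending the point class $(0,0,1)$ to $v_{0}$; under it $v$ becomes a primitive algebraic class on $S$ with $v^{2}=2n-2$. By the results of Yoshioka and of Bayer--Macr\`i, for a generic stability condition $\sigma$ the moduli space $M_{\sigma}(S,v)$ is a projective manifold of \kntiposp with $H^{2}(M_{\sigma}(S,v),\Z)\cong v^{\perp}=H^{2}(X,\Z)$ as Hodge structures, compatibly with the extended Mukai lattices, hence by a parallel-transport operator. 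The Hodge-theoretic global Torelli theorem for \issp manifolds then gives a birational map $X\dashrightarrow M_{\sigma}(S,v)$, and moving $\sigma$ within $\mathrm{Stab}(S)$ into the chamber associated to $X$ (wall-and-chamber structure of Bayer--Macr\`i) promotes it to an isomorphism $X\cong M_{\sigma}(S,v)$, which is automatically $G$-equivariant by the uniqueness in the Torelli theorem, since $G$ acts on both sides through one and the same isometry of $v^{\perp}$.

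Finally I would descend the action. The $G$-action on $\tilde H(S,\Z)$ fixes $v$, hence also the point class $v_{0}=(0,0,1)$; its restriction to $v^{\perp}$ is a parallel-transport operator, so it is orientation preserving on positive three-spaces, and since $v$ is fixed the action on $\tilde H(S,\Z)$ preserves the orientation of positive four-spaces. By the derived global Torelli theorem for $K3$ surfaces each $g\in G$ is therefore realised by an autoequivalence $\Phi_{g}$ of $D^{b}(S)$; because $\Phi_{g}$ fixes the class of a point, after absorbing a line-bundle twist it maps skyscrapers to skyscrapers, hence equals, up to a shift, the push-forward by an automorphism $g_{S}$ of $S$. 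The assignment $g\mapsto g_{S}$ is a group homomorphism, and it is injective: if $g_{S}=\mathrm{id}$ then $\Phi_{g}$ acts on $\tilde H(S,\Z)$ through a line-bundle twist and a shift, which can fix $v$ (recall $v^{2}\neq 0$) only if $g$ acts trivially on $H^{2}(X)$, contradicting non-symplecticity. Thus $G\subseteq\aut(S)$, and the autoequivalences $\Phi_{g}$ induce on $M_{\sigma}(S,v)\cong X$ precisely the natural action of $\aut(S)$ on the moduli space, i.e.\ the $G$-action on $X$ is induced by that on $S$.

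The main obstacle is to keep the entire chain of choices compatible with $G$ at once — the isometry $\tilde H(S,\Z)\cong\tilde\Lambda(X)$ and, above all, the stability condition $\sigma$ in a $G$-stable chamber, so that the merely birational identification furnished by Torelli is upgraded to a $G$-equivariant isomorphism — together with the verification that the autoequivalences realising $G$ are genuine (not only derived) symmetries of $S$; this last point is exactly where the non-symplectic hypothesis is indispensable.
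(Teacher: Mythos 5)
Your strategy coincides with the paper's: extend the $G$-action to the rank-$24$ Mukai lattice fixing the Mukai vector $v$, use the fixed hyperbolic plane and the surjectivity of the period map to produce a projective $K3$ surface $S$, realise $X$ as a moduli space $M_\tau(v)$ via Bayer--Macr\`i together with Markman's Torelli theorem, and identify the two $G$-actions using the faithfulness of $\aut(X)\rightarrow O(H^2(X))$. One hypothesis you use silently: the claim that $\aut(X)$ acts on the extended Mukai lattice \emph{fixing} $v$ requires $G$ to act trivially on $A_X$ (an isometry acting by $-1$ on the discriminant extends only by sending $v$ to $-v$); this is exactly the first condition in the paper's precise formulation (\Ref{defn}{k3n_induced}) and should be stated.

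The genuine gap is in the descent step. The implication ``$\Phi_g$ fixes the class of a point, hence after absorbing a line-bundle twist it maps skyscrapers to skyscrapers'' is false in general: there are autoequivalences (for instance squares of spherical twists) acting trivially on the entire Mukai lattice which are not of the form $(\text{shift})\circ(L\otimes-)\circ f_*$. What is actually needed is that the Hodge isometry of $H^2(S)\cong v_0^\perp/v_0$ induced by each $g$ maps the K\"ahler cone of $S$ to itself; only then does the classical strong Torelli theorem for $K3$ surfaces produce $g_S\in\aut(S)$, and the detour through $D^b(S)$ becomes unnecessary. This effectivity is precisely what the paper arranges by reducing to the case $T_G(X)=\NS(X)$: then $\NS(S)$ is contained in the invariant lattice, so $G$ acts trivially on it, the ample cone is preserved, and moreover every stability condition is automatically $G$-invariant --- which also disposes of the ``main obstacle'' you flag at the end. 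Without an argument of this kind you only obtain $G$ inside the group of Hodge isometries of $S$, not inside $\aut(S)$.
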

See \Ref{thm}{k3n_induced} for the proof and a more precise statement.
The same technique can be applied also to symplectic automorphisms, but in this case we obtain a more general result:
\begin{thm*}
The moduli space of pairs $(X,G)$ with $X$ \is, $G\subset \aut(X)$ symplectic and $G\subset O(H^2(X))$ fixed up to conjugation has at most the same number of connected components as the moduli space of marked pairs $(X,f)$.
\end{thm*}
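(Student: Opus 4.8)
The plan is to compare both moduli spaces with the monodromy group of the fixed deformation type and to reduce the statement to a counting of double cosets. Fix the deformation type of $X$, let $\Lambda$ be the corresponding abstract lattice, $\Omega_\Lambda\subseteq\PP(\Lambda\otimes\C)$ the period domain, $\mathcal P\colon\mathcal M\to\Omega_\Lambda$ the period map from the moduli space $\mathcal M$ of marked pairs, and $\mathrm{Mon}^2(\Lambda)\subseteq O(\Lambda)$ the monodromy group. Since all \issp manifolds of a fixed type are deformation equivalent, $O(\Lambda)$ acts transitively on $\pi_0(\mathcal M)$ by change of marking with stabiliser $\mathrm{Mon}^2(\Lambda)$, so $\pi_0(\mathcal M)\cong O(\Lambda)/\mathrm{Mon}^2(\Lambda)$. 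Now let $(X,G)$ be a pair with $G\subset\aut(X)$ a (finite) group of symplectic automorphisms. A marking $f$ carries $G$ to a subgroup $\widetilde G:=fGf^{-1}\subseteq\mathrm{Mon}^2(\Lambda)$, and since $G$ is symplectic $\widetilde G$ fixes the period of $(X,f)$; hence the invariant sublattice $\Lambda^{\widetilde G}$ has signature $(3,\rho)$, the period lies in the sub-period domain $\Omega_{\Lambda^{\widetilde G}}:=\Omega_\Lambda\cap\PP(\Lambda^{\widetilde G}\otimes\C)$, and this domain is again \emph{connected} (the form still has three positive squares). Fixing the conjugacy class of $G$ inside $O(H^2(X))$ is the same as fixing the $O(\Lambda)$-conjugacy class of $\widetilde G$; fix once and for all a representative $\widetilde G$, write $N:=N_{O(\Lambda)}(\widetilde G)$, and let $\mathcal N$ denote the moduli space of pairs in the statement.

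First I would construct a map $\Phi\colon\pi_0(\mathcal N)\to N\backslash O(\Lambda)/\mathrm{Mon}^2(\Lambda)$. Given a pair $(X,G)$ representing a component of $\mathcal N$, choose a marking $f$ with $fGf^{-1}=\widetilde G$ (possible after conjugating an arbitrary marking by an element of $O(\Lambda)$); then $(X,f)$ lies in a well-defined component of $\mathcal M$, i.e.\ a well-defined class in $O(\Lambda)/\mathrm{Mon}^2(\Lambda)$, and passing to the $N$-orbit kills the ambiguity in $f$ (two such markings differ by left multiplication by an element of $N$), while small deformations of the pair keep this double coset constant (the induced $\widetilde G$-action is flat along the deformation). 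Thus $\Phi$ is well defined, and as a double coset is a union of left cosets we get
\[
|\pi_0(\mathcal N)|\ \le\ \bigl|N\backslash O(\Lambda)/\mathrm{Mon}^2(\Lambda)\bigr|\ \le\ \bigl|O(\Lambda)/\mathrm{Mon}^2(\Lambda)\bigr|\ =\ |\pi_0(\mathcal M)|,
\]
provided $\Phi$ is injective. Proving injectivity of $\Phi$ is the heart of the matter.

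For injectivity, suppose $(X_1,G_1)$ and $(X_2,G_2)$ have $\Phi$-images in the same double coset. After replacing $f_1$ by $nf_1$ for a suitable $n\in N$ we may assume the marked pairs $(X_1,f_1)$ and $(X_2,f_2)$ lie in the \emph{same} connected component $\mathcal M^\circ$ of $\mathcal M$; it then suffices to connect them inside the locus $\mathcal M_{\widetilde G}\cap\mathcal M^\circ\subseteq\mathcal M^\circ$ of marked pairs carrying a \emph{biregular} $\widetilde G$-action and push down. The key point is that $\mathcal M_{\widetilde G}\cap\mathcal M^\circ$ is connected. Indeed, $\mathcal M^\circ\cap\mathcal P^{-1}(\Omega_{\Lambda^{\widetilde G}})$ is connected, because the restricted period map to the connected domain $\Omega_{\Lambda^{\widetilde G}}$ is surjective (a restricted version of Huybrechts' surjectivity, obtained by deforming along families of $\widetilde G$-invariant Hodge structures) and injective on each component over a dense open subset of $\Omega_{\Lambda^{\widetilde G}}$; moreover, off a countable union of proper closed analytic subsets one has $\NS(X)=S_{\widetilde G}$ with $S_{\widetilde G}:=(\Lambda^{\widetilde G})^\perp$ negative definite, and this lattice carries \emph{no wall divisors}---otherwise, deforming the given pair to such a generic member, a $\widetilde G$-invariant K\"ahler class (obtained by averaging) would pair non-trivially with a wall divisor lying in $S_{\widetilde G}$, which is absurd---so the generic such $X$ has K\"ahler cone equal to the whole positive cone and the $\widetilde G$-action, fixing the $\widetilde G$-invariant positive two-plane hence a K\"ahler class, is biregular by the Global Torelli theorem. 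Therefore $\mathcal M_{\widetilde G}\cap\mathcal M^\circ$ is an open subset of the connected set $\mathcal M^\circ\cap\mathcal P^{-1}(\Omega_{\Lambda^{\widetilde G}})$ whose complement has real codimension at least $2$, hence is itself connected; connecting the (after an initial deformation, generic) pairs inside it finishes the argument.

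I expect the injectivity step to be the main obstacle, and within it the two technical points: the restricted surjectivity of the period map onto $\Omega_{\Lambda^{\widetilde G}}$ inside a prescribed component $\mathcal M^\circ$, and the fact that the co-invariant lattice of a symplectic group contains no wall divisors (equivalently, that the given pair deforms, keeping a biregular action, to one with $\NS=S_{\widetilde G}$), which is exactly what prevents the biregular locus from breaking into spurious components. The well-definedness of $\Phi$ and the double-coset counting are formal.
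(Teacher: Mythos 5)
Your argument is, at its core, the same as the paper's: both reduce the theorem to showing that two pairs whose markings are related by a parallel transport operator compatible with the $G$-actions can be joined by a deformation through pairs, and both achieve this by moving inside the locus of $\widetilde G$-invariant periods $\Omega_{\Lambda^{\widetilde G}}$ (connected because $T_G$ has signature $(3,\mathrm{rank}-3)$) to a very general member where $\NS=S_{\widetilde G}$, $S_{\widetilde G}$ carries no wall divisors (your averaging argument for this is the standard one and is correct), and the K\"ahler cone equals the positive cone. The double-coset bookkeeping $N\backslash O(\Lambda)/\mathrm{Mon}^2(\Lambda)$ is a clean repackaging of the counting and even yields a marginally sharper bound. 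Two caveats. First, the step you explicitly defer --- that the period map restricted to $\mathcal{P}^{-1}(\Omega_{\Lambda^{\widetilde G}})\cap\mathcal{M}^\circ$ is surjective, equivalently that any two such marked pairs can be joined while keeping the biregular $\widetilde G$-action --- is precisely where the paper does its real work: it connects points of $\Omega_{T_G}$ by twistor lines and lifts these to twistor families associated to $G$-invariant K\"ahler classes, along which the action extends fibrewise; your phrase ``deforming along families of $\widetilde G$-invariant Hodge structures'' is the right idea but, as written, your connectedness claim for the biregular locus rests on an assertion rather than a proof. Second, your argument only controls the image of $G$ in $O(H^2)$; to conclude that the deformed action coincides with the given one as a subgroup of $\aut(X)$ you need the hypothesis, present in the body of the paper but dropped from the abstract statement, that $G$ contains all automorphisms acting trivially on $H^2$ (this kernel is a deformation invariant by Hassett--Tschinkel, which is what makes the final identification work).
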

See \Ref{thm}{sympl_def} for details.

We mainly apply the new technique to automorphisms which do not preserve the symplectic form (\ie non-symplectic automorphisms): In the recent work of Boissi\`{e}re, Camere and Sarti (\cite{BCS14}) a lattice-theoretic classification of non-symplectic automorphisms of prime order (different from $5$) is given in the case of manifolds of \ktipo. The case of involutions is of special interest since it provides a great number of automorphisms of \issp manifolds, where, up to now, no geometric realisation is known. Using the notion of induced automorphisms we can realise almost all the unknown examples as automorphisms on moduli spaces of sheaves. Some examples are provided using degenerations of double EPW-sextics. We therefore have the following:
\begin{cor*}
Aside from $3$ cases, there is a geometric realisation for every family of non-symplectic prime order ($\neq5$, $\leq 19$) automorphisms on manifolds of \ktipo. Almost all families correspond to induced automorphisms. 
\end{cor*}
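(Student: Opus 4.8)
The plan is to feed the lattice-theoretic classification of \cite{BCS14} into the criterion \Ref{thm}{k3n_induced}. Recall that \cite{BCS14} attaches to each non-symplectic automorphism of prime order $p$ (with $p\neq 5$; the bound $p\leq 19$ being forced by the action on $H^2$ of a manifold of \ktipo) a pair of lattices, namely the invariant lattice $T=H^2(X,\Z)^G$ and its orthogonal complement, together with the discriminant data that pin the family down up to deformation. The first step is to run through this finite list and single out the families for which $T$ contains a copy of the hyperbolic plane $U$. For every such family, \Ref{thm}{k3n_induced} applies verbatim: there is a $K3$ surface $S$ carrying a group $G$ of automorphisms of the same prime order, the manifold $X$ is realised as a moduli space of stable objects on $S$, and $G$ acts on $X$ by the induced action. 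It then remains to check that the induced automorphism so produced lands in the prescribed family, \ie that its non-symplectic character, its order, and the numerical invariants of its fixed locus agree with the entry of the list; this is a compatibility verification carried out using the explicit action on the Mukai lattice $\widetilde{\Lambda}=U^{\oplus 4}\oplus E_8(-1)^{\oplus 2}$, and, via a Torelli-type deformation argument, it suffices to exhibit a single such pair to cover the whole lattice-theoretic family.

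For the remaining entries, those for which no copy of $U$ embeds in $T$, the criterion cannot be used directly and one has to produce geometric models by other means. Here the strategy is to match, one family at a time, the explicit geometric constructions of \ktiposp manifolds with prime order automorphisms — first and foremost the antisymplectic involutions on double EPW-sextics and certain of their degenerations, but also automorphisms on suitable moduli spaces of sheaves and on other classical models — against the entries of the \cite{BCS14} list, computing in each case the invariant lattice and the fixed-locus data and checking that they reproduce the expected numerical invariants. Exhausting the list in this way leaves exactly three families for which no geometric realisation is obtained; these are the stated exceptions.

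The main obstacle will be precisely this last case-by-case bookkeeping: the families with $U\not\hookrightarrow T$ are sporadic, and for a handful of them the only natural candidate is a degeneration of double EPW-sextics, whose automorphism group and whose invariant lattice must be controlled through the degeneration — the delicate point being to ensure that the limiting object is still a smooth manifold of \ktipo (or to replace it by its symplectic resolution, respectively by a nearby moduli space) and that the specialised automorphism retains the required order and character. Once this is carried out, one reads off that among the families admitting a geometric model, only those obtained via double EPW-sextics and their degenerations fail to be induced by an automorphism of a $K3$ surface, while every family with $U\subset T$ is induced by construction; hence almost all of them are induced, as claimed.
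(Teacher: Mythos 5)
Your overall strategy --- run the \cite{BCS14} list through the induced-automorphism criterion and supply the leftovers with explicit geometric models, chiefly degenerations of double EPW-sextics --- is the same as the paper's. But your filtering step is set up with the wrong lattice, and this matters. The numerical criterion of \Ref{thm}{k3n_induced} is not ``$U$ embeds in $T=T_G(X)$'' but ``$U$ is a direct summand of the $(1,1)$-part of $T_G(\Lambda_{24})$'', the invariant lattice of the \emph{extended} action on the Mukai lattice, which has rank $\rk T+1$ (triviality of the action on $A_X$ is automatic for \ktiposp since $A_X\cong\Z/2\Z$). This is a strictly weaker condition: for a natural automorphism of odd prime order the copy of $U$ sits in $H^0\oplus H^4$, so $T_G(\Lambda_{24})\cong T_G(S)\oplus U$ contains $U$ even when $T_G(X)=T_G(S)\oplus\langle-2\rangle$ does not. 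With your filter most of the odd-prime families would not be recognised as induced, and your conclusion ``every family with $U\subset T$ is induced, hence almost all are induced'' does not follow. The paper instead handles $p=3,7,11,13,17,19$ by observing that \cite{BCS14} realise each lattice by a natural (hence induced) automorphism, and that any other deformation family with the same $H^2$-action is a global flop, which by \cite{BM13} is again a moduli space of stable objects carrying an induced automorphism; the computation in $\Lambda_{24}$ is used only to rule out inducedness for the two exceptional $p=3$ lattices $\langle6\rangle$ and $\langle6\rangle\oplus E_6^\vee(-3)$, where $T_g(\Lambda_{24})\cong A_2$, resp.\ $A_2\oplus E_6^\vee(-3)$, contains no hyperbolic plane.

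This leads to the second problem: your closing assertion that only the EPW-sextic families fail to be induced is wrong, since those two order-$3$ families are realised on Fano varieties of lines of cubic fourfolds and are proved \emph{not} to be induced. Finally, the three exceptions should be identified concretely: they are the involution families with invariant lattice $U(2)$, $U(2)\oplus E_8(2)$ and $U(2)\oplus D_4$, i.e.\ exactly the $2$-elementary cases with $\delta_T=0$ that neither become induced after extension to $\Lambda_{24}$ nor arise from Ferretti's degenerations --- the latter produce precisely the $a=r$, $\delta_T=1$ families with invariants $(k+1,k+1,1)$, $k\le 10$, and the smoothness and persistence of the involution through the degeneration that you flag as the delicate point is already packaged in \Ref{prop}{fer}, so no separate limiting argument is needed.
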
 

A few more results concerning automorphisms are included, in particular we have the following for lagrangian fibrations with a section.
\begin{prop*}
Let $X$ be a manifold of \kntiposp having a lagrangian fibration with a section. Then $\Pic(X)$ admits a primitive embedding of $U(2)$ if $n$ is odd, and of $(2)\oplus (-2)$ otherwise.
\end{prop*}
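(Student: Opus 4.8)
The plan is to realise $X$, up to birational equivalence, as a Beauville--Mukai integrable system on a $K3$ surface and to reduce the statement to an arithmetic computation in a Mukai lattice. A lagrangian fibration $\pi\colon X\to\PP^n$ provides a primitive nef class $F=\pi^*\mathcal O_{\PP^n}(1)$ with $q(F)=0$; by the (now known) \hk SYZ picture for \kntiposp manifolds, due to Bayer--Macr\`{i}, Markman and Matsushita, together with the birational invariance of $\Pic$ for irreducible symplectic manifolds, I may assume $X=M_S(0,D,\chi)$ with $S$ a projective $K3$, $D\in\NS(S)$ primitive with $D^2=2n-2$, $|D|\cong\PP^n$, $\pi$ the support morphism, and $F$ corresponding to $(0,0,1)$ (up to sign) under the Mukai--Hodge isometry $\theta\colon v^\perp\xrightarrow{\ \sim\ }H^2(X,\Z)$ attached to $v=(0,D,\chi)$. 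Then $\Pic(X)=\theta\bigl(v^\perp\cap\widetilde{\NS}(S)\bigr)$ with $\widetilde{\NS}(S)=\Z\oplus\NS(S)\oplus\Z$, so everything becomes a lattice computation.

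Next I translate the hypothesis that $\pi$ has a section. On the universal curve $\mathcal C\to|D|$ a section of $\pi$ is a relatively defined torsion-free rank-one sheaf whose restriction to a fibre $C\in|D|$ has degree $\chi+n-1$; by Grothendieck--Lefschetz $\Pic(\mathcal C)=\Pic(S)\oplus\Pic(|D|)$ (here $n\ge 2$), so the fibrewise degrees realised by $\Pic(\mathcal C)$ form the subgroup $D\cdot\NS(S)=t\,\Z$, where $t=\div_{\NS(S)}(D)$. Hence the existence of a section forces $t\mid\chi+n-1$; combined with $t\mid D^2=2(n-1)$ this gives $t\mid 2\chi$, so $-2\chi\in D\cdot\NS(S)$ and there is a class $c\in\NS(S)$ with $D\cdot c=-2\chi$. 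This pair of divisibilities is precisely what is violated by the ``wrong'' rank-two Picard lattice: an $X$ with $\Pic(X)\cong U$ corresponds (as a Beauville--Mukai model) to the case $t\nmid\chi+n-1$, hence admits no section, and is thereby excluded.

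Now I build $M$ inside $v^\perp\cap\widetilde{\NS}(S)$. Put $F_M=(0,0,1)$ and $D_M=(-2,c,s)$ with $c$ as above and $s\in\Z$; both lie in $v^\perp$, and the Mukai pairing gives $\langle F_M,F_M\rangle=0$, $\langle F_M,D_M\rangle=2$, $\langle D_M,D_M\rangle=c^2+4s$. Choosing $s$ we may assume $c^2+4s\in\{0,2\}$; since $c^2+4s\equiv c^2\pmod 4$, the sublattice $\langle F_M,D_M\rangle$ has Gram matrix $\left(\begin{smallmatrix}0&2\\2&0\end{smallmatrix}\right)\cong U(2)$ when $c^2\equiv0\ (4)$ and $\left(\begin{smallmatrix}0&2\\2&2\end{smallmatrix}\right)\cong(2)\oplus(-2)$ when $c^2\equiv2\ (4)$. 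It remains to arrange $c^2\equiv 2(n-1)\pmod4$, which then yields $U(2)$ for $n$ odd and $(2)\oplus(-2)$ for $n$ even. When $\NS(S)=\Z D$ this is automatic: the section condition pins $\chi$ modulo $2n-2$, forcing $c$ to be an odd multiple of $D$, whence $c^2\equiv D^2=2(n-1)\pmod 4$ and in fact $\Pic(X)=\langle F_M,D_M\rangle\cong M$. For larger $\NS(S)$ I reduce to this case by deforming $(X,\pi)$ together with its section -- the fibration persists by SYZ and the section, being a section of a family of abelian varieties over the fixed base $\PP^n$, deforms -- to the member of minimal Picard rank, which is a Beauville--Mukai system with $\NS(S)=\Z D$ and Picard lattice exactly $M$; hence $M\subseteq\Pic(X)$. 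Finally, $M$ (either lattice) admits a primitive embedding into $L_n:=H^2(X,\Z)$, e.g. as a primitive sublattice of the orthogonal summand $U\oplus\langle-2(n-1)\rangle\subset L_n$ (verified by Nikulin's discriminant-form criterion); since $M\subseteq\Pic(X)\subseteq L_n$ with $M$ primitive in $L_n$, it is a fortiori primitive in $\Pic(X)$.

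The main obstacle is the middle of the argument: extracting from a bare section the two divisibilities $t\mid\chi+n-1$ and $t\mid 2(n-1)$, deducing both the auxiliary class $c$ and the correct residue of $c^2$ modulo $4$ -- that is, showing a section genuinely manufactures $M$ and not merely a copy of $U$ -- together with the deformation step needed to descend from the minimal case $\NS(S)=\Z D$ to an arbitrary $X$ with a lagrangian fibration admitting a section.
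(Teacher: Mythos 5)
Your route is genuinely different from the paper's. The paper never passes to a Beauville--Mukai model: it works with the birational involution given by inversion on the smooth fibres, observes that $\langle D,T\rangle$ is its invariant lattice, extends the involution to the unimodular lattice $\Lambda_{24}$ so that the invariant lattice there is $2$-elementary, and pins down $\div(T)=2$ and $T^2\bmod 4$ using the classification of numerical classes of extremal rays from \cite{Mon13b}. You instead invoke the SYZ picture to write $X=M_S(0,D,\chi)$ and reduce everything to arithmetic of the Mukai vector; your final Gram-matrix computation is correct and reproduces the paper's answer, but it rests on two steps that have genuine gaps (besides requiring the full Markman--Bayer--Macr\`i--Matsushita machinery, including triviality of the Tate--Shafarevich twist in the presence of a section, which is a far heavier input than anything the paper uses).

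First, the pivotal implication ``section $\Rightarrow t\mid\chi+n-1$'' is not established by your Grothendieck--Lefschetz argument. A section of the coarse moduli space $M_S(0,D,\chi)\to|D|$ is not a line bundle on the universal curve $\mathcal C$: over singular members of $|D|$ it is only a rank-one torsion-free sheaf, and globally the family exists only as a quasi-universal (twisted) family since the moduli problem need not be fine; so computing $\Pic(\mathcal C)$ does not bound the fibrewise degrees actually realised by the section. This divisibility is precisely the content of the paper's claim $\div(T)=2$, and the paper has to work for it; you cannot get it for free. Second, the descent from the minimal case $\NS(S)=\Z D$ to general $X$ is unjustified: sections of lagrangian fibrations do not automatically deform, and without that you lose control of which class $c$ with $D\cdot c=-2\chi$ is realised by the actual section. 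Since $c$ is determined only modulo $\{k\in\NS(S):D\cdot k=0\}$ and different choices change $c^2\bmod 4$, the argument as written does not single out $U(2)$ versus $\langle2\rangle\oplus\langle-2\rangle$ once $\NS(S)$ has rank greater than one. A cleaner fix is to identify the preimage of the actual section class $T$ under the Mukai--Hodge isometry, or to argue directly on $X$ as the paper does via the involution and its $2$-elementary invariant lattice.
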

See \Ref{prop}{lagr_lattice} for the proof and the following statements for analogous results on other \issp manifolds. In particular we can exploit the above to prove that there exist lagrangian fibrations which can not be deformed to fibrations having a section.\\

The structure of the paper is as follows: In \Ref{sec}{prel}, we gather all preliminaries concerning lattice theory, \issp manifolds and stability conditions. In \Ref{sec}{periods_mod_spac} we give a numerical criterion to recognise a moduli space of stable objects on a $K3$ (which was also recently proved in \cite{Add14}) or abelian surface. In \Ref{sec}{induced_auto_group} we provide the general framework to produce and distinguish groups of induced automorphisms on manifolds of \kntipo. A conjectural setting for generalised Kummer manifolds is provided, which is proven to hold in the case that $n+1$ is a prime power. The case of symplectic automorphisms is treated in its full generality. In \Ref{sec}{kieran_case}, we discuss the notion of induced automorphisms on the two manifolds introduced by O'Grady. Finally, in \Ref{sec}{appl} we apply the theoretical construction to the recent work of Boissi\`{e}re, Camere and Sarti \cite{BCS14} to explicitly construct all non-symplectic automorphisms of prime order (different from $5$) on manifolds of \ktipo, apart from three cases. Moreover, we analyse lagrangian fibrations with a section together with the involution which is naturally induced by this section. 

In this article all \issp manifolds are assumed to be projective, unless stated otherwise.

\section*{Acknowledgements}
\addcontentsline{toc}{section}{Acknowledgements}
We are grateful to Samuel Boissi\`{e}re, Chiara Camere and Alessandra Sarti for their comments and for letting us know about their work \cite{BCS14}. We would also like to thank Paolo Stellari for telling us about \cite{Yos12}.
The first named author would like to thank Matthias Sch\"{u}tt and the Institute for Algebraic Geometry of the University of Hannover for their kind hospitality and for providing a nice working environment where this work was started. Moreover he would also like to thank Arvid Perego and Antonio Rapagnetta for useful discussions.  
The second named author wants to thank Gilberto Bini for his kind hospitality, the latter and Matthias Sch\"{u}tt and the Vigoni exchange program for supporting his visit to Milano.
Finally, we are both grateful to the Max Planck Institut f\"ur Mathematik Bonn, for having partially supported the first named author and hosted the second named author.  

\section{Preliminaries}\label{sec:prel}
\subsection{Lattice theory}
Let $L$ be an even lattice, the group $A_L=L^\vee/L$ is called the \em discriminant group \em and the quadratic form of $L$ induces a form $q_{A_L}$ with values in $\mathbb{Q}/2\mathbb{Z}$. The \em length of $A_L$ \em is denoted by $l(A_L)$. If $X$ is an \issp manifold, we denote by $A_X$ the discriminant group of the lattice $H^2(X,\mathbb{Z})$. An \em overlattice \em of $L$ is any lattice $M\supset L$ such that $M/L$ is torsion. An embedding $L\hookrightarrow M$ is \em primitive \em if the quotient $M/L$ has no torsion. The \em divisor of $v\in L$, \em denoted $div(v)$, is the positive integer $n$ such that $(v,L)=n\mathbb{Z}$.

If $L$ is a lattice, we denote with $L(n)$ a lattice with the same structure as $\mathbb{Z}$ module but with quadratic form multiplied by $n$. We denote with $A_n$, $D_n$ and $E_n$ the positive definite lattices associated to the corresponding Dynkyn diagrams. Let $n$ be an integer, then we denote with $\langle n\rangle$ a rank 1 lattice generated by an element of square $n$ and with $\Lambda_{4n}$ the unique even unimodular lattice of signature $(4,4n-4)$.
\begin{oss}\label{oss:overlattice}
Overlattices of $L$ are in bijective correspondence with isotropic subgroups $H$ of $A_L$. Moreover the discriminant group of an overlattice given by $H$ is $H^\perp/H$
\end{oss}
\begin{lem}\cite[Lemma 3.5]{GHS10}
Let $L'$ be a lattice and let $L=U^2\oplus L'$. Let $v,w\in L$ be two elements such that the following holds:
\begin{itemize}\renewcommand{\labelitemi}{$\bullet$}
\item $v^2=w^2$.
\item $[v/div(v)]=[w/div(w)]$ in $A_L$.
\end{itemize}
Then there exists an isometry $g$ of $L$ such that $g(v)=g(w)$.
\end{lem}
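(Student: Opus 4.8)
This is the classical criterion of Eichler, which I would prove via Eichler--Siegel transvections. (Note that $v$ and $w$ must be taken primitive, as in the source \cite{GHS10}: without this the statement fails --- e.g.\ $2e$ and $3e$ for a primitive isotropic $e$ have the same invariants but are not $O(L)$-equivalent.) Write $U^2 = U_1\oplus U_2$ with $U_i = \langle e_i, f_i\rangle$, $(e_i,f_i) = 1$, $e_i^2 = f_i^2 = 0$. For isotropic $e \in L$ and $a \in e^\perp$ set
\[
E_{e,a}(x) \;=\; x + (a,x)\,e - (e,x)\,a - \tfrac12 (a,a)(e,x)\,e .
\]
A direct computation shows $E_{e,a}\in O(L)$: it fixes $e$, acts as the identity on $e^\perp/\langle e\rangle$, and preserves $(x,x)$ for every $x\in L$. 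Let $\Gamma \le O(L)$ be the subgroup generated by all $E_{e_i,a}$ and $E_{f_i,a}$ ($i = 1,2$, with $a$ in the relevant orthogonal complement). Since $v^2 = w^2 =: t$ and $v/\div(v)+L = w/\div(w)+L =: \xi$, it suffices to prove that $\Gamma$ acts transitively on
\[
\mathcal V(t,\xi) \;=\; \{\, u\in L \text{ primitive} : u^2 = t,\ u/\div(u)+L = \xi \,\};
\]
once this transitivity is established, any $g\in\Gamma$ with $g(v) = w$ proves the lemma.

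The plan is to carry an arbitrary $u\in\mathcal V(t,\xi)$ to a normal form depending only on $(t,\xi)$. Write $u = \alpha_1 e_1 + \beta_1 f_1 + \alpha_2 e_2 + \beta_2 f_2 + u'$ with $u'\in L'$, and put $d = \div(u)$. The transvections $E_{e_i,a}$, $E_{f_i,a}$ with $a\in L'$ replace $u'$ by $u' + m\,a$ with $m\in\{\alpha_1,\beta_1,\alpha_2,\beta_2\}$ while perturbing only the companion $U_i$-coordinate, and those with $a \in U_1\oplus U_2$ act on $(\alpha_1,\beta_1,\alpha_2,\beta_2)$ by the elementary moves generating the (large) Eichler subgroup of $O(U^2)$. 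Combining a Euclidean-algorithm reduction of these coordinates with the primitivity of $u$ and the presence of \emph{two} hyperbolic planes, I would bring $u$ to
\[
u \;\longmapsto\; u_1 \;=\; d\,e_1 + k\,f_1 + c, \qquad k = \frac{t - c^2}{2d},
\]
where $c\in L'$ is a fixed representative of $\xi$ with $c/d\in(L')^\vee$. Here $k\in\Z$ and even $d\mid k$, because $q_{A_L}(\xi) = (u/d)^2 = (c/d)^2$ in $\Q/2\Z$ forces $t - c^2 = u^2 - c^2 \equiv 0 \pmod{2d^2}$; and one checks $\div(u_1) = \gcd(k,d,\div_{L'}(c)) = d$, so $u_1\in\mathcal V(t,\xi)$ as it must be.

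It remains to see that this normal form does not depend on the chosen representative $c$: if $c,c'$ both represent $\xi$ then $\tfrac1d(c-c')\in L'$, and I would exhibit a further short product of transvections --- using $U_2$ to absorb $\tfrac1d(c-c')$ and to correct the $f_1$-coordinate --- carrying $d e_1 + k f_1 + c$ to $d e_1 + k' f_1 + c'$. Thus every element of $\mathcal V(t,\xi)$ lies in a single $\Gamma$-orbit, which gives transitivity and hence the lemma. The main obstacle is the second step: organising the Euclidean reduction so that it genuinely terminates at the one-parameter family $d e_1 + k f_1 + c$ while only ever using transvections attached to $U_1$ and $U_2$ and while keeping $\div(u)$ under control throughout --- this is exactly where both the hypothesis of two hyperbolic summands (one does not suffice in general) and the primitivity of $u$ enter; the remaining verifications are routine bookkeeping.
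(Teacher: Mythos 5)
The paper does not actually prove this lemma: it is quoted (with two slips --- the conclusion should read $g(v)=w$ rather than $g(v)=g(w)$, and the hypothesis that $v,w$ be primitive is omitted) from \cite{GHS10}, where it is the Eichler criterion. Your proposal reconstructs exactly the Eichler--Siegel transvection argument that the cited source relies on, so at the level of strategy you are doing the ``same'' proof one reference deeper. Your supporting observations are all correct and worth recording: primitivity is genuinely needed (your $2e$ versus $3e$ example is right); $E_{e,a}$ is an isometry fixing $e$; reducing the lemma to transitivity of $\Gamma$ on $\mathcal{V}(t,\xi)$ is valid; and the divisibility $2d^2\mid t-c^2$ required for the normal form $de_1+kf_1+c$ to exist does follow from $q_{A_L}(\xi)=t/d^2=c^2/d^2$ in $\Q/2\Z$.

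As a proof, however, the text stops exactly where the content begins: the transitivity of $\Gamma$ on $\mathcal{V}(t,\xi)$ \emph{is} the Eichler criterion, and the two steps you defer are not routine. For the reduction to normal form one must first use primitivity and the second hyperbolic plane to arrange that the $U_1$-component of $u$ is nonzero and, after division by $d=\div(u)$, primitive, before the Euclidean moves can terminate at $(d,k)$; this is where both hypotheses enter and it requires a genuine induction, not just bookkeeping. For the independence of the representative $c$, note that transvections attached to $U_2$ act almost trivially on a vector with no $U_2$-component (one computes $E_{e_2,a}(de_1+kf_1+c)=de_1+kf_1+c+(a,c)e_2$), so before you can ``absorb'' $\tfrac1d(c-c')$ you must first manufacture a unit coordinate in $U_2$; this is precisely the delicate point in the argument of \cite{GHS10}. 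In short: right approach, correct peripheral computations, but the core of the argument is an announced plan rather than a proof. Since the paper itself offers no proof, the honest options are to carry out these two reductions in full or to leave the statement as a citation.
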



\begin{cor}
Let $L=\Lambda_{24}$ or $\Lambda_{8}$. Then primitive embeddings of any corank $1$ lattice $M$ are determined, up to isometry, by the square of a generator of $M^\perp$.
\end{cor}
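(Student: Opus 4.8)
The plan is to deduce this directly from the preceding Lemma of \cite{GHS10}, so the first task is to put $L$ in the form $U^2\oplus L'$ to which that Lemma applies. Here I would record that $\Lambda_{24}$ and $\Lambda_8$ are even unimodular of signature $(4,20)$ and $(4,4)$ respectively, hence indefinite with both $p_+\ge 2$ and $p_-\ge 2$; by the structure theorem for indefinite even unimodular lattices each of them splits as $U^2\oplus L'$ for a suitable even lattice $L'$. In particular both are unimodular, so $A_L=0$; this makes the second bullet of the Lemma automatic and is the reason the corollary only involves the square of $v$.

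Next fix a lattice $M$ of corank $1$ in $L$ and a primitive embedding $\iota\colon M\hookrightarrow L$. Since $\iota$ is primitive, $\iota(M)^\perp$ is again a primitive sublattice, of rank $\rk L-\rk M=1$; hence $\iota(M)^\perp=\langle v_\iota\rangle$ for a vector $v_\iota\in L$ which, being a generator of a primitive rank-$1$ sublattice, is primitive in $L$, so $\div(v_\iota)=1$. Conversely $\iota(M)=\langle v_\iota\rangle^\perp$, so the image of the embedding is recovered from the single vector $v_\iota$. Now suppose $\iota_1,\iota_2\colon M\hookrightarrow L$ are two primitive embeddings whose orthogonal complements are generated by $v_1$ and $v_2$ with $v_1^2=v_2^2$. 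The two hypotheses of the Lemma hold: the squares coincide by assumption, and $[v_1/\div(v_1)]=0=[v_2/\div(v_2)]$ trivially since $A_L=0$. Therefore there is $g\in O(L)$ with $g(v_1)=v_2$, whence $g(\langle v_1\rangle)=\langle v_2\rangle$ and $g(\iota_1(M))=g(\langle v_1\rangle^\perp)=\langle v_2\rangle^\perp=\iota_2(M)$; so the two embeddings lie in the same $O(L)$-orbit (and $\iota_2^{-1}\circ g\circ\iota_1\in O(M)$). As an isometry preserves squares, the assignment $\iota\mapsto v_\iota^2$ is a complete invariant of primitive embeddings of $M$ up to isometry, which is the assertion.

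I do not expect a real obstacle: the substance is entirely in the cited Lemma, and the corollary is essentially the case $L'=L$ of it combined with the triviality of $A_L$. The only points needing a line of justification are the two reductions that make the Lemma applicable — that an indefinite even unimodular lattice of the given signature splits off $U^2$, and that its discriminant group vanishes so that the ``same class in $A_L$'' condition is vacuous — together with the elementary observation that the orthogonal complement of a corank-$1$ primitive sublattice is spanned by a primitive vector.
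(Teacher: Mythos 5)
Your proposal is correct and follows exactly the route the paper intends: the corollary is stated as an immediate consequence of the quoted Eichler-type lemma from \cite{GHS10}, using that $\Lambda_{24}$ and $\Lambda_8$ are even unimodular and indefinite, hence split off $U^2$ and have trivial discriminant group, so equality of squares of the complement generators already gives an isometry of $L$ matching the two orthogonal complements and hence the two images $\iota_i(M)=\langle v_i\rangle^\perp$. Your parenthetical remark that the embeddings then agree only after composing with $\iota_2^{-1}\circ g\circ\iota_1\in O(M)$ is precisely the sense of ``up to isometry'' meant here (orbits of the image sublattice), which is consistent with Markman's count of $O(L,\Lambda_{24})/O(\Lambda_{24})$ elsewhere in the paper.
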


\begin{lem}\label{lem:aut_extend}
Let $M$ be a lattice and let $G\subset O(M)$. Let $M\hookrightarrow L$ be a primitive embedding in an unimodular lattice. Let $G$ act trivially on $A_M$. Then $G$ extends to a group of isometries of $L$ acting trivially on $M^\perp$.
\end{lem}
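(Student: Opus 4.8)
The plan is to use the standard theory (à la Nikulin) describing isometries of an even lattice in terms of compatible pairs of isometries on a primitive sublattice and on its orthogonal complement. Let $N = M^\perp$ inside $L$. Since $L$ is unimodular and $M \hookrightarrow L$ is primitive, there is a canonical \emph{glue isomorphism} $\gamma\colon A_M \xrightarrow{\ \sim\ } A_N$ which is an anti-isometry of discriminant quadratic forms (this is the usual consequence of $L/(M\oplus N)$ being the graph of $\gamma$, together with $|A_M|\cdot|A_N| = [L:M\oplus N]^2$ and unimodularity of $L$). A pair $(\varphi,\psi)\in O(M)\times O(N)$ extends to an isometry of $L$ preserving both $M$ and $N$ if and only if the induced maps $\bar\varphi$ on $A_M$ and $\bar\psi$ on $A_N$ are intertwined by $\gamma$, i.e.\ $\bar\psi\circ\gamma = \gamma\circ\bar\varphi$.

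First I would record this criterion precisely: given $g\in O(M)$ with trivial action on $A_M$, take $\psi = \id_N$; then $\bar\psi = \id_{A_N}$, and the compatibility condition reads $\id_{A_N}\circ\gamma = \gamma\circ\bar g = \gamma\circ\id_{A_M}$, which holds by the hypothesis $\bar g = \id$. Hence for every $g\in G$ the pair $(g,\id_N)$ extends to an isometry $\tilde g$ of $L$; by construction $\tilde g$ acts trivially on $N = M^\perp$ and restricts to $g$ on $M$. Next I would check that $g\mapsto \tilde g$ is a group homomorphism: the extension is uniquely determined once we demand $\tilde g|_N = \id_N$ and $\tilde g|_M = g$, because $M\oplus N$ has finite index in $L$ and an isometry of $L$ is determined by its restriction to a finite-index sublattice (as $L\hookrightarrow L\otimes\Q = (M\oplus N)\otimes\Q$); uniqueness of the extension immediately gives $\widetilde{g_1 g_2} = \tilde g_1\tilde g_2$. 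Therefore $\{\tilde g : g\in G\}$ is a subgroup of $O(L)$ isomorphic to $G$, acting trivially on $M^\perp$, which is the claim.

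The only genuinely delicate point is the existence half of the gluing criterion: namely that a $\gamma$-compatible pair $(\varphi,\id_N)$ of isometries of $M$ and $N$ actually glues to an \emph{integral} isometry of $L$, not merely a rational one. This is where unimodularity of $L$ is essential, and it is precisely the content of the standard Nikulin-type statement (see \cite[Cor.\ 1.5.2]{Nik79} or the discussion preceding \Ref{oss}{overlattice}): the subgroup of $A_M\oplus A_N$ realizing $L$ as an overlattice of $M\oplus N$ is the graph $\Gamma_\gamma$ of $\gamma$, and an isometry of $M\oplus N$ extends to $L$ exactly when it preserves $\Gamma_\gamma$; the pair $(\varphi,\id)$ preserves $\Gamma_\gamma$ iff $\gamma\circ\bar\varphi = \bar\varphi'\circ\gamma$ with $\bar\varphi' = \id$, i.e.\ iff $\bar\varphi = \id$. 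I would simply cite this; no computation is needed beyond unwinding the definitions. A remark worth adding is that the same argument shows more: any $g\in O(M)$ extends to $L$ after possibly acting nontrivially on $M^\perp$ precisely when $\bar g$ lies in the image of $O(N)\to O(A_N)$ transported via $\gamma$, but for the present statement the trivial-on-$A_M$ hypothesis lets us take the cheapest possible companion $\id_{M^\perp}$.
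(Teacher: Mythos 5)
Your argument is correct: the identification of $L$ with the overlattice of $M\oplus M^\perp$ given by the graph of the glue anti-isometry $\gamma\colon A_M\to A_{M^\perp}$, the observation that $(g,\mathrm{id})$ preserves this graph precisely when $\bar g=\mathrm{id}$ on $A_M$, and the uniqueness of the extension (hence compatibility with composition) together give exactly the statement. The paper states this lemma without proof, as a standard consequence of Nikulin's theory (it is in the 1980 paper \cite{Nik80}, not \cite{Nik79}), and your gluing argument is precisely the intended one.
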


\begin{defn}
Let $L$ be a lattice and $G\subset O(L)$. We denote by $T_G(L)$ ($S_G(L)$) the \em invariant \em (the \em co-invariant) lattice of $L$. \em If $G$ acts on a manifold $X$, we denote by $T_G(X)$ ($S_G(X)$) the \em invariant \em (the \em co-invariant) lattice of the induced action on $H^2(X,\Z)$. \em
\end{defn}

\begin{oss}\label{oss:G_tors}
Let $L$ be a unimodular lattice and let $G\subset O(L)$. Then $A_{T_G(L)}$ is of $|G|$ torsion.
\end{oss}
\begin{defn}
A lattice $M$ is called \em $2$-elementary \em if $A_M=(\mathbb{Z}/2\Z)^a$. For such lattices, the invariant $\delta$ is defined to have value $0$ if the discriminant quadratic form is integer valued, $1$ otherwise.
\end{defn}
\begin{thm}\cite[Theorem 3.6.2]{Nik80}
A $2$-elementary indefinite lattice is uniquely determined by its rank, its signature, the length of its discriminant group and $\delta$
\end{thm}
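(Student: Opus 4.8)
The statement is Nikulin's, and the route I would take is to reduce it to the general theory of discriminant forms, so I shall freely use the existence and uniqueness theorems for even lattices in a fixed genus (the same tools that underlie the lattice arguments above). Recall that the genus of an even lattice $L$ is determined by its signature together with its discriminant form $q_{A_L}\colon A_L\to\Q/2\Z$; hence the first task is to pin down $q_{A_L}$ from $l(A_L)$ and $\delta$ alone, once the signature is fixed.

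\emph{Step 1 (classification of $2$-elementary forms).} A nondegenerate finite quadratic form on $(\Z/2\Z)^a$ is an orthogonal direct sum of the elementary pieces $\langle 1/2\rangle$ and $\langle -1/2\rangle$ (the two forms on $\Z/2\Z$, both with $\delta=1$) together with the two nondegenerate forms $u$ and $v$ on $(\Z/2\Z)^2$, which are integer valued ($\delta=0$), $u$ of signature $0$ and $v$ of signature $4$ modulo $8$. Using the standard relations among these building blocks (such as $u\oplus u\cong v\oplus v$ and the relation rewriting three copies of $\langle\pm1/2\rangle$ as one copy plus a rank-two piece) one brings any such form to a normal form and checks that its isometry class is determined by the triple $(a,\delta,\sigma\bmod 8)$, where $\sigma$ denotes the signature in the sense of Milgram's reciprocity formula; here $a=l(A_L)$ and one has $\sigma\equiv a\pmod 2$. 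Since Milgram's formula gives $\sigma(q_{A_L})\equiv\mathrm{sign}(L)\pmod 8$, the signature of $L$ already fixes $\sigma(q_{A_L})\bmod 8$. Hence, among $2$-elementary lattices, the quadruple $(\mathrm{rk}\,L,\ \mathrm{sign}\,L,\ l(A_L),\ \delta)$ determines the pair $(\mathrm{sign}\,L,\ q_{A_L})$, that is, the genus of $L$; conversely such a quadruple is realised by some lattice exactly when it is compatible, which is where the existence theorem enters (its local condition at $2$ is automatic for the forms above).

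\emph{Step 2 (from genus to isometry class).} The congruence $l(A_L)\equiv\mathrm{rk}\,L\pmod 2$ noted above forces, for a $2$-elementary lattice, either $\mathrm{rk}\,L\ge l(A_L)+2$ or $\mathrm{rk}\,L=l(A_L)$. In the first case, $L$ being indefinite, the uniqueness theorem for lattices in a genus applies and, combined with Step 1, shows that $L$ is determined by the quadruple. The remaining case $\mathrm{rk}\,L=l(A_L)=a$ must be handled directly: the exponent of $A_L$ being $2$ forces $2L^\vee\subseteq L\subseteq L^\vee$ with $[L^\vee:L]=2^a=[\tfrac12 L:L]$, whence $L^\vee=\tfrac12 L$; this says $(x,y)\in2\Z$ for all $x,y\in L$, so $L\cong N(2)$ for an indefinite unimodular lattice $N$, even or odd according as $\delta=0$ or $\delta=1$. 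By the classical classification of indefinite unimodular lattices, $N$ is determined by its signature and its parity, so $L$ is determined by $(\mathrm{rk}\,L,\mathrm{sign}\,L,\delta)$ — and here $l(A_L)=\mathrm{rk}\,L$ adds nothing.

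I expect the genuinely laborious part to be Step 1, namely the bookkeeping of the relations among $\langle\pm1/2\rangle$, $u$ and $v$ needed to show that no two distinct normal forms share the triple $(a,\delta,\sigma\bmod 8)$ and, symmetrically, that every compatible triple occurs; the case $\mathrm{rk}\,L=l(A_L)$ of Step 2 and the appeal to the existence and uniqueness theorems are comparatively routine. Putting the two steps together proves the theorem.
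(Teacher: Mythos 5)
The paper gives no proof of this statement—it is quoted verbatim from Nikulin—so there is nothing internal to compare against; judged on its own, your argument is correct and follows the standard route, which is essentially Nikulin's original one. All the load-bearing steps check out: the reduction of the genus to the quadruple $(\rk L,\ \mathrm{sign}\,L,\ l(A_L),\ \delta)$ via the classification of $2$-elementary finite quadratic forms by $(a,\delta,\sigma\bmod 8)$ together with Milgram's formula, the parity $l(A_L)\equiv\rk L\pmod 2$ that correctly rules out the would-be awkward case $\rk L=l(A_L)+1$, the appeal to uniqueness in the genus when $\rk L\ge l(A_L)+2$, and the rescaling $L\cong N(2)$ with $N$ indefinite unimodular (even iff $\delta=0$) in the boundary case $\rk L=l(A_L)$.
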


Some basic building blocks for such lattices are $U$, $U(2)$, $D_4$, $E_8$ and $E_8(2)$ for $\delta=0$ and $\langle2\rangle$, $\langle-2\rangle$ for $\delta=1$.

\subsection{Useful notions on \issp manifolds}

Here we gather several known results on \issp manifolds. Many of these results are taken from the survey of Huybrechts \cite{Huy01}.

\begin{defn}
A \kahl manifold $X$ is called an \em irreducible holomorphic symplectic manifold \em (short: an \em \issp manifold\em) if the following hold:
\begin{itemize}\renewcommand{\labelitemi}{$\bullet$}
\item $X$ is compact.
\item $X$ is simply connected.
\item $H^{2,0}(X)=\mathbb{C}\sigma_X$, where $\sigma_X$ is an everywhere non-degenerate symplectic 2-form.
\end{itemize}
\end{defn}

There is the equivalent, but more differential geometric notion of \em \hk manifold. \em In this article we keep with \issp manifolds and refer the interested reader to the literature for a comparison of the two notions.

There are not many known examples of \issp manifolds and for a long time the only known ones were $K3$ surfaces (which are the only examples in dimension 2) and two families of examples given by Beauville \cite{beau3}:

\begin{ex}\label{ex:kntipo}
Let $S$ be a $K3$ surface and let $S^{(n)}$ be its $n$-th symmetric product. There exists a minimal resolution of singularities (called \em the Hilbert$-$Chow morphism\em)
\begin{equation}\nonumber
S^{[n]}\,\stackrel{HC}{\rightarrow}\,S^{(n)},
\end{equation}
where $S^{[n]}$ is the Douady space parametrizing zero dimensional analytic subsets of $S$ of length $n$. Furthermore this resolution of singularities endows $S^{[n]}$ with a symplectic form induced by the symplectic form on $S$. The manifold $S^{[n]}$ is an \issp manifold of dimension $2n$ and if $n\geq2$, we have $b_2(S^{[n]})=23$.\\  Whenever $X$ is an \issp manifold deformation equivalent to one of these manifolds, we will call $X$ \em a manifold of $K3^{[n]}$-type.\em
\end{ex}

\begin{ex}\label{ex:gen_kum_tipo}
Let $T$ be a complex $2$-torus and let
\begin{equation}\nonumber
T^{[n+1]}\,\stackrel{HC}{\rightarrow}\,T^{(n+1)}
\end{equation}
be the minimal resolution of singularities of the symmetric product as in \Ref{ex}{kntipo}. Beauville proved that the symplectic form on $T$ induces a symplectic form on $T^{[n+1]}$. However, this manifold is not \issp since it is not simply connected. But if we consider the summation map
\begin{align*}
T^{(n+1)} \stackrel{\Sigma}{\rightarrow}& T\\\nonumber
(t_1,\dots,t_{n+1}) \rightarrow & \sum_i t_i
\end{align*}
and set $K_n(T)=(\Sigma\circ HC)^{-1}(0)$, we obtain a new \issp manifold of dimension $2n$ called \em generalised Kummer manifold of $T$. \em If $n=1$ then $K_n(T)$ is just the usual Kummer surface, otherwise it has $b_2=7$. Note that the summation map $\Sigma\circ HC\colon T^{[n+1]}\rightarrow T$ is, in fact, the Albanese map of $T^{[n+1]}$.\\
Whenever $X$ is an \issp manifold deformation equivalent to one of these manifolds we will call $X$ \em a manifold of Kummer $n$-type. \em
\end{ex}

Two more deformation types of \issp manifolds are known and they were both discovered by O'Grady (see \cite{OGr99} and \cite{OGr03}). They are obtained as a symplectic resolution of singular moduli spaces of sheaves on $K3$ or abelian surfaces. We will denote by $Og_6$ the six-dimensional example and $Og_{10}$ the ten-dimensional example. It is known that $b_2(Og_6)=8$ and $b_2(Og_{10})=24$. We call manifolds which are deformation equivalent to $Og_{10}$ ($Og_6$) \em manifolds of $Og_{10}$-type ($Og_6$-type).\em

\begin{thm}
Let $X$ be an \issp manifold of dimension $2n$. Then there exists a canonically defined pairing $(\,,\,)_X$ on $H^2(X,\mathbb{C})$, the Beauville-Bogomolov pairing, and a constant $c_X$ (the Fujiki constant) such that the following holds:
\begin{equation*}
(\alpha,\alpha)_X^n=c_X\int_{X}\alpha^{2n}.
\end{equation*}

Moreover $c_X$ and $(\,,\,)_X$ are deformation and birational invariants.
\end{thm}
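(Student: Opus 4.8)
The plan is to follow Beauville's original argument. Normalise the symplectic form, replacing $\sigma_X$ by a scalar multiple so that $\int_X(\sigma_X\bar\sigma_X)^n=1$, and put
\begin{equation*}
q_X(\alpha)=\frac n2\int_X\alpha^2(\sigma_X\bar\sigma_X)^{n-1}+(1-n)\Bigl(\int_X\alpha\,\sigma_X^{\,n-1}\bar\sigma_X^{\,n}\Bigr)\Bigl(\int_X\alpha\,\sigma_X^{\,n}\bar\sigma_X^{\,n-1}\Bigr),
\end{equation*}
which is visibly a quadratic form on $H^2(X,\C)$; using $H^{2,0}(X)=\C\sigma_X$ one checks directly that it is real on $H^2(X,\R)$, nondegenerate, and a morphism of weight-two Hodge structures. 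All the content of the statement then reduces to the \emph{Fujiki relation} $\int_X\alpha^{2n}=c_X\,q_X(\alpha)^n$ for a single constant $c_X$: granting it, the signature, the rationality of $q_X$ on $H^2(X,\Q)$ (so that a canonical normalisation --- integral, primitive, positive on Kähler classes --- exists), and the invariance statements all follow formally.

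To prove the Fujiki relation, note that both sides are polynomial in $\alpha$, so by the identity theorem it suffices to verify it on a Zariski-dense subset of $H^2(X,\C)$. Write $\alpha=\lambda\sigma_X+\mu\bar\sigma_X+\gamma$ with $\gamma\in H^{1,1}(X,\C)$; by Hodge type the only cohomological integrals entering either side are those of the form $\int_X\sigma_X^{\,a}\bar\sigma_X^{\,a}\gamma^{2n-2a}$, and an elementary expansion gives $q_X(\alpha)=\lambda\mu+\tfrac n2\int_X\gamma^2(\sigma_X\bar\sigma_X)^{n-1}$. Comparing coefficients of $(\lambda\mu)^a$, the relation becomes equivalent to the family of polynomial identities
\begin{equation*}
\int_X\sigma_X^{\,a}\bar\sigma_X^{\,a}\gamma^{2n-2a}=\lambda_a\Bigl(\int_X\gamma^2(\sigma_X\bar\sigma_X)^{n-1}\Bigr)^{n-a}\qquad(0\le a\le n)
\end{equation*}
for suitable universal constants $\lambda_a$ (which also pin down $c_X$, via the case $a=n$ together with the identities for $a<n$), to be checked for all $\gamma\in H^{1,1}(X,\R)$; since the Kähler cone is open in $H^{1,1}(X,\R)$, it is enough to treat $\gamma$ a Kähler class.

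This last point is the geometric heart of the proof and the main obstacle. For a Kähler class $\gamma$, Yau's solution of the Calabi conjecture produces a Ricci-flat Kähler metric with Kähler form $\gamma$; since $X$ is compact, simply connected and has $h^{2,0}=1$, the Bogomolov decomposition theorem together with Berger's holonomy classification forces the restricted holonomy to be exactly $Sp(n)$. Hence $\sigma_X$ and $\gamma$ are parallel, the triple $(\gamma,\mathrm{Re}\,\sigma_X,\mathrm{Im}\,\sigma_X)$ is, up to a common rescaling, a hyperkähler triple, and each integrand $\sigma_X^{\,a}\bar\sigma_X^{\,a}\gamma^{2n-2a}$ is a fixed constant times the Riemannian volume form --- the constant being the same at every point, since there the tangent space with its parallel structures is isomorphic to the standard model $\mathbb{H}^n$. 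Computing these constants, and checking that they satisfy the multiplicative relations forced by the displayed identities, is a finite piece of linear algebra on $\mathbb{H}^n$; carrying it out establishes the Fujiki relation and determines $c_X$.

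It remains to record the invariance. Since $P_X(\alpha):=\int_X\alpha^{2n}$ is the $n$-th power of a nondegenerate quadratic form, $q_X$ is determined by $P_X$ up to an $n$-th root of unity and $c_X$ is determined by $P_X$ outright; for a smooth proper family $\mathcal{X}\to B$ over a connected base the $P_{X_b}$ form a flat family of polynomials inside the local system $R^2\pi_*\Z$, so continuity forces the root of unity (and $c_{X_b}$) to be constant, giving deformation invariance. For birational invariance, a birational map $f\colon X\dashrightarrow X'$ of \issp manifolds contracts no divisor (as $K_X=K_{X'}=0$), hence is an isomorphism away from closed subsets of codimension $\ge 2$ and induces an isomorphism $f_*\colon H^2(X,\Z)\xrightarrow{\ \sim\ }H^2(X',\Z)$ with $\int_X\alpha^{2n}=\int_{X'}(f_*\alpha)^{2n}$ (e.g.\ by a resolution of the graph); combined with the uniqueness just noted, this shows $c_X=c_{X'}$ and that $f_*$ is an isometry of Beauville--Bogomolov forms.
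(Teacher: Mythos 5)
The paper does not prove this statement: it is quoted in the preliminaries as the classical Beauville--Bogomolov--Fujiki theorem, with the reader referred to the literature (Beauville's paper and Huybrechts' survey), so there is no in-paper argument to compare yours against. On its own terms your outline is the standard Beauville--Fujiki argument and is essentially sound: the formula for $q_X$, the reduction of all claims to the Fujiki relation, the expansion $\alpha=\lambda\sigma_X+\mu\bar\sigma_X+\gamma$ with only the type-$(2n,2n)$ terms surviving, the passage to K\"ahler $\gamma$ by Zariski density, and the deformation/birational invariance via irreducibility of the quadric and codimension-two isomorphisms are all correct. Two caveats. First, the entire content of the Fujiki relation sits inside the sentence you defer as ``a finite piece of linear algebra on $\mathbb{H}^n$'': one must verify that the universal constants $c_a=\sigma_0^{a}\bar\sigma_0^{a}\omega_I^{2n-2a}/\mathrm{vol}$, after eliminating the relative scale of $\sigma_X$ against $\gamma$ and the total volume by means of the two normalisations, assemble into the coefficients of $c\,q_X(\alpha)^n$ for a \emph{single} constant $c$ independent of $a$; this is true and finitely checkable (most cleanly via the $SO(3)$-symmetry of the flat model, which gives $\int(a\omega_I+b\omega_J+c\omega_K)^{2n}=K(a^2+b^2+c^2)^n$ at once), but it is where the theorem actually lives, and relatedly $(\gamma,\mathrm{Re}\,\sigma_X,\mathrm{Im}\,\sigma_X)$ is not a common rescaling of a hyperk\"ahler triple --- $\sigma_X$ carries its own scale relative to $\gamma$, which must be tracked through the computation. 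Second, be aware that the proof now standard in the literature avoids this computation entirely: by local Torelli the periods of nearby deformations fill an open subset of the quadric $\{q_X=0\}$, on which $\int_X\alpha^{2n}$ vanishes since $\sigma_t^{n+1}=0$; irreducibility of the quadric then gives $q_X\mid\int_X\alpha^{2n}$ and, iterating, $\int_X\alpha^{2n}=c_Xq_X(\alpha)^n$. Either route is legitimate; yours trades the Torelli input for an explicit pointwise computation that you should actually carry out before calling the proof complete.
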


The Beauville-Bogomolov forms of known manifolds are the following:
\renewcommand{\arraystretch}{1.5}
\begin{table}[h]
\begin{tabular}{|c|c|}
\hline
Deformation class & Lattice structure on $H^2$\\
\hline
\kntiposp & $U\oplus U\oplus U\oplus E_8(-1)\oplus E_8(-1)\oplus (2-2n)$\\
\hline
Kummer $n$-type & $U\oplus U\oplus U\oplus (-2-2n)$\\
\hline
$Og_6$-type & $U\oplus U\oplus U\oplus (-2)\oplus (-2)$\\
\hline
$Og_{10}$-type & $U\oplus U\oplus U\oplus E_8(-1)\oplus E_8(-1)\oplus A_2(-1)$\\
\hline
\end{tabular}
\end{table}

Let $X$ be an \issp manifold, let us consider the natural map $\nu\colon\aut(X)\rightarrow O(H^2(X))$ associating to a morphism its induced action on cohomology. Hassett and Tschinkel \cite[Thm.\ 2.1]{HT13} prove that the kernel of the map $\nu$ is a deformation invariant of $X$. Some examples of such kernels are known: it is trivial if $X$ is the Hilbert scheme of points of a very general $K3$ (see \cite[Prop.\ 10]{Beau83b} and it is generated by the group of points of order $n+1$ and the sign change on an abelian variety $A$ if $X=K_n(A)$  (see \cite[Prop.\ 9]{Beau83b}).


\subsection{Moduli of \issp manifolds and the Torelli problem}
\begin{lem}
Let $X$ be an \issp manifold with \kahl class $\omega$ and symplectic form $\sigma_X$. Then there exists a family
\renewcommand{\arraystretch}{1.2}
\[\begin{array}{rcc} 
TW_{\omega}(X)& := &X\times\mathbb{P}^1\\\nonumber
 && \downarrow\\\nonumber
\{(a,b,c)\,\in\,\mathbb{R}^3,\,a^2+b^2+c^2=1\}=S^2 &\cong&\mathbb{P}^1 
\end{array}\]
called \emph{Twistor space} such that $TW_{\omega}(X)_{(a,b,c)}\cong X$ with complex structure given by the \kahl class $a\omega+b(\sigma_X+\overline{\sigma}_X)+c(\sigma_X-\overline{\sigma}_X)$.
\end{lem}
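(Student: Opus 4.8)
The plan is to build the Twistor family from the hyperk\"{a}hler metric canonically attached to the pair $(X,\omega)$ and then to invoke the standard integrability theorem for twistor spaces. First I would apply Yau's solution of the Calabi conjecture to obtain the unique Ricci-flat \kahl metric $g$ on $X$ with \kahl class $\omega$. Since $X$ is \issp, the Beauville--Bogomolov decomposition theorem, combined with Berger's classification of holonomy groups, forces the restricted holonomy of $g$ to be the full compact symplectic group $Sp(n)$, where $2n=\dim_\C X$ (the hypotheses $\pi_1(X)=1$ and $H^{2,0}(X)=\C\sigma_X$ with $\sigma_X$ nondegenerate rule out torus and Calabi--Yau factors and leave a single hyperk\"{a}hler factor). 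Equivalently $g$ is a \hk metric: there are $g$-parallel complex structures $I,J,K$ on $X$, with $I$ the given one and $IJ=-JI=K$, each \kahl with respect to $g$.

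Next I would observe that for every $(a,b,c)\in\R^3$ with $a^2+b^2+c^2=1$ the endomorphism $aI+bJ+cK$ is again a $g$-parallel almost complex structure compatible with $g$, hence integrable and \kahl; this produces a $2$-sphere of \kahl structures on the fixed underlying smooth manifold of $X$, the \kahl form of $aI+bJ+cK$ being $a\omega_I+b\omega_J+c\omega_K$, where $\omega_I=\omega$ and $\omega_J,\omega_K$ are the \kahl forms of $J$ and $K$. Rescaling $\sigma_X$ so that $\sigma_X=\omega_J+i\,\omega_K$ --- legitimate since $H^{2,0}(X)$ is one-dimensional and $\omega_J+i\,\omega_K$ is a nowhere-degenerate holomorphic $2$-form for $I$ --- this \kahl class is, up to this normalisation, the linear combination of $\omega$, $\sigma_X+\overline{\sigma}_X$ and $\sigma_X-\overline{\sigma}_X$ asserted in the statement.

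Finally I would assemble these into a family over $\PP^1$: equip $X\times S^2$ with the almost complex structure restricting to $aI+bJ+cK$ on $\{(a,b,c)\}\times X\cong X$ and to the standard one on $S^2\cong\PP^1$, and invoke the classical twistor-space construction of Atiyah--Hitchin--Singer (for this precise setting see \cite{Huy01}), which says that this almost complex structure is integrable. Then $TW_\omega(X):=X\times\PP^1$ is a complex manifold whose second projection is holomorphic with fibre over $(a,b,c)$ isomorphic to $X$ equipped with the complex structure $aI+bJ+cK$ --- exactly the claimed family.

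The main obstacle is that the argument is not elementary: it rests on Yau's theorem and on the integrability of the total-space almost complex structure, both of which are substantial analytic inputs. In a paper of this kind one does not reprove them, however --- the honest course is to cite them --- and everything else, namely passing from the \hk triple to the $2$-sphere of complex structures and matching the \kahl classes, is routine linear algebra on parallel forms.
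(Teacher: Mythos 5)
Your proposal is correct and is precisely the standard argument (Yau's theorem, the Beauville--Bogomolov/Berger holonomy reduction to $Sp(n)$, the resulting $S^2$ of parallel complex structures, and Atiyah--Hitchin--Singer integrability) that the paper implicitly relies on: the lemma is stated there without proof as a known fact, with the relevant reference being Huybrechts' survey \cite{Huy01}. Your remark that the real classes $\sigma_X+\overline{\sigma}_X$ and $\sigma_X-\overline{\sigma}_X$ only match $\omega_J$ and $\omega_K$ after a normalisation of $\sigma_X$ (and up to a factor of $i$ in the second one) is a fair and correct reading of the statement.
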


\begin{defn}
Let $X$ be an \issp manifold and let $H^2(X,\mathbb{Z})\,\cong\,N$. An isometry $f\colon H^2(X,\mathbb{Z})\,\rightarrow\,N$ is called \em a marking of $X$. \em A pair $(X,f)$ is called \em a marked \issp manifold.\em
\end{defn}

\begin{defn}
Let $(X,f)$ be a marked \issp manifold and let $H^2(X,\Z)\cong N$. Let $\mc{M}_N$ be the set $\{(X,f)\}/\sim$ of marked \issp manifolds,  where $(X,f)\sim (X',f')$ if and only if there exists an isomorphism $\phi\colon X\,\rightarrow\,X'$ such that $\phi^*=f^{-1}\circ f'$.
\end{defn}

\begin{defn}
Let $X$ be an \issp manifold and let $N$ be a lattice such that $H^2(X,\mathbb{Z})\cong N$. Then we define the \em period domain $\Omega_N$ \em as
\begin{equation*}
\Omega_N=\{x\in \mathbb{P}(N\otimes\mathbb{C})\,|\,(x,x)_N=0,\,(x+\overline{x},x+\overline{x})_N>0\}.
\end{equation*}
\end{defn}

\begin{defn}
Let $\mathcal{X}\rightarrow S$ be a flat family of deformations of $X$ and let $f$ be a marking of $X$ into the lattice $N$. Let moreover $F$ be a marking of $\mathcal{X}$ compatible with $f$. Then the \em period map \em $\mathcal{P}\colon S\,\rightarrow\,\Omega_N$ is defined as follows:
\begin{equation*}
\mathcal{P}(s):=F_s(H^{2,0}(\mathcal{X}_s)).
\end{equation*}
\end{defn}

The period map $\mc{P}$ of the (flat) familiy $\mathcal{X}\rightarrow Def(X)$ of deformations of $X$ is called \em the local period map.\em

\begin{thm}[Local Torelli, Beauville \cite{beau3}]\label{thm:local_torelli}
Let $(X,f)$ and $N$ be as above and let moreover $F$ be a compatible marking of $\mathcal{X}\rightarrow Def(X)$. Then the map $Def(X)\stackrel{\mathcal{P}}{\rightarrow} \Omega_N$ is a local isomorphism.
\end{thm}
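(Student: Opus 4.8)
The plan is to follow Griffiths' recipe for period maps, using the two features special to \issp manifolds: the triviality of $K_X$ and the isomorphism $T_X\cong\Omega^1_X$ induced by contraction with $\sigma_X$. First I would check that $Def(X)$ is smooth. Since $\sigma_X^{\wedge n}$ is a nowhere-vanishing section of $K_X$, the manifold $X$ is Calabi--Yau, so by the Bogomolov--Tian--Todorov unobstructedness theorem $Def(X)$ is a smooth germ of dimension $h^1(X,T_X)$; and since contraction with $\sigma_X$ gives an isomorphism $T_X\xrightarrow{\sim}\Omega^1_X$, we get $h^1(X,T_X)=h^{1,1}(X)=b_2(X)-2$, using $h^{2,0}(X)=1$.

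Next I would describe the target. By definition $\Omega_N$ is the open subset cut out by $(x+\overline x,x+\overline x)_N>0$ inside the smooth quadric $\{(x,x)_N=0\}\subset\PP(N\otimes\C)$, hence a complex manifold of dimension $b_2(X)-2=\dim Def(X)$. For $[x]\in\Omega_N$ one has $x\in x^\perp$ (as $(x,x)_N=0$), so $T_{[x]}\Omega_N=\Hom(\C x,\,x^\perp/\C x)$; and when $x$ represents $H^{2,0}(X)=\C\sigma_X$, the facts $(\sigma_X,\sigma_X)_X=0$, $(\sigma_X,\overline\sigma_X)_X\neq 0$ and $H^{2,0}(X)\perp H^{1,1}(X)$ identify $x^\perp/\C x$ with $H^{1,1}(X)$, so $T_{[x]}\Omega_N\cong H^{1,1}(X)$.

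The heart of the matter is the differential of $\mathcal{P}$ at the distinguished point $0\in Def(X)$. By Griffiths, $d\mathcal{P}_0$ is the composition of the Kodaira--Spencer isomorphism $T_0Def(X)\cong H^1(X,T_X)$ with the map $\mu\colon H^1(X,T_X)\to\Hom\bigl(H^{2,0}(X),\,H^2(X,\C)/H^{2,0}(X)\bigr)$ sending $\theta$ to $\alpha\mapsto\theta\cdot\alpha$, where $\cdot$ denotes cup product followed by the contraction $T_X\otimes\Omega^2_X\to\Omega^1_X$. Since $H^{2,0}(X)=\C\sigma_X$ is one-dimensional, $\mu(\theta)$ is determined by $\theta\cdot\sigma_X\in\sigma_X^\perp/\C\sigma_X=T_{\mathcal{P}(0)}\Omega_N$; and through the identification $T_X\xrightarrow{\sim}\Omega^1_X$, $\theta\mapsto\theta\cdot\sigma_X$, the assignment $\theta\mapsto\theta\cdot\sigma_X$ is tautologically the identity of $H^{1,1}(X)$. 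Hence $d\mathcal{P}_0$ is an isomorphism; as $Def(X)$ and $\Omega_N$ are smooth of the same dimension, the inverse function theorem makes $\mathcal{P}$ a local isomorphism near $0$, and since every point of $Def(X)$ is the base point of the Kuranishi family of the corresponding fibre, this holds at every point.

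I expect the only real obstacle to be the appeal to the Bogomolov--Tian--Todorov theorem (equivalently, the Tian--Todorov lemma for Calabi--Yau manifolds): without unobstructedness one only obtains that $\mathcal{P}$ is an immersion onto a smooth analytic subgerm of $\Omega_N$, not a local isomorphism onto an open set. The remaining ingredients --- the identification of $T\Omega_N$ and the tautological description of $d\mathcal{P}_0$ through $T_X\cong\Omega^1_X$ --- are formal once the right identifications are set up.
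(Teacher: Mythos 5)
Your argument is correct and is essentially Beauville's own proof, which the paper simply cites rather than reproduces: unobstructedness of $Def(X)$ via Bogomolov--Tian--Todorov, the identification $T_{[\sigma_X]}\Omega_N\cong\Hom(H^{2,0}(X),H^{1,1}(X))$, and the observation that $d\mathcal{P}_0$ becomes the tautological isomorphism under the contraction $T_X\cong\Omega^1_X$. No gaps; the appeal to unobstructedness is exactly the ingredient Beauville uses to upgrade the immersion to a local isomorphism.
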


This local isomorphism allows us to glue the various universal deformations giving $\mc{M}_N$ the structure of a complex space.
Another well known fact about the period map is the following:

\begin{thm}[Huybrechts, \cite{Huy01}]\label{thm:surj_period}
Let $\mc{M}_N^0$ be a connected component of $\mc{M}_N$. Then the period map $\mc{P}\colon\mc{M}_N^0\,\rightarrow\,\Omega_N$ is surjective.
\end{thm}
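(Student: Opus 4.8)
The plan is to follow the strategy used for $K3$ surfaces, combining the local isomorphism property of $\mc{P}$ with the completeness of twistor families. By Local Torelli (\Ref{thm}{local_torelli}) the map $\mc{P}\colon\mc{M}_N^0\to\Omega_N$ is a local isomorphism, so its image $W:=\mc{P}(\mc{M}_N^0)$ is open and non-empty; since $\Omega_N$ is connected, it is enough to show that $W$ is also closed, which I would do by producing, through enough period points, a complete rational curve inside $\mc{M}_N^0$ whose period image already lies in $W$.

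First I would describe twistor lines. Given a marked pair $(X,f)\in\mc{M}_N^0$ and a \kahl class $\omega$ on $X$, the twistor family $TW_\omega(X)\to\PP^1$ of the twistor construction recalled above is connected and contains the fibre $(X,f)$, hence maps into the connected component $\mc{M}_N^0$; moreover $\mc{P}$ carries $\PP^1$ isomorphically onto the \emph{twistor conic} $C_{f,\omega}=\Omega_N\cap\PP(V\otimes\C)$, where $V\subset N\otimes\R$ is the positive-definite three-space spanned by $f(\omega)$, $\mathrm{Re}\,f(\sigma_X)$ and $\mathrm{Im}\,f(\sigma_X)$. Since $\PP^1$ is proper, $C_{f,\omega}\subset W$; thus $W$ is stable under passage to twistor conics coming from \kahl classes.

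Next I would reduce to the \emph{very general} locus $\Omega_N^{vg}\subset\Omega_N$, the complement of the countably many hyperplane sections $\{x:(x,\ell)=0\}$ with $0\neq\ell\in N$; being the complement of countably many proper analytic subsets of a connected manifold, $\Omega_N^{vg}$ is non-empty and path-connected. A point $x\in\Omega_N^{vg}$ corresponds to a manifold $X$ with $\NS(X)=0$, hence with no curves, so that the \kahl cone of $X$ is a full connected component of its positive cone. Consequently, for \emph{every} positive-definite three-space $V\subset N\otimes\R$ containing the period plane $P_x\subset N\otimes\R$ of $x$ (the real $2$-plane spanned by $\mathrm{Re}\,x$ and $\mathrm{Im}\,x$), the conic $\Omega_N\cap\PP(V\otimes\C)$ is an honest twistor conic of the pair corresponding to $x$ (for a suitable \kahl class), so it lies in $W$ as soon as $x\in W$. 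A chaining argument then gives $\Omega_N^{vg}\subset W$: as $W$ is open and non-empty it contains some $x_0\in\Omega_N^{vg}$, and any $x\in\Omega_N^{vg}$ can be joined to $x_0$ by a short chain $x_0=z_0,z_1,\dots,z_k=x$ of very general period points such that consecutive period planes $P_{z_j}$ and $P_{z_{j+1}}$ span a positive three-space $V_j$ (possible, since any positive $2$-plane can be linked to any other through a bounded chain of positive $2$-planes each sharing a line with the next, and very general planes are dense among these); inductively $z_j\in W$ forces $C_{V_j}=\Omega_N\cap\PP(V_j\otimes\C)\subset W$ and hence $z_{j+1}\in W$, so $x\in W$. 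Finally, for an arbitrary $x\in\Omega_N$ I would pick a positive three-space $V\supset P_x$ for which $C_V:=\Omega_N\cap\PP(V\otimes\C)$ meets $\Omega_N^{vg}$ in some point $y$; then $y\in W$ by the previous step, $\NS$ of the manifold at $y$ is trivial, $C_V$ is its twistor conic, so $C_V\subset W$ and in particular $x\in W$. Hence $W=\Omega_N$.

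The main obstacle is the third step. One really needs that an \issp manifold with trivial N\'eron--Severi group has \kahl cone equal to a connected component of the positive cone: this is what forces \emph{all} positive three-spaces through a very general period plane --- not merely some of them --- to give rise to twistor conics, and it is precisely this abundance that makes the chaining possible. Granting it, the openness of $W$, the properness of $\PP^1$, the connectedness of $\Omega_N$ and the final density argument are comparatively formal.
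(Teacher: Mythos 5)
The paper does not prove this statement; it is quoted as background from Huybrechts's survey \cite{Huy01}. Your argument is essentially the standard proof from that source (local Torelli gives openness, twistor conics give properness along chains, and the key input — that a manifold with trivial N\'eron--Severi group has K\"ahler cone equal to a full connected component of the positive cone, which you correctly isolate as the nontrivial step — makes every positive three-space through a very general period plane into a twistor conic, so that generic twistor-path connectedness of $\Omega_N$ yields surjectivity), so it matches the intended proof and is correct modulo that granted input.
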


A weaker Global Torelli theorem holds, see \cite{Huy10}, \cite{Mar11} and \cite{Ver09}.

\begin{thm}[Global Torelli, Huybrechts, Markman and Verbitsky]\label{thm:global_torelli}
Let $X$ and $Y$ be two \issp manifolds. Suppose $\psi\colon H^2(X,\Z)\,\rightarrow\,H^2(Y,\Z)$ is a parallel transport operator preserving the Hodge structure. Then there exists a birational map $\phi\colon X\,\dashrightarrow\,Y$.
\end{thm}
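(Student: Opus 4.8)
The plan is to reduce the statement to Verbitsky's Global Torelli theorem for \emph{marked} \issp manifolds together with Huybrechts' analysis of non-separated points of $\mc{M}_N$ --- the ingredients assembled in \cite{Huy10}, \cite{Mar11} and \cite{Ver09}. Since $\psi$ is an isometry we may identify $H^2(Y,\Z)\cong H^2(X,\Z)=:N$. Fix a marking $f\colon H^2(X,\Z)\xrightarrow{\sim}N$ and set $g:=f\circ\psi^{-1}\colon H^2(Y,\Z)\xrightarrow{\sim}N$, so that $(X,f)$ and $(Y,g)$ are points of $\mc{M}_N$. As a parallel transport operator, $\psi$ is by definition assembled from the local system of a smooth proper family of \issp manifolds connecting $X$ and $Y$ along a path; choosing a marking of the family compatible with $f$ along that path produces a path in $\mc{M}_N$ from $(X,f)$ to $(Y,g)$, so these two marked manifolds lie in one and the same connected component $\mc{M}_N^0$. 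Finally, $\psi$ being a Hodge isometry gives $\psi(H^{2,0}(X))=H^{2,0}(Y)$, hence $\mc{P}(X,f)=f(H^{2,0}(X))=g(\psi(H^{2,0}(X)))=g(H^{2,0}(Y))=\mc{P}(Y,g)$, i.e.\ the two marked manifolds have the same period point $p\in\Omega_N$.

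Next I would invoke Verbitsky's theorem in the form sharpened by Markman: the period map $\mc{P}\colon\mc{M}_N^0\to\Omega_N$ becomes injective after identifying pairs of non-separated points, so any two points of $\mc{M}_N^0$ lying over the same period are either isomorphic or else form a non-separated pair. In the first case there is an honest isomorphism $X\xrightarrow{\sim}Y$, which is in particular a birational map. In the second case Huybrechts' theorem on non-separated points of the moduli space yields a birational map $\phi\colon X\dashrightarrow Y$, obtained as a limit of the fibrewise isomorphisms between the two families witnessing the non-separatedness (the relevant cone orientations are automatically respected, since a parallel transport operator preserves the orientation of the positive cone). Either way the required $\phi$ exists.

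The whole weight of the argument rests on the appeal to Verbitsky's theorem in the second paragraph, and this is the main obstacle: it does \emph{not} follow from Local Torelli (\Ref{thm}{local_torelli}) and surjectivity of the period map (\Ref{thm}{surj_period}) alone. Its proof passes through the Teichm\"uller space of complex structures compatible with a fixed \hk metric, the action of the mapping class group on this space, the ergodicity of that action on the space of complex structures, and the use of twistor lines to connect distinct points lying over a common period. By contrast, the marking bookkeeping of the first paragraph and Huybrechts' non-separatedness statement used in the second are, once the relevant definitions are in place, comparatively formal.
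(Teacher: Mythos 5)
The paper does not prove this statement: it is quoted as known background, with the proof delegated to \cite{Huy10}, \cite{Mar11} and \cite{Ver09}. Your reduction --- marking bookkeeping to place $(X,f)$ and $(Y,g)$ in the same component of $\mc{M}_N$ over the same period point, then Verbitsky's injectivity-up-to-non-separatedness of the period map, then Huybrechts' identification of non-separated points with birational manifolds --- is exactly the standard argument carried out in those references, and you correctly flag that the genuine content (Verbitsky's theorem) cannot be extracted from the Local Torelli and surjectivity statements recorded in this paper.
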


A characterization of parallel transport operators is needed to determine birational manifolds. In the case of manifolds of \kntipo, this has been provided by Markman \cite{Mar11}. Let $L$ be a lattice isometric to $H^2(K3^{[n]},\mathbb{Z})$ and let $O(L,\Lambda_{24})$ denote the set of primitive embeddings of $L$ into $\Lambda_{24}$. Finally let $O(L,\Lambda_{24})/O(\Lambda_{24})$ be the orbit space of isometric primitive embeddings.

\begin{thm}
Let $X$ be a manifold of \kntipo. Then there exists a canonically defined equivalence class of an embedding $\iota_X\colon H^2(X,\mathbb{Z})\rightarrow \Lambda_{24}$. A Hodge isometry $g\colon H^2(X,\mathbb{Z})\rightarrow H^2(Y,\mathbb{Z})$ is a parallel transport operator if and only if $\iota_X=g\circ\iota_Y$ in $O(L,\Lambda_{24})/O(\Lambda_{24})$
\end{thm}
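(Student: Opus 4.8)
The idea is to construct the invariant on the Hilbert scheme, spread it over the whole deformation class, and then reduce the equivalence to an identification of $\mathrm{Mon}^2(S^{[n]})$ with the stabiliser of the resulting orbit. For the model $X=S^{[n]}$, with $S$ a projective $K3$ surface, identify the Mukai lattice $\widetilde H(S,\Z)=H^0(S,\Z)\oplus H^2(S,\Z)\oplus H^4(S,\Z)$, with its Mukai pairing, with $\Lambda_{24}$, and recall the orthogonal splitting $H^2(S^{[n]},\Z)=H^2(S,\Z)\oplus\Z\delta$ in which $2\delta$ is the exceptional divisor of the Hilbert--Chow morphism and $\delta^2=2-2n$. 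Let $\iota_{S^{[n]}}$ send $H^2(S,\Z)$ tautologically into the middle summand and $\delta$ to a primitive vector of $H^2(S,\Z)^{\perp}\subset\widetilde H(S,\Z)$ of square $2-2n$, e.g.\ $(1,0,n-1)$; this is a primitive embedding whose orthogonal complement is spanned by a vector of square $2n-2$. Since $A_{\Lambda_{24}}=0$, the Corollary on corank--$1$ embeddings shows that its $O(\Lambda_{24})$--orbit is independent of $S$ and of the chosen vector. One should record on this orbit also an orientation of the positive cone, so that the final statement holds with $O^+$ in place of $O$; a Hodge isometry is in particular understood to respect it.

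Now observe that, for $L\cong H^2(K3^{[n]},\Z)$, the orbit space $O(L,\Lambda_{24})/O(\Lambda_{24})$ is finite: sending a primitive embedding to the isometry class of its complement identifies it with $O(A_L)/\{\pm\id\}$, where $A_L\cong\Z/(2n-2)$. Hence on any flat family of manifolds of $K3^{[n]}$--type a continuous choice of such an orbit is locally constant, so transporting the orbit of $\iota_{S^{[n]}}$ along a path in the deformation class defines an orbit $\iota_X$ for every $X$ of $K3^{[n]}$--type, \emph{provided} $\mathrm{Mon}^2(S^{[n]})$ stabilises the orbit of $\iota_{S^{[n]}}$. Under the identification above, the stabiliser of that orbit inside $O^+(L)$ is exactly $\{h\in O^+(L):\bar h=\pm\id\text{ on }A_L\}$, so the required invariance says precisely that monodromy operators act on the discriminant group by $\pm\id$; one checks this on a generating set of $\mathrm{Mon}^2(S^{[n]})$ furnished by deformations of $S^{[n]}$ and by reflections. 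Granting it, the ``only if'' direction is immediate: a parallel transport operator $g\colon H^2(X,\Z)\to H^2(Y,\Z)$ is induced along a path on which the $\iota$--orbit is constant, whence $\iota_X=\iota_Y\circ g$ in $O(L,\Lambda_{24})/O(\Lambda_{24})$.

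For the converse, given a Hodge isometry $g$ with $\iota_X=\iota_Y\circ g$, fix any parallel transport operator $\psi\colon H^2(X,\Z)\to H^2(Y,\Z)$ (one exists since $X$ and $Y$ are deformation equivalent), so that $h:=\psi^{-1}\circ g\in O^+(H^2(X,\Z))$ stabilises the orbit $\iota_X$; it then suffices to show $\{h\in O^+(L):\bar h=\pm\id\text{ on }A_L\}\subseteq\mathrm{Mon}^2(S^{[n]})$, since then $h\in\mathrm{Mon}^2(X)$ and $g=\psi\circ h$ is a parallel transport operator. This reverse inclusion is the crux of the proof and the main obstacle: one must exhibit enough genuine monodromy operators --- reflections $R_\delta$ in classes with $\delta^2=-2$ (from wall-crossing for stable objects on $S$, i.e.\ contractions of Lagrangian $\PP^1$'s), reflections in classes of square $2n-2$ and divisor $2n-2$ (from divisorial contractions), and the monodromies coming from automorphisms and deformations of $S$ --- and then prove, by a lattice-theoretic computation, that the subgroup of $O^+(L)$ they generate is all of $\{h:\bar h=\pm\id\}$; surjectivity of the period map (\Ref{thm}{surj_period}) and the global Torelli theorem (\Ref{thm}{global_torelli}) are invoked to guarantee that the relevant degenerations of $S^{[n]}$ really occur. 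Everything else --- the orientation bookkeeping aside --- is formal lattice theory.
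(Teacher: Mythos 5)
First, a remark on context: the paper does not prove this statement at all --- it is quoted from Markman's survey \cite{Mar11}, and its proof there rests on Markman's computation of the monodromy group of manifolds of \kntipo. So your sketch has to be measured against Markman's argument, and in structure it does follow the same route: construct the distinguished embedding on moduli spaces via $v^\perp$ inside the Mukai lattice, spread it over the deformation class, and reduce the parallel-transport criterion to a description of $\mathrm{Mon}^2$. Your purely lattice-theoretic reductions are fine: the complement of $H^2$ is rank one of square $2n-2$, the orbit space of embeddings is identified with $O(A_L)$ modulo the image of $O(K)=\{\pm1\}$ (whence finiteness and the count $2^r$), and the stabiliser in $O^+(L)$ of the orbit is exactly $\{h:\bar h=\pm\id \text{ on } A_L\}$; adding the orientation bookkeeping is also correct (Markman's precise statement includes it, the paper's paraphrase suppresses it).

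The genuine gap is the step you yourself flag as the crux: the equality $\mathrm{Mon}^2(K3^{[n]})=\{h\in O^+(L):\bar h=\pm\id \text{ on } A_L\}$, which your sketch dispatches with ``one checks this on a generating set'' and ``prove, by a lattice-theoretic computation''. Neither inclusion is lattice theory, and neither follows from the tools you invoke. The inclusion $\mathrm{Mon}^2\subseteq\{\bar h=\pm\id\}$ is Markman's integral-constraints theorem and requires a geometric monodromy invariant beyond $H^2$ (classes built from the Chern character of a universal sheaf, shown to be preserved under parallel transport); the reverse inclusion requires exhibiting each prospective generator as an honest monodromy operator, which Markman does through Fourier--Mukai equivalences between moduli spaces of sheaves and stratified Mukai flops, in work that predates and feeds into the Torelli framework. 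Surjectivity of the period map and the Global Torelli theorem (\Ref{thm}{surj_period}, \Ref{thm}{global_torelli}) cannot supply these operators: Torelli-type statements consume a description of $\mathrm{Mon}^2$ (via parallel transport) rather than produce one, so invoking them here is close to circular. As it stands, your proposal is a correct reduction of the theorem to an equally hard statement --- in fact to the very theorem of Markman being cited --- rather than a proof of it.
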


Note that the order of $O(L,\Lambda_{24})/O(\Lambda_{24})$ is $2^{r}$, where $r$ is the number of different prime factors of $n-1$.

Less is known about parallel transport for generalised Kummer manifolds, however there are some results due to Markman which appeared in \cite[Corollary 4.8]{MM12}. Let $X$ be a manifold of Kummer $n$-type and let $\mathcal{W}(X)$ be the group of orientation preserving isometries of $H^2(X,\mathbb{Z})$ acting as $\pm1$ on $A_X$. Let $\mathcal{N}(X)$ be the kernel of the map $\mathrm{det}\circ\chi\colon\mathcal{W}(X)\rightarrow {\pm 1}$, where $\chi$ is the character of the action on $A_X$. 
\begin{prop}\label{prop:mono_kum}
 Keep notation as above, then $Mon^2(X)\cap \mathcal{W}(X)=\mathcal{N}(X)$ and $Mon^2(X)=\mathcal{N}(X)$ if $n+1$ is a prime power.
\end{prop}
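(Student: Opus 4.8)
\noindent This is \cite[Corollary 4.8]{MM12}; we describe the strategy of proof. As in Markman's determination of the monodromy group of a manifold of \kntipo, the plan is to establish two inclusions. First, a \emph{lower bound} $\mathcal{N}(X)\subseteq Mon^2(X)$: one exhibits a generating set of $\mathcal{N}(X)$, each member of which is realised by a parallel transport operator of $X$. Second, an \emph{upper bound} $Mon^2(X)\cap\mathcal{W}(X)\subseteq\mathcal{N}(X)$: one checks that any monodromy operator lying in $\mathcal{W}(X)$ automatically lies in the kernel of $\det\cdot\chi$. Granting these, the last assertion follows by analysing the orthogonal group $O(q_{A_X})$ of the discriminant form on $A_X\cong\Z/(2n+2)\Z$.

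For the lower bound, write $X=K_n(A)$ and use its presentation as a fibre of the Albanese map of a moduli space of Bridgeland-stable objects on the abelian surface $A$. Monodromy operators are produced from three sources: deformations of $A$ in families of abelian surfaces, which act on the summand of $H^2(X,\Z)$ corresponding to $H^2(A,\Z)\cong U^3$ and fix the class $\delta$ of half the exceptional divisor of the Hilbert--Chow morphism ($\delta^2=-2n-2$); Fourier--Mukai transforms, in particular the derived equivalence between $A$ and its dual, which induce bimeromorphic transformations of $X$ and hence parallel transport operators; and wall-crossing in the space of stability conditions on $A$, which by \cite{Yos12} and \cite{BM13} realises each wall-crossing as a birational transformation of the moduli spaces, hence of their Albanese fibres. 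A lattice computation, using $H^2(X,\Z)\cong U^{3}\oplus\langle-2-2n\rangle$ and the explicit description of $\mathcal{N}(X)$ as an index-two subgroup of $\mathcal{W}(X)$, then shows that these operators generate $\mathcal{N}(X)$.

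For the upper bound one uses that parallel transport operators are orientation-preserving (Markman), so $Mon^2(X)\subseteq O^+(H^2(X,\Z))$; the substance is the action on $A_X$. The compatibility of the Hodge and integral structures relating $H^2(K_n(A),\Z)$ to the full cohomology of $A$ and of its dual abelian surface forces any monodromy operator in $\mathcal{W}(X)$ into $\ker(\det\cdot\chi)$, which is the required inclusion. For the prime-power case, observe that if $n+1$ is a prime power then $O(q_{A_X})=\{\pm\id\}$ — the relevant $p$-adic block of the discriminant form is anisotropic enough that only $\pm\id$ preserves it — so every isometry of $H^2(X,\Z)$ acts as $\pm\id$ on $A_X$, whence $Mon^2(X)\subseteq O^+(H^2(X,\Z))=\mathcal{W}(X)$ and the two inclusions give $Mon^2(X)=\mathcal{N}(X)$; when $n+1$ has at least two distinct prime factors, $O(q_{A_X})$ is strictly larger than $\{\pm\id\}$, which is the reason the inclusion $Mon^2(X)\subseteq\mathcal{W}(X)$ fails in general. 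The main obstacle is the lower bound: one has to invoke the theory of moduli of Bridgeland-stable objects and its wall-crossing behaviour to manufacture enough geometric monodromy, and then carry out the bookkeeping that the operators so produced exhaust the arithmetically defined group $\mathcal{N}(X)$.
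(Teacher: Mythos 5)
The paper offers no proof of this proposition: it is quoted verbatim from \cite[Corollary 4.8]{MM12}, which is exactly what you do, so your proposal takes the same approach as the paper. Your supplementary sketch of the Markman--Mehrotra strategy and your observation that $O(q_{A_X})=\{\pm\id\}$ precisely when $n+1$ is a prime power are essentially right; the one point to be careful about is the assertion that the derived equivalence between $A$ and $\hat{A}$ induces bimeromorphic maps of the Albanese fibres and hence parallel transport operators --- the paper's own \Ref{rmk}{mod_stab_obj_ab} recalls Namikawa's example showing that $K_\tau(v)$ and $K_{\hat{\tau}}(\hat{v})$ need not be birational, which is exactly why the first equality is only claimed after intersecting $Mon^2(X)$ with $\mathcal{W}(X)$.
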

This implies that, if $n+1$ is a prime power, the moduli space of marked generalised kummer manifolds has $4$ connected components (corresponding to the four connected components of marked abelian surfaces).

\subsection{Double EPW-Sextics}\label{ssec:epw}
Double EPW-sextics were first introduced by O'Grady in \cite{OGr06}, they are in many ways a higher dimensional analogue to $K3$ surfaces obtained as the double cover of $\mathbb{P}^2$ ramified along a sextic curve.

Let $V\cong\mathbb{C}^6$ be a six dimensional vector space with basis given by $\{e_0,e_1,e_2,e_3,e_4,e_5\}$ and let 
\begin{equation}\nonumber
vol(e_0\wedge e_1\wedge e_2\wedge e_3\wedge e_4\wedge e_5)=1
\end{equation}
be a volume form, giving a symplectic form $\sigma$ on $\Lambda^3V$ defined by
\begin{equation}\nonumber
\sigma(\alpha,\beta)=vol(\alpha\,\wedge\,\beta).
\end{equation}

Let $\mathbb{LG}(\Lambda^3V)$ be the set of lagrangian subspaces of $\Lambda^3V$ with respect to $\sigma$. 
Furthermore let $F$ be the vector bundle on $\mathbb{P}(V)$ with fibre 
\begin{equation}\nonumber
F_{v}=\{\alpha\,\in\,\Lambda^3V\,,\,\alpha\,\wedge\,v=0\}.
\end{equation}

Let $A\in\mathbb{LG}(\Lambda^3V)$ and let $\lambda_A(v)$ be the following composition
\begin{equation}\label{EPW_eq}\nonumber
F_{v}\,\rightarrow\,\Lambda^3 V\rightarrow (\Lambda^3 V)/A,
\end{equation}
where the first map is the injection of $F_{v}$ as a subspace of $\Lambda^3 V$ and the second is the projection to the quotient with respect to $A$. Therefore we define $Y_A[i]$ as the following locus:
\begin{equation}\nonumber
Y_A[i]=\{[v]\,\in\,\mathbb{P}(V)\,,\,\dim(A\,\cap\,F_v)\geq i\}.
\end{equation}
Here $Y_A[1]=Y_A$ is the EPW-sextic associated to $A$ and coincides with the degeneracy locus of $\lambda_A$ if $A$ is general.

\begin{defn}
Let $\mathbb{LG}(\Lambda^3 V)^0$ be the open subset of lagrangian subspaces $A$ such that the following hold
\begin{itemize}\renewcommand{\labelitemi}{$\bullet$}
\item $Y_A[3]=\emptyset$,
\item $Gr(3,6)\cap \mathbb{P}(A)=\emptyset$, where $Gr(3,6)\,\subset\,\mathbb{P}(\Lambda^3 \mathbb{C}^6)$ via the Pl\"{u}cker embedding.
\end{itemize}
\end{defn}
Let us remark that $\mathbb{LG}(\Lambda^3V)^0$ contains the generic lagrangian subspace.

\begin{thm}\cite[Theorem 1.1]{OGr06}\label{thm:kieran_epw}
For $A\in\,\mathbb{LG}(\Lambda^3V)^0$ there exists a double cover $X_A\,\rightarrow\,Y_A$  ramified along $Y_A[2]$ such that $X_A$ is a manifold of \ktipo.
\end{thm}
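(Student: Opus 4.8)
The strategy is to realise $Y_A$ as the degeneracy locus of a \emph{self-adjoint} morphism of vector bundles on $\PP(V)$, to build $X_A$ out of the associated cokernel sheaf exactly as a degree-two $K3$ is built from the square root of a plane sextic, and only at the end to pin down the deformation class. First I would make the self-adjointness explicit. For $[v]\in\PP(V)$ one has $F_v=v\wedge\Lambda^2V$, and $\sigma(v\wedge\beta,v\wedge\gamma)=vol(v\wedge\beta\wedge v\wedge\gamma)=0$, so $F_v$ is isotropic of dimension $10$, hence Lagrangian; $A$ is Lagrangian by hypothesis. Thus $\sigma$ identifies $\Lambda^3V/A$ with $A^\vee$ and $F_v$ with $(\Lambda^3V/F_v)^\vee$, and under these identifications $\lambda_A(v)$ and its transpose have the same corank, equal to $\dim(A\cap F_v)$ (both pairings $A\times F_v\to\C$ induced by $\sigma$ have left and right kernels $A\cap F_v$, since $A$ and $F_v$ are Lagrangian). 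Globalising over $\PP(V)$ and keeping track of the twists (one computes $\det F=\mathcal O_{\PP(V)}(-6)$, so $\det\lambda_A\in H^0(\mathcal O_{\PP(V)}(6))$ and its zero scheme is the sextic $Y_A$), one obtains that $\lambda_A$ is self-adjoint up to a twist by $\mathcal O_{\PP(V)}(1)$ and that $Y_A[i]$ is the corank-$\geq i$ locus of this self-adjoint morphism.

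The two conditions defining $\mathbb{LG}(\Lambda^3V)^0$ enter next: $Gr(3,6)\cap\PP(A)=\emptyset$ forces $\lambda_A$ to be generically of maximal rank, so that $Y_A\subsetneq\PP(V)$ is an honest sextic rather than all of $\PP(V)$, while $Y_A[3]=\emptyset$ forces the stratification $Y_A\supset Y_A[2]$ to have the expected codimensions ($1$ and $3$) and pins down the local normal forms of $\lambda_A$ — one vanishing entry transversally along $Y_A$, and a nondegenerate ternary quadratic form (in the three normal directions to the surface $Y_A[2]$) along $Y_A[2]$; in particular $Y_A$ is nodal along $Y_A[2]$ and smooth elsewhere. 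On $Y_A\setminus Y_A[2]$ the cokernel of the (twisted) $\lambda_A$ is a line bundle; its reflexive hull $\zeta_A$ on all of $Y_A$ is a rank-one sheaf, not locally free along $Y_A[2]$, and self-adjointness provides a canonical multiplication $\zeta_A\otimes\zeta_A\to\mathcal O_{Y_A}$, an isomorphism away from $Y_A[2]$. I would then set $X_A:=\underline{\mathrm{Spec}}_{Y_A}\big(\mathcal O_{Y_A}\oplus\zeta_A\big)$ with this algebra structure: a finite degree-two map $\pi\colon X_A\to Y_A$, two-to-one over $Y_A\setminus Y_A[2]$ and one-to-one over $Y_A[2]$, whose local model along $Y_A[2]$ is the quotient $\mathbb{C}^2\to\mathbb{C}^2/\{\pm 1\}$ times a disc — so that, precisely because the reflexive sheaf $\zeta_A$ fails to be locally free there, $X_A$ comes out smooth for $A$ in a dense open subset of $\mathbb{LG}(\Lambda^3V)^0$.

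Now I would check that this smooth $X_A$ is an \issp fourfold. Since the branch surface $Y_A[2]$ has codimension two and the local covering involution is $-\mathrm{id}$ on a surface germ (hence acts through $SL_2$, so the cover is crepant), $K_{X_A}=\pi^*K_{Y_A}$, which is trivial by adjunction for the sextic ($\omega_{Y_A}\cong\mathcal O_{Y_A}(6-5-1)=\mathcal O_{Y_A}$). Pushing forward, $H^1(\mathcal O_{X_A})=H^1(\mathcal O_{Y_A})\oplus H^1(Y_A,\zeta_A)=0$ (the first summand vanishes as for any hypersurface, the second by a Koszul-type computation with the resolution of $\zeta_A$), and a further Hodge-number computation gives $h^{2,0}(X_A)=1$ and $b_2(X_A)=23$; the generator $\sigma_{X_A}$ of $H^{2,0}$ has square trivialising $K_{X_A}$ and is therefore nowhere degenerate, and $X_A$ is simply connected (by a Lefschetz-type argument, or a posteriori from the degeneration below). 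Thus $X_A$ is an \issp fourfold; the value $b_2=23$ already excludes the Kummer $2$-type ($b_2=7$), the only other known four-dimensional deformation class.

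The last — and hardest — step is to identify the deformation class as \ktipo. Since $\mathbb{LG}(\Lambda^3V)^0$ is open in the irreducible variety $\mathbb{LG}(\Lambda^3V)$, all the $X_A$ lie in a single deformation family, so it suffices to understand one carefully chosen member. I would exhibit a one-parameter degeneration of the pair $(Y_A,X_A)$ — e.g. by specialising $A$ to a Lagrangian adapted to a direct-sum decomposition of $V$ — for which the special $X_A$ (or its natural symplectic resolution) is birational to $S^{[2]}$, with $S$ a $K3$ surface attached directly to the degenerate data; invoking that birational \issp manifolds are deformation equivalent (\cite{Huy01}) then fixes the deformation class for this $A$, and by connectedness of the family for every $A\in\mathbb{LG}(\Lambda^3V)^0$. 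Producing this degeneration, controlling the singularities of the degenerate $Y_A$ and $X_A$, and checking that the resulting symplectic resolution really is $S^{[2]}$ is where essentially all of the difficulty lies; everything preceding it is a careful but local study of symmetric degeneracy loci and their double covers.
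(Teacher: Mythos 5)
A preliminary remark: the paper does not prove this statement --- it is imported verbatim as \cite[Theorem 1.1]{OGr06} --- so there is no in-paper proof to compare against, and your outline has to be measured against O'Grady's actual argument. Its architecture you do reproduce correctly: the self-adjointness of $\lambda_A$ coming from the fact that both $A$ and $F_v\cong v\wedge\Lambda^2V$ are Lagrangian (so both kernels equal $A\cap F_v$), the computation $\det F=\mathcal{O}_{\PP(V)}(-6)$ making $Y_A$ a sextic, the double cover $X_A=\underline{\mathrm{Spec}}_{Y_A}(\mathcal{O}_{Y_A}\oplus\zeta_A)$ built from the symmetric multiplication on the cokernel sheaf, the local model along $Y_A[2]$ given by a rank-$3$ quadric cone with its $\C^2/\pm1$ double cover, and the consequent triviality of $K_{X_A}$ with $b_2=23$ ruling out the Kummer class. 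These are the right local computations.

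Two genuine gaps remain, and they carry essentially all of the theorem's content. First, you only obtain smoothness of $X_A$ ``for $A$ in a dense open subset of $\mathbb{LG}(\Lambda^3V)^0$'', whereas the theorem asserts it for \emph{every} $A\in\mathbb{LG}(\Lambda^3V)^0$; what is missing is the pointwise argument that the two defining conditions are exactly sufficient --- $Y_A[3]=\emptyset$ bounds the corank by $2$ everywhere, and the absence of decomposable tensors in $A$ is what forces the ternary quadratic form in your local normal form to be nondegenerate at each point of $Y_A[2]$. Your reading of $Gr(3,6)\cap\PP(A)=\emptyset$ as merely guaranteeing $Y_A\neq\PP(V)$ misattributes its role: as the Remark following the theorem in this paper records, a decomposable tensor in $A$ produces a singular $K3$ surface inside $X_A$, so this condition is precisely a smoothness condition. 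Second, the identification of the deformation class is stated as a plan, not carried out, and it is not a routine degeneration: O'Grady's proof hinges on the prior, nontrivial fact that for a general polarised $K3$ surface $S$ of genus $6$ (degree $10$) there is a Lagrangian $A(S)\in\mathbb{LG}(\Lambda^3V)^0$ with $X_{A(S)}\cong S^{[2]}$, the covering involution being the anti-symplectic involution on $S^{[2]}$ associated to the square-$2$ class $L-2\delta$; only then do connectedness of $\mathbb{LG}(\Lambda^3V)^0$ and the deformation invariance of birational \issp manifolds finish the argument as you describe. Without exhibiting that anchor family, the assertion that $X_A$ is of \ktipo is not established.
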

A polarization $h$ of a Double EPW-sextic $X_A$ is given by the pullback of the hyperplane section $\mathcal{O}_{Y_A}(1)$ of $Y_A$, a direct computation yields $h^2=2$.

\begin{oss}
Let us look a little into what can happen if the lagrangian $A$ contains some decomposable tensors. If $A$ contains an isolated decomposable tensor $l$, then the double EPW-sextic $X_A$ is singular along a $K3$ surface obtained as the double cover of the plane $P$ associated to $l$, ramified along the sextic $P\cap Y_A[2]$. There exists a symplectic resolution of singularities $\overline{X}_A \rightarrow X_A$ obtained by blowing up the singular $K3$ surface. Moreover, as proven in \cite[Claim 3.8]{OGr12}, the class of this exceptional divisor is orthogonal to the (now semiample) divisor of square 2 coming from the polarization of $Y_A$ and has square $-2$ and divisor $1$.
\end{oss}

We will be particularly interested in the following construction made by Ferretti \cite[Prop. 4.3]{Fer12}:
\begin{prop}\label{prop:fer}
Let $S_0\subset \mathbb{P}^3$ be a generic quartic with $k\leq 10$ nodes and no other singularities. Then there exists a smooth complex contractible space $U$ of dimension $20-k$ with a distinguished point $0$ and a family $\pi_X\,:\,\mathcal{X}\,\rightarrow\,U$ such that
\begin{itemize}\renewcommand{\labelitemi}{$\bullet$}
\item The central fibre $X_0$ is a symplectic resolution of $S_0^{[2]}$,
\item For generic $t$ the point $X_t$ is a symplectic resolution of a double EPW-sextic with $k$ singular $K3$ surfaces.
\end{itemize}
 
\end{prop}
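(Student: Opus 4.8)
\emph{Strategy and the lattice $N$.} The plan is to obtain $\pi_X\colon\mathcal{X}\to U$ by deforming a symplectic resolution $X_0$ of $S_0^{[2]}$ inside its universal deformation family $Def(X_0)$, keeping a fixed rank $k+1$ lattice $N$ algebraic. Let $\widetilde{S}_0\to S_0$ be the minimal resolution; for $S_0$ very general it is a $K3$ surface with $\NS(\widetilde{S}_0)=\langle 4\rangle\oplus\langle-2\rangle^{\oplus k}$, generated by the pull-back of $\mathcal{O}_{\PP^3}(1)$ and the $k$ pairwise orthogonal $(-2)$-curves over the nodes. As $S_0$ has only $A_1$-singularities, $S_0^{[2]}$ has symplectic singularities and admits a symplectic resolution $X_0$ (for instance a suitable birational model of $\widetilde{S}_0^{[2]}$); being birational to $\widetilde{S}_0^{[2]}$ it is of \ktipo. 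Inside $H^2(X_0,\Z)$ we single out the class $h$ of square $2$ and divisor $1$ induced by the quartic polarization (on $\widetilde{S}_0^{[2]}$ it is the difference of the square $4$ polarization and the half-diagonal $\delta$) and the classes $e_1,\dots,e_k$ of the exceptional divisors of $X_0\to S_0^{[2]}$, which are of square $-2$ and divisor $1$, pairwise orthogonal, and orthogonal to $h$; thus $N:=\langle h,e_1,\dots,e_k\rangle\cong\langle 2\rangle\oplus\langle-2\rangle^{\oplus k}$ embeds into $\NS(X_0)$, and for $S_0$ very general $\NS(X_0)\supsetneq N$, the extra rank coming from a square $4$ class. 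Symmetrically, if $A\in\mathbb{LG}(\Lambda^3V)$ has exactly $k$ isolated decomposable tensors in general position, the Remark in \Ref{ssec}{epw} together with O'Grady's analysis \cite{OGr12} shows that $X_A$ is singular along $k$ disjoint $K3$ surfaces and that its symplectic resolution $\overline{X}_A$ is of \ktipo\ with $N\subseteq\NS(\overline{X}_A)$, equality holding for $A$ generic.

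\emph{The family.} By Local Torelli (\Ref{thm}{local_torelli}) the period map identifies $Def(X_0)$ with an open subset of the period domain of $H^2(X_0,\Z)$. The locus $U$ of those deformations keeping every class of $N$ of type $(1,1)$ is identified with an open subset of $\Omega_{N^\perp}$, where $N^\perp$ is the orthogonal complement of $N$ in $H^2(X_0,\Z)$; it is smooth and contractible of dimension $\rk(N^\perp)-2=(22-k)-2=20-k$. Letting $\pi_X\colon\mathcal{X}\to U$ be the restriction of the universal family glued out of the local deformations (as after \Ref{thm}{local_torelli}), we get $\mathcal{X}_0\cong X_0$ and $N\subseteq\NS(\mathcal{X}_t)$ for all $t$. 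The period point of $X_0$ lies in the sub-locus $\Omega_{\NS(X_0)^\perp}\subsetneq\Omega_{N^\perp}$, of positive codimension; away from a countable union of such proper sub-loci one has $\NS(\mathcal{X}_t)=N$, so this fails only on a proper closed analytic subset $W\subsetneq U$.

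\emph{Identifying the general fibre; the role of $k\le 10$.} For $t\in U\setminus W$ the fibre $\mathcal{X}_t$ is a \ktiposp manifold with $\NS(\mathcal{X}_t)=\langle 2\rangle\oplus\langle-2\rangle^{\oplus k}$, carrying the class $h$ of square $2$ and divisor $1$. Passing to the birational model on which $h$ is relatively nef, one applies the characterisation of symplectic resolutions of double EPW-sextics among \ktiposp manifolds carrying such a class (following O'Grady; cf.\ also \cite{Add14}) to obtain $\mathcal{X}_t\cong\overline{X}_{A(t)}$ for some $A(t)$ with exactly $k$ decomposable tensors in general position, the $e_i$ being the classes of the $k$ exceptional divisors. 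The hypothesis $k\le 10$ enters here, in the bookkeeping that makes this identification unambiguous: by Nikulin's embedding theorems the primitive embedding of $N$ into the even unimodular lattice $\Lambda_{24}$ is unique up to isometry exactly when $l(A_N)+2\le\rk\Lambda_{24}-\rk N$, which reads $(k+1)+2\le 24-(k+1)$, i.e.\ $k\le 10$; together with Markman's characterisation of parallel transport operators for \ktiposp manifolds \cite{Mar11}, this puts $[X_0]$ and the $\overline{X}_A$ in the same connected component of the moduli of $N$-polarized \ktiposp manifolds, so the EPW-sextic resolutions are dense in $U$ and $\mathcal{X}_t$ is as claimed for generic $t$.

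\emph{Main obstacle.} The step I expect to be hardest is the interplay of the two geometric descriptions with the lattice theory: precisely computing $\NS(X_0)$ with all divisibilities — so that $N$ is seen to be a genuinely non-saturated sublattice, which is what confines $[X_0]$ to the wall $W$ — and, on the EPW side, O'Grady's fine description in \cite{OGr12} of the singular locus and its symplectic resolution when the Lagrangian carries several decomposable tensors in general position, together with the fact (the stratified analogue of the dominance of the EPW period map) that these resolutions sweep out a full $(20-k)$-dimensional space of periods. A subsidiary technical point is to control the birational model in the previous step, so that the general fibre is literally a resolution $\overline{X}_{A(t)}$ rather than merely birational to one; an alternative route avoiding this is to build $\mathcal{X}\to U$ directly from O'Grady's degeneration of double EPW-sextics to the Hilbert squares $S^{[2]}$ of quartic surfaces, with $X_0$ appearing as a special fibre after simultaneous resolution.
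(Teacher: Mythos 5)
This proposition is not proved in the paper at all: it is quoted verbatim from Ferretti \cite[Prop.\ 4.3]{Fer12}, so there is no in-paper argument to compare yours against. Judged on its own, your sketch sets up a reasonable deformation-theoretic skeleton (the lattice $N\cong\langle 2\rangle\oplus\langle-2\rangle^{\oplus k}$, the dimension count $\dim\Omega_{N^\perp}=20-k$, and the Noether--Lefschetz argument showing $\NS(\mathcal{X}_t)=N$ off a countable union of proper subloci), but the decisive step is missing rather than proved: the assertion that the generic $N$-polarized period point is realised by a symplectic resolution of a double EPW-sextic singular along exactly $k$ $K3$ surfaces. This is exactly the content of O'Grady's analysis in \cite{OGr12} of Lagrangians containing decomposable tensors, together with the dominance of the EPW period map onto this Noether--Lefschetz stratum; invoking ``the characterisation of symplectic resolutions of double EPW-sextics carrying such a class'' at that point assumes the theorem one is trying to prove. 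You do flag this honestly as the main obstacle, but as written the argument reduces the proposition to itself.

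Two further points. First, your explanation of the hypothesis $k\le 10$ via Nikulin's uniqueness criterion for the embedding $N\hookrightarrow\Lambda_{24}$ is a numerical coincidence rather than the operative mechanism: in Ferretti's construction the bound guarantees that quartics with $k$ nodes in general position exist and that the nodes impose independent conditions (equivalently, that the $k$ decomposable tensors in the Lagrangian can be taken in general position), which is a geometric general-position statement, not a lattice-embedding one. Second, the existence and precise description of the symplectic resolution $X_0$ of $S_0^{[2]}$ for a nodal quartic, and the identification of the classes $e_i$ with their squares and divisibilities, also requires an argument (your parenthetical ``a suitable birational model of $\widetilde{S}_0^{[2]}$'' hides a nontrivial comparison of birational models). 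The alternative route you mention at the end --- building the family directly from the EPW side by degenerating the Lagrangian --- is in fact closer to what Ferretti actually does, and avoids the birational-model ambiguity you worry about.
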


\subsection{Moduli spaces of stable objects}\label{ssec:stab_obj}
In this section we recall basic definitions and facts about moduli spaces of sheaves and Bridgeland stable objects on $K3$ surfaces. For a more detailed treatment of the latter the interested reader is referred to \cite{Bri08}.

Let $S$ be a projective $K3$ surface. Mukai defined a lattice structure on $H^*(S,\Z)$ by setting
\[(r_1,l_1,s_1).(r_2,l_2,s_2):=l_1\cdot l_2-r_1s_2-r_2s_1,\]
where $r_i\in H^0,$ $l_i\in H^2$ and $s_i\in H^4$. This lattice is referred to as the \em Mukai lattice \em and we call vectors $v\in H^*(S,\Z)$ \em Mukai vectors. \em The Mukai lattice is isometric to $\Lambda_{24}$.

Furthermore we may introduce a weight-two Hodge structure on $H^*(S,\Z)$ by defining the $(1,1)$-part to be
\[H^{1,1}(S)\oplus H^0(S)\oplus H^4(S).\]

For an object $\mc{F}\in D^b(S)$ we define the \em Mukai vector of $\mc{F}$ \em by
\[v(\mc{F}):=\ch(\mc{F})\sqrt{\td_S}=(\rk\mc{F},c_1(\mc{F}),\ch_2(\mc{F})+\rk\mc{F}).\]
It is of $(1,1)$-type and satisfies
\renewcommand{\labelitemi}{$\bullet$}
\begin{itemize}
\item  $r>0$ or
\item $r=0$ and $l\neq0$ effective or
\item $r=l=0$ and $s>0.$
\end{itemize}
\begin{defn}
A non-zero vector $v\in H^*(S,\Z)$ satisfying $v^2\geq 2$ and the conditions above is called a \em positive \em Mukai vector.
\end{defn}

With this definition we can easily deduce:

\begin{lem}\label{lem:posvec}
Let $v\in H^*(S,\Z)$ be non-zero and of $(1,1)$-type satisfying $v^2\geq 2$. Then either $v$ or its negative is a positive Mukai vector.
\end{lem}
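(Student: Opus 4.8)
The plan is to argue directly from the definition of positive Mukai vector by a short case analysis on the rank $r$ of $v=(r,l,s)$, using the sign-change symmetry $v\mapsto -v$ to reduce to the ``correct'' half of each case. Since $v^2\geq 2$ is invariant under $v\mapsto -v$, the only thing to check is that exactly one of $v,-v$ satisfies the positivity conditions ($r>0$; or $r=0$, $l\neq 0$ effective; or $r=l=0$, $s>0$).

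First I would dispose of the case $r\neq 0$: replacing $v$ by $-v$ if necessary, we may assume $r>0$, and then $v$ is a positive Mukai vector because $v^2\geq 2$ holds by hypothesis and no condition on $l$ or $s$ is imposed. Next, suppose $r=0$, so $v=(0,l,s)$. Then $v^2=l^2$. If $l\neq 0$, I would invoke the fact that on a $K3$ surface every nonzero $(1,1)$-class $l$ with $l^2\geq 0$ is, up to sign, effective — more precisely, since $l^2=v^2\geq 2>0$, Riemann--Roch gives $\chi(\mathcal{O}_S(l))=2+\tfrac12 l^2>0$, so either $l$ or $-l$ is effective, and exactly one of them is (an effective nonzero class has positive intersection with an ample class, which pins down the sign). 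Choosing the sign of $v$ so that $l$ is effective makes $v$ positive. Finally, if $r=l=0$ then $v=(0,0,s)$ with $s\neq 0$ (as $v\neq 0$); here $v^2=0$, which contradicts $v^2\geq 2$, so this sub-case does not actually occur — but even formally, choosing the sign with $s>0$ would make $v$ positive.

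The only genuinely non-formal input is the statement that on a projective $K3$ surface a nonzero $(1,1)$-class $l$ with $l^2\geq -2$ (in particular $l^2\geq 2$) is $\pm$-effective; this is classical and follows from Riemann--Roch together with Serre duality ($H^2(\mathcal{O}_S(l))=H^0(\mathcal{O}_S(-l))^\vee$), so $\chi>0$ forces $h^0(\mathcal{O}_S(l))+h^0(\mathcal{O}_S(-l))>0$. I expect this to be the main (and only) obstacle, and it is very mild. Uniqueness of the sign in all cases is immediate: the three positivity clauses are mutually exclusive and each is incompatible with its own negation (positive rank, effective nonzero class, positive number), so $v$ and $-v$ cannot both be positive. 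This completes the proof.
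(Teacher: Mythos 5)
Your proof is correct; the paper states this lemma without proof (``we can easily deduce''), and your case analysis --- trivial after a sign change when $r\neq 0$, Riemann--Roch plus Serre duality to get $\pm$-effectivity of the nonzero $(1,1)$-class $l$ when $r=0$ (using $l^2=v^2\geq 2$), and the observation that $r=l=0$ forces $v^2=0$, contradicting the hypothesis --- is exactly the intended argument. No gaps.
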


Let us now review some results on the birational geometry of moduli spaces of bridgeland stable objects on a $K3$ surface. Let $S$ be a projective $K3$ surface and fix two classes $\beta, \omega\in \mathrm{NS}(S)_{\mathbb{R}}$ with $\omega$ ample. To this data Bridgeland associates a stability condition $\tau:=\tau_{\beta,\omega}$ on the derived category $D^b(S).$ The set of all such stability conditions $\tau_{\beta,\omega}$ is denoted by $\mathrm{Stab}(S).$ Next, we fix a primitive positive Mukai vector $v\in H^\ast(S,\mathbb{Z})$ and assume that $\tau$ is generic with respect to $v.$ The coarse moduli space $M_\tau(v)$ of $\tau$-stable objects of Mukai vector $v$ is a projective manifold of \kntiposp (\cite[Thm.\ 5.9]{BM12}) and we have an isometry of weight-two Hodge structures
\[H^2(M_\tau(v),\mathbb{Z})\stackrel{\sim}\rightarrow v^\perp\subset H^\ast(S,\mathbb{Z}).\]
Bayer and Macri studied the birational geometry of these moduli spaces; in particular, they introduced a chamber structure on $\mathrm{Stab}(S).$ We summarise their results:
\begin{thm}\cite[Thm.\ 1.1a) and Thm.\ 1.2]{BM13}\label{thm:bmstabsurj}
\begin{enumerate}
 \item If $\tau$ and $\tau'$ are generic stability conditions with respect to $v,$ then $M_\tau(v)$ and $M_{\tau'}(v)$ are birational.
 \item There is a surjective map
 \[l\colon\mathrm{Stab}(S)\rightarrow \mathrm{Mov}(M_\tau(v))\]
 mapping every chamber of $\mathrm{Stab}(S)$ onto a \kahl-type chamber such that for a generic $\tau'$ the moduli space $M_{\tau'}(v)$ is the birational model of $M_\tau(v)$ corresponding to the chamber containing $l(\tau').$
\end{enumerate}
\end{thm}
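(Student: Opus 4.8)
The plan is to build everything on a single analytic-geometric input, the \emph{positivity lemma}: to each stability condition $\tau=\tau_{\beta,\omega}$ that is generic with respect to $v$ one attaches a real divisor class $\ell_\tau$ on $M_\tau(v)$, characterised by its intersection with curves. Concretely, if $C\subset M_\tau(v)$ is an integral curve and $\mc{E}_C$ is the restriction of a (quasi-)universal family to $C\times S$, one sets $\ell_\tau\cdot C=\mathrm{Im}\bigl(-Z(v(\Phi_{\mc{E}_C}(\mc{O}_C)))/Z(v)\bigr)$, where $Z$ is the central charge of $\tau$ and $\Phi_{\mc{E}_C}$ is the associated Fourier--Mukai transform. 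First I would prove that this formula is well defined (independent of the choice of (quasi-)universal family, using primitivity of $v$), so that it determines a real divisor class $\ell_\tau$ whose image under the Mukai isometry $\theta_\tau\colon H^2(M_\tau(v),\Z)\xrightarrow{\sim}v^\perp$ can be written down explicitly in terms of $Z$ and $v$. The heart of the matter is to show that $\ell_\tau$ is \emph{nef}, and moreover that $\ell_\tau\cdot C=0$ if and only if the objects parametrised by $C$ are all $S$-equivalent; nefness comes from the fact that, for a family of $\tau$-semistable objects of class $v$, the phase inequalities force the imaginary part above to be non-negative, while the characterisation of the vanishing locus uses the Jordan--Hölder filtrations along the wall.

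Granting the positivity lemma, part (a) follows from the connectedness of $\mathrm{Stab}(S)$. The set of stability conditions that are non-generic with respect to $v$ is a locally finite union of real codimension-one \emph{walls}, so any two generic $\tau,\tau'$ can be joined by a path crossing finitely many walls transversally. Within a chamber the set of $\tau$-stable objects is constant, so $M_\tau(v)$ does not change; when the path crosses a wall $\tau_0$ the moduli space is modified, and I would show this modification is birational using $\ell_{\tau_0}$: the positivity lemma makes $\ell_{\tau_0}$ a nef class that is strictly positive off the locus of curves of $S$-equivalent objects, hence it is the pullback of an ample class under a birational contraction $M_{\tau_0^{\pm}}(v)\to\overline{M}$, and the two adjacent chambers produce two such contractions of a common target, giving the desired birational map between $M_\tau(v)$ and $M_{\tau'}(v)$.

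For part (b) I would define $l$ by sending $\tau$ to $\ell_\tau$, checking via local Torelli and continuity of the construction that $\ell_\tau$ is ample for generic $\tau$, so that $l$ carries each chamber of $\mathrm{Stab}(S)$ onto an open chamber of the movable cone $\mathrm{Mov}(M_\tau(v))$. Identifying the walls on the two sides is the crucial compatibility: a wall in $\mathrm{Stab}(S)$ is governed by a rank-two sublattice of the Mukai lattice containing $v$ together with a spherical or isotropic class, and the same numerical data cut out the walls of $\mathrm{Mov}(M_\tau(v))$ via Markman's description of the movable cone for manifolds of \kntipo. To prove surjectivity I would start from an arbitrary class in $\mathrm{Mov}(M_\tau(v))$, transport it by $\theta_\tau$ to a class in $v^\perp\otimes\R$, and realise it as $\ell_{\tau'}$ for a suitable $\tau'$ by solving for a central charge with the prescribed imaginary part; a boundary/limit argument then shows that every wall of the movable cone is attained, so that the two chamber decompositions match and $l$ is onto.

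The main obstacle is the positivity lemma itself, and specifically the sharp statement that $\ell_\tau\cdot C=0$ exactly on $S$-equivalence. Nefness is the formal part, but the ``only if'' direction --- that a curve on which $\ell_\tau$ vanishes must parametrise $S$-equivalent objects --- requires a genuine analysis of the limit behaviour of the central charge and of the Harder--Narasimhan and Jordan--Hölder filtrations as one approaches the wall, and it is this that pins down which contraction the wall induces (isomorphism, flop, or divisorial contraction). The second delicate point is the surjectivity in (b): matching the locally finite wall structure of $\mathrm{Stab}(S)$ with that of $\mathrm{Mov}(M_\tau(v))$ relies on the full lattice-theoretic classification of walls together with Markman's monodromy results, and the limit argument realising boundary classes of the movable cone must be handled with care so that no chamber is missed.
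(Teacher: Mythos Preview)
The paper does not prove this theorem at all: it is stated purely as a citation of \cite[Thm.\ 1.1a) and Thm.\ 1.2]{BM13}, with no accompanying proof or sketch. There is therefore nothing in the paper to compare your proposal against.

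That said, what you have written is a reasonable outline of the actual Bayer--Macr\`i argument. The positivity lemma you describe is precisely the main result of \cite{BM12}, and the wall-crossing analysis and the lattice-theoretic matching of walls in $\mathrm{Stab}(S)$ with walls of $\mathrm{Mov}(M_\tau(v))$ is the content of \cite{BM13}. Your identification of the two genuine difficulties (the sharp $S$-equivalence characterisation in the positivity lemma, and the surjectivity via the classification of walls together with Markman's monodromy input) is accurate. One small caveat: the map $l$ in \cite{BM13} is defined on a connected component $\mathrm{Stab}^\dagger(S)$ rather than on all of $\mathrm{Stab}(S)$, and surjectivity onto the movable cone is established chamber by chamber rather than by a direct limit argument on boundary classes; but these are refinements of your sketch rather than corrections.
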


Note that for every positive Mukai vector at least one chamber in $\mathrm{Stab}(S)$ contains stability conditions $\tau_{\beta,\omega}$ whose stable objects are (up to a shift) stable sheaves in the sense of Gieseker.

Let $f\colon H^2(M_\tau(v),\Z)\rightarrow N$ be a marking and denote by $\mc{P}$ the period map (restricted to a connected component of the moduli space of marked manifolds). The above theorem implies, in particular, that every manifold in the fibre $\mc{P}^{-1}(\mc{P}(M_\tau(v),f)),$ is again a moduli space of stable objects on $S$ with the same Mukai vector $v.$

We even have the following stronger result:
\begin{cor}\label{cor:hodge_moduli_space}
Let $X$ and $X'$ be Hodge isometric manifolds of \kntipo. Then $X$ is a moduli space of stable objects on a $K3$ surface if and only if the same holds for $X'$.
\end{cor}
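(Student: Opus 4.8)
The plan is to reduce the statement to \Ref{thm}{bmstabsurj} together with the Global Torelli theorem (\Ref{thm}{global_torelli}) and Markman's description of parallel transport operators. Suppose $X = M_\tau(v)$ is a moduli space of stable objects on a $K3$ surface $S$ for some primitive positive Mukai vector $v$ and some generic stability condition $\tau$, and let $\phi\colon H^2(X,\Z)\to H^2(X',\Z)$ be a Hodge isometry. First I would observe that, by \Ref{thm}{global_torelli} combined with Markman's criterion, the existence of a Hodge isometry does not yet guarantee $X$ and $X'$ are birational: one needs $\phi$ (or some modification of it) to be a parallel transport operator, \ie to respect the canonical embedding class $\iota_X = \phi\circ\iota_{X'}$ in $O(L,\Lambda_{24})/O(\Lambda_{24})$. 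So the first real step is to arrange this: compose $\phi$ with an element of $\mathrm{Mon}^2(X')$ so that the resulting Hodge isometry $\psi$ is a parallel transport operator. This is possible precisely because the orbit space $O(L,\Lambda_{24})/O(\Lambda_{24})$ has order $2^r$ and the relevant monodromy group acts transitively on the fibres of the map to this orbit space — here one may need the hypothesis that $X'$ is of $K3^{[n]}$-type to invoke the full strength of Markman's results, or to note that $\mathrm{Mon}^2$ surjects onto the isometries with trivial action on the discriminant form.

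Once $\psi$ is a parallel transport operator, \Ref{thm}{global_torelli} yields a birational map $\chi\colon X\dashrightarrow X'$. The second step is to transport the moduli-space structure across this birational map. Since $X = M_\tau(v)$, part (2) of \Ref{thm}{bmstabsurj} says that every birational model of $M_\tau(v)$ — in particular $X'$, being birational to $X$ — is itself of the form $M_{\tau'}(v)$ for a generic stability condition $\tau'$ on the \emph{same} surface $S$ with the \emph{same} Mukai vector $v$, via the surjection $l\colon\mathrm{Stab}(S)\to\mathrm{Mov}(M_\tau(v))$ onto the \kahl-type chambers and the wall-and-chamber analysis. Concretely, $\chi$ identifies the movable cone of $X$ with that of $X'$, the class of the \kahl cone of $X'$ lands in some chamber of $\mathrm{Mov}(M_\tau(v))$, and choosing $\tau'$ in the preimage chamber under $l$ produces a moduli space $M_{\tau'}(v)$ which is isomorphic (not merely birational) to $X'$. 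Therefore $X'$ is a moduli space of stable objects on a $K3$ surface, as claimed; the converse implication is symmetric.

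The main obstacle I anticipate is the first step: making $\phi$ into a genuine parallel transport operator. A Hodge isometry between two manifolds of $K3^{[n]}$-type need not be one, because of the $2^r$-fold ambiguity in the embedding class into $\Lambda_{24}$, and one must check that composing with a monodromy operator of $X'$ can always kill this discrepancy — equivalently that $\mathrm{Mon}^2(X')$ acts transitively on $O(L,\Lambda_{24})/O(\Lambda_{24})$. This should follow from Markman's explicit generators for the monodromy group, but it is the point where care is needed; everything after that is a fairly mechanical application of \Ref{thm}{global_torelli} and \Ref{thm}{bmstabsurj}. A secondary subtlety is ensuring that $\psi$ preserves the orientation/positivity data required by the Global Torelli statement, which can again be fixed by composing with $-\id$ or a reflection if necessary, since those lie in $\mathrm{Mon}^2$ up to sign conventions.
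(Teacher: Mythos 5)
Your second step---transporting the moduli-space structure across a birational map via \Ref{thm}{bmstabsurj}---is sound and matches what the paper does, but your first step contains a genuine error, and it is exactly the point you flagged as needing care. You cannot in general compose the Hodge isometry $\phi$ with an element of $\mathrm{Mon}^2(X')$ to produce a parallel transport operator: by Markman's theorem the class $\iota_{X'}\in O(L,\Lambda_{24})/O(\Lambda_{24})$ is \emph{canonically} attached to $X'$, i.e.\ it is preserved by the monodromy group, so post-composing $\phi$ with monodromy operators never changes the class of $\iota_{X'}\circ\phi$. In particular $\mathrm{Mon}^2(X')$ does not act transitively on the orbit space; if it did, any two Hodge isometric manifolds of \kntiposp would be birational, which is precisely what the $2^r$-fold ambiguity rules out once $n-1$ has a prime factor. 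So whenever $\iota_X\neq\iota_{X'}\circ\phi$ in the orbit space, your argument produces no birational map $X\dashrightarrow X'$ and the proof halts.

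The paper's proof circumvents this by never attempting to make $X$ and $X'$ birational. It instead notes that the connected components of the marked moduli space meeting the given period point correspond to the finitely many choices of a primitive Mukai vector $v_i$ of square $2n-2$ in the algebraic part of the Mukai lattice of the \emph{same} surface $S$, whose orthogonal complements $v_i^\perp$ are all Hodge isometric to $H^2(X,\Z)$ but are pairwise non-conjugately embedded in $\Lambda_{24}$; the resulting $M_{\tau_i}(v_i)$ are Hodge isometric, pairwise non-birational moduli spaces realising every embedding class. A given $X'$ is then birational to the one whose embedding class matches $\iota_{X'}\circ\phi$, and only at that point does \Ref{thm}{bmstabsurj} apply as in your second step. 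To repair your write-up, replace ``compose $\phi$ with a monodromy operator'' by ``replace the Mukai vector $v$ by the vector $v_i$ on $S$ whose associated embedding class agrees with that of $X'$''.
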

\begin{proof}
Let $X$ be a moduli space of stable objects on a $K3$. We study the corresponding fibre of the period map. The number of connected components (neglecting the additional components that we can reach by composing with $-\id$) of the moduli space of marked manifolds is in one-to-one correspondence to the number of choices for Mukai vectors $\{v_i\}_i$ such that for generic stability conditions $\tau_i$ the moduli spaces $M_{\tau_i}(v_i)$ are Hodge isometric. The corresponding embeddings of $H^2(M_{\tau_i}(v_i))$ into $\Lambda_{24}$ are pairwise not conjugate, therefore the moduli spaces are pairwise not birational. Thus, by the theorem above, all manifolds Hodge isometric to $X$ are moduli spaces.
\end{proof}

\begin{rmk}\label{rmk:mod_stab_obj_ab}
A very similar construction can be done in the case when $A$ is an abelian surface. Again, $M_\tau(v)$ is a projective manifold and the fibre $K_\tau(v)$ of the Albanese map $M_\tau(v)\rightarrow A\times \hat{A}$ is of Kummer $n$-type (\cite[Thm.\ 1.9]{Yos12}). Again, the second cohomology of $K_\tau(v)$ is Hodge isometric to $v^\perp$ and we have an analogous result as in \Ref{thm}{bmstabsurj}. The only important difference is the following: By \cite[Lem.\ 3]{Shi78} for every $2$-torus $A$ there is a Hodge isometry $g$ to its dual $\hat{A}$. Thus the moduli space of marked $2$-tori has four connected components (corresponding to $(A,f),$ $(A,f\circ -\id),$ $(\hat{A},f\circ g)$ and $(\hat{A},f\circ g\circ -\id)$, where $f$ is some marking of $A$). For every Mukai vector $v=(r,l,s)\in H^*(A,\Z)$ we define its dual as $\hat{v}:=(r,g_*l,s).$ We see immediately that $v$ is positive if and only if $\hat{v}$ is positive and the corresponding Albanese fibres $K_\tau(v)$ and $K_{\hat{\tau}}(\hat{v})$ are Hodge isometric. (Here $\hat{\tau}$ is the 'dual' stability condition on $\hat{A}$ defined in the obvious way.) Note that in general $K_\tau(v)$ and $K_{\hat{\tau}}(\hat{v})$ are not birational (\cite{Nam02}) but we, again, see that the moduli space of marked manifolds of Kummer $n$-type has (at least) four components. Summarising, we can state that the above corollary holds also for manifolds of Kummer $n$-type if $n+1$ is a prime power.
\end{rmk}

\subsection{Induced Automorphisms on Moduli Spaces}\label{ssec:ind_aut}
We want to study automorphisms on the moduli spaces described above. Thus we include a direct generalisation of Section 3 in \cite{OW13} to the case of moduli of Bridgeland stable objects. Let $S$ be a projective $K3$ surface, $v$ be a positive Mukai vector, $\tau=\tau_{\beta,\omega}$ a $v$-generic stability condition and $\varphi$ an automorphism of $S$.

\begin{prop}\label{prop:exist_induced}
Keep notation as above and assume that $v$, $\beta$ and $\omega$ are $\varphi$-invariant. Then $\varphi$ induces an automorphism $\widehat{\varphi}$ of the moduli space $M_\tau(v).$ If $\tau$ is not $\varphi$-invariant, we at least get a birational selfmap of $M_\tau$.
\end{prop}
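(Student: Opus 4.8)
The plan is to realise $\widehat\varphi$ as the action on the moduli problem of the derived autoequivalence $\Phi:=\varphi_*\colon D^b(S)\to D^b(S)$. I would first record the standard properties of $\Phi$: it is an exact autoequivalence whose inverse is $(\varphi^{-1})_*$; it acts on the Mukai lattice $H^*(S,\Z)$ by a Hodge isometry, and this cohomological action is compatible with the induced action of $\Phi$ on $\mathrm{Stab}(S)$ through the central charges (see \cite{Bri08}). Moreover, since the stability condition $\tau_{\beta,\omega}$ is constructed functorially out of the pair $(\beta,\omega)\in\NS(S)_\R\times\NS(S)_\R$ --- by tilting $\mathrm{Coh}(S)$ at the torsion pair determined by $\omega$-slope stability and twisting the central charge by $\exp(\beta+i\omega)$ --- one has $\Phi\cdot\tau_{\beta,\omega}=\tau_{\varphi_*\beta,\,\varphi_*\omega}$.

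Next I would verify that $\Phi$ preserves all the data defining $M_\tau(v)$. Since $v$ is $\varphi$-invariant, $\Phi$ maps every object of Mukai vector $v$ to an object of Mukai vector $\Phi(v)=v$. Since $\beta$ and $\omega$ are $\varphi$-invariant, the identity above yields $\Phi\cdot\tau=\tau_{\varphi_*\beta,\varphi_*\omega}=\tau$; being an exact equivalence, $\Phi$ then sends $\tau$-stable objects to $(\Phi\cdot\tau)$-stable objects of the same phase, hence again to $\tau$-stable objects. Therefore $\Phi$ restricts to a bijection on the set of isomorphism classes of $\tau$-stable objects of Mukai vector $v$, with inverse induced by $(\varphi^{-1})_*$.

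It remains to upgrade this bijection to an automorphism of the coarse moduli space. Here I would use that $M_\tau(v)$ is a coarse moduli space for $T$-flat families of $\tau$-stable objects of Mukai vector $v$ (\cite{BM12}): applying $(\id_T\times\varphi)_*$ to such a family produces another one (flatness and fibrewise stability are preserved because $\varphi$ is an isomorphism and $\Phi$ preserves stability in each fibre) in a way compatible with base change, so $\Phi$ defines an automorphism of this moduli problem and hence a morphism $\widehat\varphi\colon M_\tau(v)\to M_\tau(v)$ admitting the morphism induced by $(\varphi^{-1})_*$ as a two-sided inverse. I expect this step --- in particular, handling the case where $M_\tau(v)$ is not fine, where one runs the argument on the moduli stack (or with a twisted quasi-universal family) of \cite{BM12} --- to be the only point that needs genuine care; the rest is formal. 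This is precisely the generalisation of \cite[Section~3]{OW13} from Gieseker stability to Bridgeland stability announced above.

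For the last assertion I would drop the hypothesis that $\beta$ and $\omega$ are $\varphi$-invariant, keeping only that $v$ is. Then $\tau':=\Phi\cdot\tau=\tau_{\varphi_*\beta,\varphi_*\omega}$ need not coincide with $\tau$, but the argument above still produces an isomorphism $M_\tau(v)\cong M_{\tau'}(v)$. As $\Phi$ is an isometry of $H^*(S,\Z)$ fixing $v$, it permutes the walls for $v$ inside $\mathrm{Stab}(S)$, so $\tau'$ is again generic with respect to $v$; hence \Ref{thm}{bmstabsurj}(1) shows that $M_{\tau'}(v)$ is birational to $M_\tau(v)$, and composing this birational map with the isomorphism above gives a birational selfmap of $M_\tau(v)$.
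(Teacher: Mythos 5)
Your proof is correct and follows essentially the same route as the paper, which likewise reduces the statement to checking that the autoequivalence attached to $\varphi$ preserves the moduli functor of $\tau$-stable objects of Mukai vector $v$ and then invokes the analogue of [OW13, Prop.~3.1] for the formal upgrade to the coarse moduli space. Your write-up is in fact more careful than the paper's one-line argument: you make explicit that $\Phi$ a priori sends $\tau$-stable objects to $(\Phi\cdot\tau)$-stable ones, so that the invariance of $\beta$ and $\omega$ (resp.\ the wall-and-chamber results of Bayer--Macr\`i) is genuinely what yields the regular (resp.\ birational) conclusion.
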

\begin{proof}
This is the analogue of \cite[Prop.\ 3.1]{OW13}. We only need to check that the pullback along $\varphi$ induces an automorphism of the moduli functor. Indeed, from the definition of stability, it is clear that if an object $\mc{F}\in D^b(S)$ is $\tau$-stable, so is $\varphi^*\mc{F}$.
\end{proof}

\begin{rmk}
Note that we will typically consider a surface $S$ together with a non-symplectic automorphism $\varphi$ fixing $\NS(S)$. In this case all stability conditions are $\varphi$-invariant.
\end{rmk}

In order to be able to study the induced action of $\widehat{\varphi}$ on the second cohomology we need the following:

\begin{lem}\label{lem:equivariant_induced_iso}
The isomorphism of Hodge structures
\[H^2(M_\tau(v),\Z)\rightarrow v^\perp\]
is $(\varphi,\widehat{\varphi})$-equivariant. In particular, we have
\[ H^2(M_\tau(v),\Z)^{\widehat{\varphi}}\cong (v^\perp)^\varphi.\]
\end{lem}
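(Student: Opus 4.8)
The plan is to trace through the construction of the Hodge isometry $H^2(M_\tau(v),\Z)\xrightarrow{\sim}v^\perp\subset H^*(S,\Z)$ and check that it is compatible with the two pullback actions. Recall that this isomorphism is not abstract: it is realised geometrically (via Mukai, O'Grady, Yoshioka, Bayer--Macr\`i) by means of a (quasi-)universal family $\mc{E}$ on $S\times M_\tau(v)$, the induced map on cohomology being $\theta_v(w)=\left[p_{M*}\bigl(p_S^*(w^\vee\cdot\sqrt{\td_S})\cdot\ch(\mc{E})\bigr)\right]_{2}$ for $w\in v^\perp$, i.e. a Fourier--Mukai-type correspondence with kernel the Mukai vector of $\mc{E}$. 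So the first step is to recall this description precisely and fix notation.

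Next I would exploit the fact, established in the proof of \Ref{prop}{exist_induced}, that $\widehat{\varphi}$ is induced by pulling back stable objects along $\varphi$: concretely, $(\id_S\times\widehat{\varphi})^*\mc{E}$ is (quasi-)universal for the family $\varphi^*(\cdot)$, and since $\varphi^*$ is an autoequivalence of $D^b(S)$ fixing $v$, one gets an identification of $(\varphi^{-1}\times\id)^*\mc{E}$ with $(\id\times\widehat{\varphi})^*\mc{E}$ up to a line bundle pulled back from $M_\tau(v)$ (the ambiguity in the quasi-universal family). Such a twist does not affect $\theta_v$ on the \emph{primitive} cohomology $v^\perp$ because $\ch$ of a line bundle pulled back from $M_\tau$ interacts with the $p_{M*}$ only through degree-shifting terms that land outside $H^2$, or more cleanly because $\theta_v$ is canonically determined by Markman's results independently of the choice. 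The key identity to extract is then the commutativity of
\[
\begin{array}{ccc}
v^\perp & \xrightarrow{\ \varphi^*\ } & v^\perp\\[2pt]
\big\downarrow{\scriptstyle\theta_v^{-1}} & & \big\downarrow{\scriptstyle\theta_v^{-1}}\\[2pt]
H^2(M_\tau(v),\Z) & \xrightarrow{\ \widehat{\varphi}^*\ } & H^2(M_\tau(v),\Z),
\end{array}
\]
which is precisely the asserted $(\varphi,\widehat{\varphi})$-equivariance once one is careful about whether $\theta_v$ intertwines $\varphi^*$ or $(\varphi^{-1})^*$ — a bookkeeping point to settle by checking on an explicit class (e.g. taking $\mc{E}$ to be a sheaf in the Gieseker chamber, where $\theta_v$ is the classical Mukai morphism and the equivariance is standard, cf. \cite{OW13}). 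The final statement $H^2(M_\tau(v),\Z)^{\widehat{\varphi}}\cong(v^\perp)^{\varphi}$ is then immediate: an isometry intertwining two actions identifies the corresponding fixed sublattices.

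The main obstacle I anticipate is the non-uniqueness of the universal family: on a fine moduli space $\mc{E}$ is unique only up to tensoring with a line bundle from $M_\tau(v)$, and in the quasi-universal case one has an additional multiplicity factor. One must verify that the natural isomorphism $(\varphi\times\id)^*\mc{E}\cong(\id\times\widehat{\varphi})^*\mc{E}$ exists only after such a twist, and that the induced cohomological correspondence $\theta_v$ is insensitive to it on $v^\perp$. The cleanest way around this is probably not to argue with $\mc{E}$ at all but to invoke the canonicity in \cite{Mar11}/\cite{BM13}: the isometry $H^2(M_\tau(v),\Z)\xrightarrow{\sim}v^\perp$ is canonically attached to the pair $(S,v)$, hence functorial for any autoequivalence fixing $(S,v)$, and $\varphi^*$ is such an autoequivalence. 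Modulo checking that ``canonical'' genuinely entails this functoriality (which it does, by the compatibility of Markman's embeddings with correspondences), the statement follows formally. I would also record the alternative elementary route via the Gieseker chamber plus \Ref{thm}{bmstabsurj}(1), deforming $\tau$ to a $\varphi$-invariant Gieseker-type stability condition, for which the equivariance is exactly \cite[Sect.\ 3]{OW13}, and then transporting along the birational map, which is an isomorphism in codimension one and hence induces a Hodge isometry on $H^2$ intertwining the (birational) actions.
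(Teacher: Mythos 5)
Your proposal is correct and follows essentially the same route as the paper: the authors likewise reduce to the explicit Fourier--Mukai description of the isomorphism via the quasi-universal family $\mc{E}$ (Definition 5.4 in \cite{BM12}) and conclude from the $(\widehat{\varphi}\times\varphi)$-invariance of $\mc{E}$. Your extra care about the line-bundle ambiguity of the quasi-universal family is a reasonable refinement that the paper's two-line proof glosses over, but it does not change the argument.
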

\begin{proof}
Using Definition 5.4 of the above isomorphism in \cite{BM12}, we only need to check that for any curve $C\subset M_\tau(v)$ we have
\[\Phi_{\mc{E}}(\mc{O}_{\widehat{\varphi}^*C})\simeq \varphi^*\Phi_{\mc{E}}(\mc{O}_C),\]
where $\Phi_{\mc{E}}$ is the Fourier$-$Mukai transform associated with a quasi universal family $\mc{E}$ on $M_\tau(v)\times S$. But this follows easily from the fact that, by the definition of $\hat{\varphi}$, $\mc{E}$ is $(\widehat{\varphi}\times\varphi)$-invariant.
\end{proof}

As usual, the results can be translated in the appropriate sense to the case of abelian surfaces $A$ and their moduli. If we consider automorphisms $\varphi$ of $A$ which preserve the origin (\ie homomorphisms), a straightforward calculation shows that the induced automorphism respects the fibre $K_\tau(v)$ over $(0,0)$ of the Albanese map $M_\tau(v)\rightarrow A\times\hat{A}$ and we obtain an induced action on the fibre. The other important class of automorphisms on $A$ are translations. By \cite{BNS11} we know that we have an action of the group of $n+1$-torsion points on $K_\tau(v)$, where $\dim K_\tau(v)=2n$.

\section{Periods of Moduli Spaces}\label{sec:periods_mod_spac}


This section is devoted to answering the following question: How can we determine if a given manifold of \kntiposp is, in fact, a moduli space of stable objects on some K3 surface? We state a necessary and sufficient criterion entirely in terms of lattice theory.

\begin{defn}
Let $X$ be a projective manifold of \kntiposp ($n\geq2$) and let $i$ be a primitive embedding of $H^2(X,\Z)$ into the Mukai lattice $\Lambda_{24}$. Endow the latter with  the (unique) Hodge structure making $i$ an embedding of Hodge structures such that the complement of the image of $i$ is of type $(1,1)$. We call $X$ a \em numerical moduli space \em if $\Lambda_{24}^{1,1}$ contains the hyperbolic plane $U$ as a direct summand.
\end{defn}

\begin{rmk}
Note that, in general, there is more than one conjugacy class of embeddings $i$ as above. The condition to be a numerical moduli space, however, is independent of the choice of an embedding.
\end{rmk}

\begin{prop}\label{prop:k3n_moduli}
Let $X$ be a manifold of \kntiposp which is a numerical moduli space.  Then there exists a projective $K3$ surface $S$ such that $X$ is, in fact, a moduli space of stable objects of $D^b(S)$ for some stability condition $\tau\in \mathrm{Stab}(S)$.  
\end{prop}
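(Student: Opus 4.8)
The plan is to reverse-engineer a $K3$ surface and a Mukai vector from the lattice-theoretic hypothesis. Starting from a primitive embedding $i\colon H^2(X,\Z)\hookrightarrow\Lambda_{24}$ realizing $X$ as a numerical moduli space, the orthogonal complement $i(H^2(X,\Z))^\perp$ in $\Lambda_{24}$ is a rank-$1$ lattice, say generated by a vector $v$ with $v^2=2n-2$ (this is forced since $H^2(X,\Z)$ has discriminant $2n-2$ and $\Lambda_{24}$ is unimodular, so the complement has the same discriminant). Endow $\Lambda_{24}$ with the Hodge structure making $i$ a Hodge embedding with $(1,1)$-complement; then $v$ is of type $(1,1)$. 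By hypothesis $\Lambda_{24}^{1,1}$ contains a copy of $U$ as a direct summand. The first step is to use this copy of $U$ to recognize $\Lambda_{24}$, with its Hodge structure, as the Mukai lattice $H^\ast(S,\Z)$ of an honest projective $K3$ surface $S$: a hyperbolic plane inside the $(1,1)$-part provides a class $e$ with $e^2=0$ and $\div(e)=1$, and quotienting/taking $e^\perp/e$ (or more precisely, using $e$ and its dual to split off $U$) produces an abstract weight-two Hodge structure of $K3$-type on a lattice isometric to the $K3$ lattice $\Lambda_{K3}=U^{\oplus 3}\oplus E_8(-1)^{\oplus 2}$, with the transcendental part being exactly the transcendental part of $X$. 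The surjectivity of the period map for $K3$ surfaces then yields a $K3$ surface $S$ with $H^\ast(S,\Z)\cong\Lambda_{24}$ as Hodge lattices, matching the chosen $U$ with the standard hyperbolic summand $H^0\oplus H^4$. This is essentially the argument that a $K3$-type Hodge structure on $\Lambda_{24}$ containing an isotropic $(1,1)$-vector of divisor $1$ comes from a genuine $K3$.

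Once $S$ is in hand, $v\in H^\ast(S,\Z)$ is a $(1,1)$-class of positive square $v^2=2n-2\geq 2$. By Lemma \ref{lem:posvec}, after possibly replacing $v$ by $-v$ (which does not change $v^\perp$ and is harmless) we may assume $v$ is a positive Mukai vector. It is primitive because the embedding $i$ was primitive, equivalently $v$ generates a primitive rank-$1$ sublattice. The second step is to choose a stability condition $\tau=\tau_{\beta,\omega}\in\mathrm{Stab}(S)$ generic with respect to $v$; then by \cite[Thm.\ 5.9]{BM12} (as recalled before \Ref{thm}{bmstabsurj}) the moduli space $M_\tau(v)$ is a projective manifold of \kntiposp, and we have a Hodge isometry $H^2(M_\tau(v),\Z)\cong v^\perp\subset H^\ast(S,\Z)$. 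By construction $v^\perp$, with its Hodge structure, is Hodge isometric to $H^2(X,\Z)$ via $i$.

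The final step is to promote this Hodge isometry between $H^2(X,\Z)$ and $H^2(M_\tau(v),\Z)$ to a birational — hence, by the standard fact that birational \issp manifolds are isomorphic when one is not... actually, rather, to an actual isomorphism — identification. Here I would invoke \Ref{thm}{global_torelli}: a Hodge isometry $H^2(X,\Z)\to H^2(M_\tau(v),\Z)$ which is a parallel transport operator induces a birational map $\phi\colon X\dashrightarrow M_\tau(v)$. To check the parallel-transport condition one uses Markman's criterion (the theorem on $\iota_X$ recalled after \Ref{thm}{global_torelli}): the two canonical embeddings $\iota_X$ and $\iota_{M_\tau(v)}$ into $\Lambda_{24}$ must agree in $O(L,\Lambda_{24})/O(\Lambda_{24})$. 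Since both are constructed from the same embedding data (the embedding $i$ we started with, which by Markman's theory can be taken to represent $\iota_X$, and the tautological embedding $v^\perp\hookrightarrow H^\ast(S,\Z)\cong\Lambda_{24}$ for $M_\tau(v)$), possibly after composing with an isometry of $\Lambda_{24}$ we arrange that they lie in the same orbit. Finally, $X$ being birational to a moduli space of stable objects is upgraded to being itself such a moduli space by \Ref{cor}{hodge_moduli_space} (or directly by \Ref{thm}{bmstabsurj}(2): running through the wall-and-chamber structure of $\mathrm{Stab}(S)$ and moving $\tau$, every birational model of $M_\tau(v)$ is again some $M_{\tau'}(v)$, and $X$, being birational to $M_\tau(v)$, is one of these).

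The main obstacle I anticipate is the parallel-transport/orbit bookkeeping in the last step: $O(L,\Lambda_{24})/O(\Lambda_{24})$ can have $2^r$ elements where $r$ counts prime factors of $n-1$, so the embedding $i$ we picked need not a priori land in the same orbit as the moduli-space embedding $\iota_{M_\tau(v)}$. One must argue that by choosing the $K3$ Hodge structure appropriately — i.e. exploiting the freedom in how the copy of $U$ sits and in replacing $i$ by $g\circ i$ for $g\in O(\Lambda_{24})$ — we can always land in the correct orbit, or else that the conclusion ("$X$ is \emph{some} moduli space of stable objects on \emph{some} $K3$") is insensitive to which orbit we are in. The cleanest route is probably to observe that the statement only asserts existence of \emph{a} $K3$ and \emph{a} stability condition, so it suffices to produce one embedding $i'$ of $H^2(X,\Z)$ into $\Lambda_{24}$ that is a numerical-moduli embedding and simultaneously equals $\iota_X$ up to $O(\Lambda_{24})$; the hypothesis guarantees the "numerical moduli" property is orbit-independent, so one is free to take $i'=\iota_X$ itself and check it contains $U$ in its $(1,1)$-complement.
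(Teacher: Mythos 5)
Your proposal is correct and follows essentially the same route as the paper: use the hyperbolic plane in the $(1,1)$-part together with surjectivity of the period map to produce the $K3$ surface $S$, take the rank-one orthogonal complement of $H^2(X,\mathbb{Z})$ as the (positive, primitive) Mukai vector, form $M_\tau(v)$ for a $v$-generic $\tau$, deduce birationality from Markman's criterion, and conclude with the Bayer--Macr\`{i} wall-crossing results. Your extra care about the $O(L,\Lambda_{24})/O(\Lambda_{24})$-orbit is resolved exactly as the paper does, via the observation that the numerical-moduli condition is independent of the chosen embedding, so one may work with $\iota_X$ itself.
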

\begin{proof}
The complement of the hyperbolic plane in $\Lambda_{24}$ is a lattice of signature $(3,19)$ and it inherits in a natural way a weight two Hodge structure. By the surjectivity of the period map for $K3$ surfaces there exists a surface $S$ such that $\Lambda_{24}^{1,1}\cong U\oplus \NS(S)$. From the hyperbolicity of $\NS(S)$ it follows that $S$ is algebraic and furthermore we may choose an isometry $H^*(S,\Z)\cong \Lambda_{24}$ respecting the given embedding of $H^2(S,\Z)$. The orthogonal complement of $H^2(X,\Z)$ inside the Mukai lattice is a rank one lattice. The square of a generator has length $2n-2$. By \Ref{lem}{posvec} we obtain a positive Mukai vector $v$. Let $\tau$ be a $v$-generic stability condition on $S$ and set $M:=M_\tau(v)$. It is a manifold of \kntiposp which is, by construction, Hodge isometric to $X$ such that the embeddings of $H^2(X,\Z)$ and $H^2(M_\tau(v))$ into the Mukai lattice are equivalent. Thus by \cite[Cor.\ 9.9 (1)]{Mar11} $X$ and $M_\tau(v)$ are birational. We may conclude using the results of \cite{BM13}, which we summarised in \Ref{ssec}{stab_obj}.
\end{proof}

We remark that an independent proof of the above result has been given by Addington \cite[Proposition 4]{Add14} while this work was being prepared.\\
Copying the above definition to the case of Kummer $n$-type manifolds (and replacing $\Lambda_{24}$ by $\Lambda_8$), we have the following result:

\begin{prop}\label{prop:kummer_moduli}
Let $X$ be a manifold of Kummer $n$-type with $n+1$ a prime power. Assume that $X$ is a numerical moduli space. Then there exists an abelian surface $A$ such that $X$ is, in fact, the fibre of the Albanese map of a moduli space of stable objects of $D^b(A)$ for some stability condition $\tau\in \mathrm{Stab}(A)$.  
\end{prop}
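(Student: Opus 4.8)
The plan is to mimic the proof of \Ref{prop}{k3n_moduli} step by step, substituting the lattice $\Lambda_8$ for $\Lambda_{24}$, abelian surfaces for $K3$ surfaces, and Yoshioka's construction of moduli of stable objects on abelian surfaces for the Bayer--Macr\`i theory. First I would observe that the orthogonal complement of the hyperbolic plane $U$ inside $\Lambda_8$ is an even lattice of signature $(3,3)$, and that the Hodge structure on $\Lambda_8$ determined by the embedding $i$ of $H^2(X,\mathbb{Z})$ restricts to a weight-two Hodge structure of $K3$-surface--type ($h^{2,0}=1$) on this rank-$6$ sublattice $U^\perp$. The numerical moduli space hypothesis is exactly what guarantees that $U$ splits off, so $U^\perp$ is well defined up to the choice of embedding; as in the $K3^{[n]}$-case one should note this is independent of that choice.

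The next step is to realise $U^\perp$ as the Mukai--Hodge structure of an abelian surface. By the surjectivity of the period map for abelian surfaces (Shioda), there exists a complex $2$-torus $A$ with a Hodge isometry $U^\perp \cong U \oplus \NS(A)$ — here one uses that, for an abelian surface, $H^2(A,\mathbb{Z})$ has rank $6$ with signature $(3,3)$ and that the transcendental part has signature $(2,\rho')$ — and the hyperbolicity of $\NS(A)$ inside the realised Hodge structure forces $A$ to be projective (algebraic). Then one chooses an isometry $H^*(A,\mathbb{Z}) \cong \Lambda_8$ respecting the chosen embedding of $H^2(A,\mathbb{Z})$, so that the full Mukai--Hodge structure of $A$ matches $\Lambda_8$ with its given Hodge structure. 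The orthogonal complement of $H^2(X,\mathbb{Z})$ in $\Lambda_8$ is a rank-one lattice whose generator $v$ has square $2n+2$ ($=-(-2-2n)$), hence $v^2 \geq 2$, and $v$ is of $(1,1)$-type by construction; by the abelian-surface analogue of \Ref{lem}{posvec} we may assume $v$ is a positive Mukai vector. Pick a $v$-generic stability condition $\tau \in \mathrm{Stab}(A)$ and form the Albanese fibre $K := K_\tau(v)$ of $M_\tau(v) \to A \times \hat A$; by \cite[Thm.\ 1.9]{Yos12}, recalled in \Ref{rmk}{mod_stab_obj_ab}, $K$ is a manifold of Kummer $n$-type with $H^2(K,\mathbb{Z}) \cong v^\perp$ as Hodge structures, and the embedding of $H^2(K,\mathbb{Z})$ into $\Lambda_8$ is the one coming from $v^\perp \subset \Lambda_8$. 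Thus $K$ and $X$ are Hodge isometric via an isometry compatible with the two embeddings into $\Lambda_8$.

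It then remains to upgrade this Hodge isometry between $X$ and $K_\tau(v)$ to a birational map. This is where the prime-power hypothesis on $n+1$ enters in an essential way: by \Ref{prop}{mono_kum} (Markman, \cite{MM12}), when $n+1$ is a prime power one has $Mon^2 = \mathcal{N}$, i.e.\ the monodromy group is exactly the group of orientation-preserving isometries acting as $\pm 1$ on the discriminant with the determinant-character condition, and this description makes the embedding into $\Lambda_8$ (together with its $O(\Lambda_8)$-orbit) a complete birational invariant, exactly as Markman's \cite[Cor.\ 9.9]{Mar11} does in the $K3^{[n]}$-case; hence a Hodge isometry respecting the embeddings is a parallel transport operator, and the Global Torelli theorem \Ref{thm}{global_torelli} produces a birational map $X \dashrightarrow K_\tau(v)$. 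Finally, since $K_\tau(v)$ is a $v$-generic Albanese fibre of a moduli space, the Bayer--Macr\`i--Yoshioka chamber-structure results (\Ref{rmk}{mod_stab_obj_ab}) say that every birational model of $K_\tau(v)$ is again such an Albanese fibre for the same Mukai vector $v$ and a different, still $v$-generic, stability condition; applying this to $X$ identifies $X$ itself with $K_{\tau'}(v)$ for a suitable $\tau'$, completing the proof.

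**Main obstacle.** I expect the step requiring the most care to be the passage from Hodge isometry to birational equivalence: one must check that \Ref{prop}{mono_kum} really yields the analogue of Markman's embedding obstruction in the Kummer setting — in particular that, for $n+1$ a prime power, the $O(\Lambda_8)$-conjugacy class of the embedding $H^2 \hookrightarrow \Lambda_8$ is well defined and a complete invariant of the parallel-transport orbit — and that the ''up to $-\id$'' and orientation subtleties noted in \Ref{rmk}{mod_stab_obj_ab} (the four components of the moduli space of marked $2$-tori, the duality $A \leftrightarrow \hat A$) do not obstruct finding an actual birational model rather than merely a Hodge-isometric one. The realisation of $U^\perp$ by an abelian surface and the positivity of $v$ are, by contrast, routine once the signature bookkeeping is in place.
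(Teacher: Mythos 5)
Your setup (splitting off $U$ from $\Lambda_8$, realising $U^\perp$ as $H^2$ of a $2$-torus via surjectivity of the period map, extracting a positive Mukai vector $v$ of square $2n+2$, and forming the Albanese fibre $K_\tau(v)$ Hodge-isometric to $X$ compatibly with the embeddings) matches the paper's intent. But the step you yourself flag as the main obstacle is where the argument genuinely breaks: it is \emph{false} that a Hodge isometry respecting the embeddings into $\Lambda_8$ must be a parallel transport operator, i.e.\ the $O(\Lambda_8)$-orbit of the embedding is \emph{not} a complete birational invariant in the Kummer setting, unlike in Markman's \ktiposp result. The counterexample is already in the paper's own \Ref{rmk}{mod_stab_obj_ab}: the Shioda isometry $g\colon H^2(A,\Z)\to H^2(\hat{A},\Z)$ extends to an isometry of the Mukai lattices, so $v^\perp\subset\Lambda_8$ and $\hat{v}^\perp\subset\Lambda_8$ lie in the same $O(\Lambda_8)$-orbit and $K_\tau(v)$, $K_{\hat\tau}(\hat v)$ are Hodge isometric compatibly with the embeddings -- yet by Namikawa they are in general \emph{not} birational. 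So your manifold $X$ may well fail to be birational to the $K_\tau(v)$ you constructed, and Global Torelli cannot be invoked as you propose.

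The role of the prime-power hypothesis is different from what you assume. By \Ref{prop}{mono_kum}, $Mon^2=\mathcal{N}$ when $n+1$ is a prime power, which pins down the number of connected components of the moduli space of marked Kummer $n$-type manifolds at exactly four (matching the four components of the moduli space of marked $2$-tori: $(A,f)$, $(A,f\circ-\id)$, $(\hat A,f\circ g)$, $(\hat A, f\circ g\circ -\id)$). The paper's argument then runs: the points of the period fibre $\mc{P}^{-1}(\mc{P}(X,f))$ lying in two of the four components are exhausted by $K_\tau(v)$, its birational models, and the $-\id$-twisted markings, while the points in the remaining two components are exhausted by the Albanese fibres of moduli spaces on the \emph{dual} surface $\hat A$ with Mukai vector $\hat v$. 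Since $(X,f)$ lies in one of the four components, it is in either case an Albanese fibre of a moduli space on some abelian surface ($A$ or $\hat A$), which is all the proposition claims. To repair your write-up you should replace the parallel-transport claim with this component count and allow the surface in the conclusion to be $\hat A$ rather than the $A$ you first constructed.
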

\begin{proof}
It is the lacking of a precise description of the monodromy group of manifolds of Kummer $n$-type for general $n$ which prevents us from proving the above theorem in general. However, the partial description given in \Ref{prop}{mono_kum} is enough to prove the theorem if $n+1$ is a prime power: there are four connected components of the moduli space of marked manifolds of Kummer $n$-type. Now, as in the case of \kntiposp manifolds, we find an abelian surface and a Mukai vector such that the associated (Albanese fibre of the) moduli space is Hodge isometric to $X$. Together with its birational models it covers all points in the fibre $\mc{P}^{-1}(\mc{P}(X,f)),$ which belong to two out of the four connected components. (Here $f$ is some marking of $X$. Note that we may always compose a marking with $-\id$ to end up in a second connected component.) The points in the fibre, which belong to the other two connected components, correspond to (Albanese fibres of) moduli spaces of stable objects on the dual abelian surface (cf. \Ref{rmk}{mod_stab_obj_ab}). 
\end{proof}

\begin{conj}
The above statement holds, in fact, for all $n$.
\end{conj}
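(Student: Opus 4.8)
The plan is to carry out the proof of \Ref{prop}{kummer_moduli} for arbitrary $n$; the only step that fails in general is the appeal to \Ref{prop}{mono_kum}, which describes the monodromy group of manifolds of Kummer $n$-type only when $n+1$ is a prime power. The construction of a candidate is uniform in $n$. Given a numerical moduli space $X$ of Kummer $n$-type with a marking $f$ and period $p=\mc{P}(X,f)$, choose a primitive embedding $H^2(X,\Z)\hookrightarrow\Lambda_8$; the numerical moduli space hypothesis forces $\Lambda_8^{1,1}$ to contain a copy of $U$, so by the Torelli theorem and surjectivity of the period map for complex $2$-tori there is a $2$-torus $A$ with $\NS(A)$ the orthogonal complement of $U$ in $\Lambda_8^{1,1}$, together with an isometry of its Mukai lattice with $\Lambda_8$ extending the embedding. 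Here $A$ is automatically projective: a numerical moduli space of Kummer type has $\rho(X)\ge2$, since otherwise $\Lambda_8^{1,1}$ would be positive definite of rank $2$ and could not contain $U$, and then $\NS(A)$ carries a class of positive square. The orthogonal complement of $H^2(X,\Z)$ in $\Lambda_8$ is a positive rank-one lattice with generator of square $2n+2$; as in \Ref{lem}{posvec} this produces a positive Mukai vector $v$ on $A$, and for $v$-generic $\tau$ the Albanese fibre $K_\tau(v)$ of $M_\tau(v)$ is a manifold of Kummer $n$-type Hodge isometric to $X$.

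What is then needed is to promote this Hodge isometry to a parallel transport operator, possibly after deforming $A$, passing to the dual torus $\widehat{A}$ (cf.\ \Ref{rmk}{mod_stab_obj_ab}), composing $f$ with $-\id$, and running through the birational models of $K_\tau(v)$, all of which are again Albanese fibres of moduli spaces of stable objects by the Bayer--Macr\`{i}-type statement recalled in \Ref{rmk}{mod_stab_obj_ab}. Since \Ref{prop}{mono_kum} already gives the inclusion $\mc{N}(X)\subseteq Mon^2(X)$, and since $\pi_0(\mc{M}_N)\cong O(N)/Mon^2(X)$ is a quotient of the finite set $O(N)/\mc{N}(X)$, it suffices to show that, as the embedding $H^2(X,\Z)\hookrightarrow\Lambda_8$ ranges over its finitely many $O(\Lambda_8)$-orbits and $f$ over its two orientations, the resulting marked Albanese fibres hit every coset of $O(N)/\mc{N}(X)$; then Bayer--Macr\`{i} covers the whole fibre $\mc{P}^{-1}(p)$ inside every connected component of $\mc{M}_N$, so in particular $(X,f)$ is reached. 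One expects $2^{r}$ embedding orbits, with $r$ the number of distinct primes dividing $n+1$, corresponding to $2^{r}$ abelian surfaces closed under duality, and $\lvert O(N)/\mc{N}(X)\rvert=2^{r+1}$, so the bookkeeping should match; the task is to verify that every embedding orbit is realised by an honest abelian surface and that passage to $\widehat{A}$ and composition with $-\id$ supply the remaining factor of $2$ and mediate between distinct embedding orbits.

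The principal obstacle is exactly the missing analogue of \Ref{prop}{mono_kum}: for manifolds of Kummer $n$-type there is at present no counterpart of Markman's description of parallel transport in terms of the canonical $O(\Lambda_{24})$-orbit of embeddings into the Mukai lattice, and \Ref{prop}{mono_kum} leaves open whether $Mon^2(X)$ contains orientation-preserving isometries acting on $A_X\cong\Z/(2n+2)$ by something other than $\pm\id$. I would attack this either by extending the Markman--Mehrotra generation argument to prove that $Mon^2(X)=\mc{N}(X)$ for all $n$ — using the derived autoequivalences of $D^b(A)$, whose image in $O(\Lambda_8)$ is described by Orlov, as a source of monodromy operators on $K_\tau(v)$ — or, more modestly, by establishing directly the lattice-theoretic realisation statement of the previous paragraph, which uses only the known inclusion $\mc{N}(X)\subseteq Mon^2(X)$ and would thus make the whole argument independent of the precise value of $Mon^2(X)$. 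Either route settles the conjecture.
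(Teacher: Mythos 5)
There is a genuine gap, and it is the very gap that makes this statement a conjecture in the paper rather than a theorem: the paper offers no proof of it, and your text does not supply one either. Your first paragraph correctly reproduces the construction from the proof of \Ref{prop}{kummer_moduli} (choose an embedding $H^2(X,\Z)\hookrightarrow\Lambda_8$, produce an abelian surface $A$ and a positive Mukai vector $v$ with $K_\tau(v)$ Hodge isometric to $X$); this part is uniform in $n$ and unproblematic. But the decisive step — promoting that Hodge isometry to a parallel transport operator, i.e.\ showing that $(X,f)$ lies in a connected component of $\mc{M}_N$ that is actually reached by one of the marked Albanese fibres $K_\tau(v)$, $K_{\hat\tau}(\hat v)$ or their compositions with $-\id$ — is exactly where \Ref{prop}{mono_kum} is used when $n+1$ is a prime power, and for general $n$ you do not establish it: you only say you ``would attack'' it either by proving $Mon^2(X)=\mc{N}(X)$ for all $n$ or by a direct realisation statement. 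A plan of attack is not a proof, and both proposed routes are open problems (the first is precisely the missing extension of Markman--Mehrotra; the second is an analogue of Markman's theorem for \kntiposp manifolds that the $O(\Lambda_{24})$-orbit of the embedding is a parallel-transport invariant, which is not known in the Kummer setting).

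Moreover, even granting the reduction ``it suffices to hit every coset of $O(N)/\mc{N}(X)$'', your bookkeeping argument has a logical hole: matching cardinalities ($2^r$ conjectural embedding orbits, times the choice of $\pm\id$, against a conjectural index $\lvert O(N)/\mc{N}(X)\rvert=2^{r+1}$) proves nothing unless you also show (i) that every embedding orbit is realised by an honest abelian surface together with a Mukai vector whose Albanese fibre has the prescribed period, and (ii) that the resulting marked pairs lie in pairwise \emph{distinct} connected components — otherwise some components could be hit twice and the one containing $(X,f)$ missed entirely. Point (ii) again requires a monodromy invariant distinguishing the embedding orbits, i.e.\ the same missing ingredient; neither the orbit count $2^r$ nor the index $2^{r+1}$ is verified in your text (both are flagged with ``one expects'' and ``should match''). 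So the proposal correctly diagnoses the obstruction but does not overcome it; as it stands it reproves \Ref{prop}{kummer_moduli} in the prime-power case and leaves the conjecture exactly where the paper leaves it.
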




\section{Induced Automorphism Groups}\label{sec:induced_auto_group}
Next, let us consider what happens in the presence of automorphisms of the manifolds. We start by a definition.

\begin{defn}
Let $X$ be a manifold of \kntiposp and let $G\subset \aut(X)$. We say that $G$ is an \em induced group of automorphisms \em if there exists a projective $K3$ surface $S$ with $G\subset \aut(S)$, a $G$-invariant Mukai vector $v\in H^*(S,\Z)^G$ and a $v$-generic stability condition $\tau$ such that $X\cong M_\tau(v)$ and the induced (maybe apriori birational) action on $M_\tau(v)$ (cf.\ \Ref{prop}{exist_induced}) coincides with the given action of $G$ on $X$.
\end{defn}
The same definition can be given for manifolds of Kummer-type.
\begin{defn}\label{defn:k3n_induced}
Let $X$ be a manifold of \kntiposp and let $G\subset \aut(X)$. Let $i$ be a primitive embedding of $H^2(X,\mathbb{Z})$ inside the Mukai lattice $\Lambda_{24}$. Then the group $G$ is called \emph{numerically induced} if the following hold:
\begin{itemize}
\item  The group $G$ acts trivially on $A_X$, so that its action can be extended to the Mukai lattice with $S_G(\Lambda_{24})\cong S_G(X)$ as in \Ref{lem}{aut_extend}. 
\item The $(1,1)$-part of the lattice $T_G(\Lambda_{24})$ contains the hyperbolic plane $U$ as a direct summand. 
\end{itemize} 
\end{defn}

\begin{rmk}
The two definitions above can be phrased in a very similar fashion for manifolds of Kummer- and of $Og_{10}$- and $Og_6$-type (cf.\ \Ref{sec}{kieran_case} for the latter two). Furthermore note that it is easy to see that an induced group of automorphisms is numerically induced.
\end{rmk}


\begin{thm}\label{thm:k3n_induced}
Let $X$ be a manifold of \kntiposp and let $G\subset \aut(X)$ be a numerically induced group of automorphisms. Then there exists a projective $K3$ surface $S$ with $G\subset \aut(S)$, a $G$-invariant Mukai vector $v$ and a $v$-generic stability condition $\tau$ such that $X\cong M_\tau(v)$ and $G$ is induced.  
\begin{proof}
First of all, let us consider the case $G$ symplectic. Then we have $S_G(X)\subset \NS(X)$ and $S_G(X)$ contains no elements of square $-2$. Since $G$ is numerically induced, let us write $T_G(\Lambda_{24})=U\oplus T$. We then have that $S_G(X)$ embeds in the $K3$ lattice and its orthogonal is $T$, where the action of $G$ is trivial. Let us give this $K3$ lattice the induced Hodge structure from $\Lambda_{24}$ and let $S$ be the corresponding $K3$ surface. By \Ref{prop}{k3n_moduli}, $X$ is a moduli space of stable objects on $S$. Now $G$ is a group of Hodge isometries on $S$ which contains no elements generated by reflections on elements of square $-2$, therefore $G\subset \aut(S)$ and its induced action on $H^2(X)$ is the one we started with. Since the representation of automorphisms on $H^2$ is faithful, we are done.\\
Now let us suppose that no non-trivial element of $G$ preserves the symplectic form. This implies $T_G(X)\subset \NS(X)$. Without loss of generality we can suppose $T_G(X)=\NS(X)$. As before we let $T_G(\Lambda_{24})=U \oplus T$ and we let $S$ be the $K3$ surface associated to the Hodge structure defined on $U^\perp$ inside $\Lambda_{24}$. Again by \Ref{prop}{k3n_moduli} $X$ is a moduli space of stable objects on $S$ and $G$ is a group of Hodge isometries of $S$ preserving $T=\NS(S)$, therefore $G\subset \aut(S)$ and its action on $X$ coincides with the induced one.\\
Finally, if $G_s$ is the symplectic part of $G$, we obtain a $K3$ surface $S$ as in the first step with $G_s\subset \aut(S)$. We can extend also the action of $G$ on $S$ by applying the second step to the group $\overline{G}=G/G_s$.    
\end{proof}
\end{thm}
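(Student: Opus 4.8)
The plan is to reduce the statement about a numerically induced group $G\subset\aut(X)$ to the criterion of \Ref{prop}{k3n_moduli}, treating separately the symplectic and non-symplectic behaviour. First I would split $G$ according to whether its elements preserve the symplectic form. If $G$ is symplectic, the invariant lattice contains the transcendental part, so the co-invariant lattice $S_G(X)$ sits inside $\NS(X)$; moreover $S_G(X)$ contains no $(-2)$-classes (a standard fact: a $(-2)$-class would produce a reflection, which cannot lie in a finite group of symplectic isometries acting without fixed curves, or more precisely the image of $G$ in $O(S_G(X))$ has no reflections). Writing $T_G(\Lambda_{24})=U\oplus T$ using the numerically induced hypothesis, one sees $S_G(X)$ primitively embeds in the $K3$ lattice $U^\perp$ with orthogonal complement $T$; equip this lattice with the Hodge structure inherited from $\Lambda_{24}$ and let $S$ be the resulting $K3$ surface. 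Then \Ref{prop}{k3n_moduli} makes $X$ a moduli space of stable objects on $S$, and since the Hodge isometries in $G$ are generated by isometries containing no $(-2)$-reflections, they are genuine automorphisms of $S$ (using Torelli for $K3$ surfaces); faithfulness of the representation on $H^2$ of an \issp manifold then identifies the induced action on $X$ with the original one.

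Next I would treat the purely non-symplectic case: if no non-trivial element of $G$ fixes $\sigma_X$, then $T_G(X)\subset\NS(X)$, and we may enlarge to assume $T_G(X)=\NS(X)$ without loss of generality (this only makes the statement stronger). Again write $T_G(\Lambda_{24})=U\oplus T$ and let $S$ be the $K3$ surface attached to the Hodge structure on $U^\perp\subset\Lambda_{24}$; by \Ref{prop}{k3n_moduli}, $X$ is a moduli space on $S$. Here $G$ acts as Hodge isometries of $H^2(S)$ preserving $T=\NS(S)$, and because a non-symplectic isometry of a $K3$ surface that preserves the ample cone is automatically realised by an automorphism (again via Torelli), $G\subset\aut(S)$, with induced action on $X$ matching the given one. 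Finally, for general $G$ with symplectic part $G_s$, I would run the first step to obtain a $K3$ surface $S$ with $G_s\subset\aut(S)$, then apply the second step to the non-symplectic quotient $\overline{G}=G/G_s$ to extend the action of all of $G$ to $S$, giving the required induced structure.

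The delicate point — and where the argument must be handled with care — is the passage from \emph{Hodge isometry of $H^2(S,\Z)$} to \emph{automorphism of $S$}. One must verify that the group $G$ (or the relevant subgroup) can be chosen inside the subgroup of isometries preserving a K\"ahler/ample class, i.e. that no element acts by or factors through a $(-2)$-reflection on the Picard lattice. In the symplectic case this is the assertion that $S_G(X)$ contains no $(-2)$-vectors, which is exactly the characterisation of co-invariant lattices of symplectic group actions and must be invoked (or the absence of reflections in the image of $G$ deduced directly); in the non-symplectic case one needs that the chosen Hodge structure and marking can be arranged so that $G$ preserves the positive cone component containing an ample class. Everything else — the lattice decomposition $T_G(\Lambda_{24})=U\oplus T$, the primitive embeddings, the extension of the $G$-action to $\Lambda_{24}$ via \Ref{lem}{aut_extend}, and the final descent along $X\cong M_\tau(v)$ — is bookkeeping that follows from the results collected in \Ref{sec}{prel} and \Ref{sec}{periods_mod_spac}, together with the faithfulness of $\nu\colon\aut(X)\to O(H^2(X))$.
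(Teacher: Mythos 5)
Your proposal follows the paper's proof essentially verbatim: the same case split into symplectic and purely non-symplectic subgroups, the same decomposition $T_G(\Lambda_{24})=U\oplus T$ feeding into \Ref{prop}{k3n_moduli}, the same use of the absence of $(-2)$-reflections (resp.\ preservation of $\NS(S)$ and the ample cone) plus Torelli to promote Hodge isometries to automorphisms of $S$, the same appeal to faithfulness of $\nu$, and the same final assembly via $G_s$ and $\overline{G}=G/G_s$. The "delicate point" you flag is exactly the one the paper's argument rests on, so there is nothing to add.
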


\begin{thm}
Let $X$ be a manifold of Kummer-type with $n+1$ a prime power. Let $G\subset \aut(X)$ be a numerically induced group of automorphism. Then there exists an abelian surface $A$ with $G\subset \aut(A)$, a $G$-invariant Mukai vector $v$ and a stability condition $\tau$ such that $X$ is the Albanese fibre of $M_\tau(v)$ and $G$ is induced.  
\end{thm}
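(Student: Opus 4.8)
The plan is to follow the blueprint of the proof of \Ref{thm}{k3n_induced} almost verbatim, substituting the $K3$ input with the abelian-surface input provided by \Ref{prop}{kummer_moduli} and \Ref{rmk}{mod_stab_obj_ab}. Since $G$ is numerically induced, it acts trivially on $A_X$, so by \Ref{lem}{aut_extend} we may extend the $G$-action to the Mukai lattice $\Lambda_8$ with $S_G(\Lambda_8)\cong S_G(X)$, and by hypothesis $T_G(\Lambda_8)=U\oplus T$ for some lattice $T$ on which $G$ acts trivially. As before I would first treat the symplectic case: here $S_G(X)\subset\NS(X)$ embeds into the abelian-surface lattice $U^2\oplus(0)$-complement inside $\Lambda_8$; giving $U^\perp$ inside $\Lambda_8$ the Hodge structure induced from $\Lambda_8$ and applying the surjectivity of the period map for abelian surfaces (via Shioda, as used in \Ref{prop}{kummer_moduli}), we obtain an abelian surface $A$. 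Because $n+1$ is a prime power, \Ref{prop}{kummer_moduli} applies and $X$ is the Albanese fibre of a moduli space $M_\tau(v)$ on $A$; then $G$ acts by Hodge isometries on $H^2(A)$ fixing $\NS(A)$ componentwise, hence — since for abelian surfaces there are no $(-2)$-classes to worry about and the Hodge isometry group coincides with the automorphism group up to sign and translations (cf.\ the description of $\aut(K_n(A))$ recalled after \cite{HT13}) — we get $G\subset\aut(A)$ preserving the origin, and the induced action on $K_\tau(v)\cong X$ is the one we started with, using \Ref{lem}{equivariant_induced_iso} and \Ref{prop}{exist_induced} in the abelian-surface incarnation discussed in \Ref{ssec}{ind_aut}.

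For the non-symplectic case I would argue identically: when no non-trivial element of $G$ preserves the symplectic form we have $T_G(X)\subset\NS(X)$, and (without loss of generality) $T_G(X)=\NS(X)$; writing $T_G(\Lambda_8)=U\oplus T$ and letting $A$ be the abelian surface associated to the Hodge structure on $U^\perp\subset\Lambda_8$, \Ref{prop}{kummer_moduli} realises $X$ as the Albanese fibre of $M_\tau(v)$, and $G$ becomes a group of Hodge isometries of $H^2(A)$ fixing $T=\NS(A)$, hence lies in $\aut(A)$ with the correct induced action. The general case is then assembled exactly as in \Ref{thm}{k3n_induced}: if $G_s$ is the symplectic part, the first step produces $A$ with $G_s\subset\aut(A)$, and the second step applied to $\overline G=G/G_s$ extends the whole $G$-action to $A$.

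The main obstacle I anticipate is the step where one concludes that a group of Hodge isometries of $H^2(A,\Z)$ fixing $\NS(A)$ (resp.\ the relevant invariant sublattice) is actually induced by automorphisms of the abelian surface $A$ — and, crucially, by automorphisms \emph{fixing the origin}, so that the induced action genuinely respects the Albanese fibre over $(0,0)$. For $K3$ surfaces this is immediate from the strong Torelli theorem plus the absence of $(-2)$-curves in the co-invariant lattice; for abelian surfaces the Torelli theorem is more delicate (Hodge isometries need not lift to isomorphisms in general, and when they do there is ambiguity by translations and by the $\pm1$ ambiguity), so one must invoke the appropriate form of Shioda's Torelli theorem for abelian surfaces and check that the extension $g$ of \Ref{lem}{aut_extend} can be chosen compatibly with a marking realised by a genuine homomorphism $A\to A$. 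A secondary, more bookkeeping-type difficulty is keeping track of the four connected components of the moduli space of marked Kummer $n$-type manifolds (and the dual abelian surface appearing in \Ref{rmk}{mod_stab_obj_ab}): one must make sure the component in which $(X,f)$ lands is the one for which the realisation as an Albanese fibre on $A$ (rather than on $\hat A$) carries the prescribed $G$-action, possibly after composing the marking with $-\id$ or with the Shioda isometry $g$.
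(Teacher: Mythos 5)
Your proposal is correct and follows essentially the same route as the paper: the published proof is literally one line, ``the proof is literally the same as that of \Ref{thm}{k3n_induced}'', followed only by the remark that manifolds of Kummer type have non-trivial automorphisms acting trivially on $H^2$ but that these deform smoothly (they are the translations by $(n+1)$-torsion points and the sign change). The obstacles you flag --- the Torelli/origin-fixing step for abelian surfaces and the bookkeeping of the four connected components via \Ref{rmk}{mod_stab_obj_ab} --- are exactly the points the paper leaves implicit, so your write-up is if anything more explicit than the original.
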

\begin{proof}
The proof is literally the same as the one of \Ref{thm}{k3n_induced}. We remark here that all these manifolds have non-trivial automorphisms acting trivially on the second cohomology. However, these automorphisms deform smoothly on all manifolds of Kummer-type.
\end{proof}

In the special case of finite groups of automorphisms preserving the symplectic form, the notions of induced and natural morphisms coincide, as the following more general result shows:

\begin{thm}\label{thm:sympl_def}
Let $\mathcal{M}_{N}$ be the moduli space of marked \issp manifolds with $H^2\cong N$. Let $G$ be a finite group of symplectic automorphisms acting on some $(X,f)\in\mathcal{M}_N$ and let $G$ contain all morphisms acting trivially on $H^2$. Then the number of connected components of the moduli space of manifolds having the same $G$-action as $X$ on $H^2$ is bounded by the number of connected components of $\mathcal{M}_N$.
\begin{proof}
Let $Y$ be another manifold such that $G$ acts on its second cohomology as it acts on $X$, \ie there exists an isometry $f\colon H^2(X,\mathbb{Z})\rightarrow H^2(Y,\mathbb{Z})$ which restricts to an isometry of $T_G(X)$ and $T_G(Y)$. In this way we can define a moduli space of marked manifolds having $S_G(X)$ primitively embedded inside the Picard lattice. To prove our claim, it is enough to prove that $X$ can be deformed into $Y$ if $f$ is a parallel transport operator, so we assume it is from now on.

Let $\alpha$ be a $G$-invariant \kahl class on $X$ and let $TW_\alpha(X)$ be the twistor family associated to it. The action of $G$ extends fibrewise to $TW_\alpha(X)$, since a \kahl class is always preserved. If we let $\Omega_G$ be the period domain associated to the lattice $T_G(X)$, we obtain that it has signature $(3,\mathrm{rank}-3)$ and is therefore connected by twistor lines (see \cite[Proposition 3.7]{Huy10}. These lines lift to twistor families given by $G$-invariant \kahl classes on the preimage of the period map, as is done in \cite{Mon13} in the \kntiposp case. In this way we can deform $(X,G)$ to a pair $(Z,G)$ with the same period of $Y$. The final deformation step can be done using local $G$-deformations. The very general point of such a deformation has $\NS\cong S_G(X)$ and the positive cone coincides with the \kahl cone, therefore the local $G$-deformations of $Z$ and $Y$ intersect and the action of $G$ on $Y$ coincides on $H^2$ with the action deformed from $X$. Now, since $G$ contains all morphisms acting trivially on $H^2$, the action we deformed to $Y$ coincides with the initial one.
\end{proof}
\end{thm}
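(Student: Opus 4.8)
The plan is to reduce the statement to a deformation argument analogous to the one already outlined in the proof of \Ref{thm}{sympl_def}, and to the Global Torelli theorem (\Ref{thm}{global_torelli}). Suppose $Y$ is another \issp manifold on which $G$ acts so that, with respect to some isometry $g\colon H^2(X,\Z)\to H^2(Y,\Z)$, the $G$-action on $H^2(Y,\Z)$ is carried to the $G$-action on $H^2(X,\Z)$; in particular $g$ restricts to isometries $T_G(X)\cong T_G(Y)$ and $S_G(X)\cong S_G(Y)$. Since $H^2$ carries a faithful-modulo-kernel representation and $G$ contains \emph{all} automorphisms acting trivially on $H^2$, it is enough to produce a deformation of the pair $(X,G)$ to $(Y,G)$ whenever $g$ is a parallel transport operator; the number of parallel-transport-classes of markings is exactly (bounded by) the number of connected components of $\mc{M}_N$, which gives the stated bound.

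First I would fix a $G$-invariant \kahl class $\alpha$ on $X$ (possible because $G$ is finite, by averaging a \kahl class) and form the twistor family $TW_\alpha(X)$; because any \kahl class is $G$-invariantly preserved along twistor deformations, the $G$-action extends fibrewise. Since $G$ is symplectic, $S_G(X)\subset\NS(X)$ and the invariant lattice $T_G(X)$ contains the transcendental lattice, so the associated period domain $\Omega_{T_G(X)}$ has signature $(3,\mathrm{rank}-3)$ and hence is connected by twistor lines by \cite[Proposition 3.7]{Huy10}. These twistor lines lift, over the preimage of the $G$-equivariant period map, to genuine twistor families given by $G$-invariant \kahl classes (exactly as in the \kntiposp case treated in \cite{Mon13}); chaining them deforms $(X,G)$ to a pair $(Z,G)$ with the same period point as $Y$. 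The last step is a local $G$-equivariant deformation: at a very general point of the $G$-deformation space the Picard lattice is $S_G(X)$ and the \kahl cone equals the positive cone, so the local $G$-deformation spaces of $Z$ and of $Y$ meet, yielding a deformation of $(Z,G)$ to $(Y,G')$ where $G'$ is the action deformed from $X$. Finally, since $g$ was assumed a parallel transport operator, $Z$ and $Y$ lie in the same connected component of $\mc{M}_N$, so the argument applies; and since $G$ contains every automorphism acting trivially on $H^2$, the deformed action $G'$ agrees with the given action of $G$ on $Y$.

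The main obstacle — and the point where one must be careful rather than merely routine — is the lifting of the twistor lines in $\Omega_{T_G(X)}$ to \emph{$G$-equivariant} twistor families: one needs that a twistor line through a very general period point stays inside the region where the period map is a local isomorphism \emph{equivariantly}, i.e.\ that the $G$-invariant \kahl cone does not degenerate along the connecting path, and that the Hodge-theoretic positivity used to rule out $(-2)$-walls for $S_G(X)$ (which is negative definite and contains no $(-2)$-vectors, since $G$ is symplectic) is preserved. This is handled by restricting to very general points along the way and invoking the corresponding fact from \cite{Mon13} for \kntipo together with the general structural results of \cite{Huy10}; but it is the step that genuinely uses the symplectic hypothesis and the finiteness of $G$, and where the deformation-theoretic bookkeeping must be done with care.
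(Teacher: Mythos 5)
Your proposal follows essentially the same route as the paper's proof: reduce to the case where the marking isometry is a parallel transport operator, connect periods via twistor lines in the period domain of $T_G(X)$ lifted to $G$-equivariant twistor families as in \cite{Mon13}, and finish with local $G$-deformations at a very general point where $\NS\cong S_G(X)$ and the K\"ahler cone equals the positive cone, using the hypothesis that $G$ contains all automorphisms acting trivially on $H^2$ to identify the deformed action with the given one. Your added remarks on averaging to obtain a $G$-invariant K\"ahler class and on the absence of $(-2)$-classes in $S_G(X)$ make explicit points the paper leaves implicit, but the argument is the same.
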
 


\begin{ex}
In this small example we want to illustrate that the first condition in \Ref{defn}{k3n_induced} is necessary. Thus we are looking for an automorphism of a manifold of \kntipo, such that the induced action on the discriminant of $H^2$ is non-trivial.

Let us first recall an example of a birational self-map that has been studied first by Beauville \cite{Beau83b}. Let $S$ be a K3 of degree $6$, that is, S is the intersection of a quadric and a cubic in $\PP^4$. A general element in $S^{[3]}$ defines a $2$-plane in $\PP^4$ and its intersection with $S$ is a scheme of length $6$. Thus, taking the residual length $3$ subscheme defines a birational involution $S^{[3]} \dashedrightarrow S^{[3]}$. Following Debarre (\cite[Th\'{e}or\`{e}me 4.1]{Deb84}), the action on $H^2$ is given by the reflection at the element $\alpha_0:=H-\delta$. In particular, it sends $\delta$ to $4H-5\delta$, where $H$ is the class of a hyperplane section. Thus the generator $[\delta/4]$ of the discriminant group is mapped to its negative.

A detailed analysis of the known results on the precise structure of the movable and the ample cone shows the following: The closure of the  movable cone of $S^{[3]}$ is spanned by $H$ and $5H-6\delta$. Thus it contains the class $\alpha_0$ (which is invariant under the involution). However, $\alpha_0$ is orthogonal to $2H-3\delta$, which is a class of square $-12$ and divisibility $2$, hence a wall of the movable cone. This implies that this involution is not regular on any manifold $X$ birational to $S^{[3]}$.\\ As it is often the case, we can obtain a regular involution by paying the price of dealing with a singular manifold (which is just the contraction of the $\mathbb{P}^3$ giving the indeterminacy locus). This manifold can be seen as a moduli space of stable object on the $K3$ surface $S$ with a bad choice of a polarisation, see \cite{Mat14} for a closely related example. \\
Nevertheless, this example can be seen as a degeneration of a twenty dimensional family of manifolds with an involution acting as $-1$ on the discriminant group. Indeed, if we take local deformations of $S^{[3]}$ where $H-\delta$ stays algebraic, the very general element of this family has ample cone which coincides with the positive cone and the birational involution deforms to a regular involution on it.


\end{ex}

\section{O'Grady's Examples}\label{sec:kieran_case}
In this section we introduce a notion of induced automorphisms and give a numerical criterion for manifolds of $Og_{10}$ or $Og_{6}$-type. Part of our results are based on \cite{MW14}, where the kernel of the cohomological representation $\nu\,:\, Aut(X) \rightarrow O(H^2(X))$ is determined for $X$ a manifold as above.

We fix a projective $K3$ surface $S$ and consider a primitive Mukai vector $w\in H^*(S,\Z)$ of square $2$. Set $v=2w$. The moduli space $M(2w)$ (we tacitly assume the choice of some $w$-generic polarisation) is of dimension $10$ and its singular locus $\Sigma:=M(2w)_{sing}$ consists exactly of the points corresponding to $S$-equivalence classes of sheaves of the form $\mc{F}_1\oplus \mc{F}_2$, where $\mc{F}_1$ and $\mc{F}_2$ are stable sheaves in $M(w)$. The blow up $\widetilde{M}(2w)$ of $M(2w)$ along $\Sigma$ with its reduced scheme structure yields a symplectic resolution (cf.\ \cite{LS06}). Perego and Rapagnetta showed that $M(2w)$ is a $2$-factorial symplectic manifold admitting an appropriate analogue of a Beauville$-$Bogomolov form and a pure weight-two Hodge structure on $H^2(M(2w),\Z)$. 
 Furthermore they computed the Hodge and lattice structure on $H^2(\widetilde{M}(2w),\Z)$. They introduced the following notation: Set
\[\Gamma_v:= \{(\alpha,k\sigma/2)\in (v^\perp)^\vee\oplus_\perp \Z\sigma/2\mid k\in 2\Z \Leftrightarrow \alpha\in v^\perp\}.\]
By declaring $\sigma$ to be of type $(1,1)$ and setting $\sigma^2:=-6$ we obtain a pure weight-two Hodge structure and lattice structure on $\Gamma_v$ induced by the corresponding structures on $H^*(S,\Z)$. Note that, by definition, we have $\sigma^\perp (\subset\Gamma_v)\cong v^\perp (\subset H^*(S,\Z)).$

\begin{thm}
There is a Hodge isometry of pure weight-two Hodge structures
\begin{equation}\label{eqhodgeisom}
\Gamma_v  \cong H^2(\widetilde{M}(2w),\Z),
\end{equation}
where the lattice structure on the right hand side is the Beauville$-$Bogomolov form.
\end{thm}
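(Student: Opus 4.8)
The plan is to follow the strategy already used for manifolds of $K3^{[n]}$-type (Markman's description of $H^2$ via embeddings into the Mukai lattice) and for singular moduli spaces in the literature of Perego and Rapagnetta, adapting it to the blow-up $\widetilde{M}(2w)$. First I would recall that Perego and Rapagnetta have identified $H^2(\widetilde M(2w),\Z)$, as a lattice, with the orthogonal direct sum of $H^2(M(2w),\Z)$ (equipped with its $2$-factorial Beauville$-$Bogomolov form) and the rank-one lattice generated by the class $e$ of the exceptional divisor of the symplectic resolution $\widetilde M(2w)\to M(2w)$, and that $H^2(M(2w),\Z)$ is Hodge-isometric to $v^\perp\subset H^*(S,\Z)$. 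So the content of the theorem is really that gluing $v^\perp$ with $\Z e$ along the non-trivial overlattice relation produces exactly $\Gamma_v$, compatibly with Hodge structures.

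The key steps, in order, would be: (1) compute $e^2$ and the divisibility of $e$ in $H^2(\widetilde M(2w),\Z)$ — here I would invoke that for this type of symplectic resolution the exceptional class has square $-6$ and is $2$-divisible (this is exactly why $\sigma^2=-6$ and the index-$2$ condition appear in the definition of $\Gamma_v$), so set $\sigma := e$; (2) observe that $\sigma^\perp$ inside $H^2(\widetilde M(2w),\Z)$ is the image of $H^2(M(2w),\Z)\cong v^\perp$, matching $\sigma^\perp\subset\Gamma_v\cong v^\perp$; (3) identify the full lattice $H^2(\widetilde M(2w),\Z)$ as an overlattice of $v^\perp\oplus\Z\sigma$ by producing the glue vector: a divisor class whose restriction generates the order-$2$ quotient, which exists precisely because both $v^\perp$ (as $v=2w$ with $w^2=2$) and $\Z\sigma$ have $2$ in their discriminant and the resolution forces $(\alpha+\sigma/2)$-type classes to be integral exactly when $\alpha\notin v^\perp$, $\alpha\in (v^\perp)^\vee$; this reproduces the defining condition of $\Gamma_v$ verbatim; (4) check that the Hodge structures match: $\sigma$ is of type $(1,1)$ on both sides (the exceptional divisor is algebraic), and on $\sigma^\perp$ the Hodge structure is the one transported from $v^\perp\subset H^*(S,\Z)$, which is how the Hodge structure on $\Gamma_v$ was defined; (5) conclude that the resulting isometry is automatically an isometry of weight-two Hodge structures, i.e.\ \eqref{eqhodgeisom} holds.

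I expect the main obstacle to be step (3): pinning down the exact overlattice relation, i.e.\ exhibiting the correct glue vector in $H^2(\widetilde M(2w),\Z)$ and verifying that the index-$2$ extension it defines is the one described by $\Gamma_v$ rather than some other even overlattice of $v^\perp\oplus\langle-6\rangle$. This requires knowing precisely how the exceptional divisor class interacts with the pullback of divisors from $M(2w)$, which in turn rests on the explicit geometry of the blow-up of $\Sigma$ and the computation of Perego and Rapagnetta; the divisibility and square of $\sigma$ are not enough on their own, since one must also know that the quotient $H^2(\widetilde M(2w),\Z)/(v^\perp\oplus\Z\sigma)$ is generated by a class of the stated form. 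Everything else — surjectivity of the period map for $K3$'s, Markman's monodromy results, and the Perego$-$Rapagnetta Hodge-structure statement cited just before the theorem — is available and plugs in routinely once the lattice bookkeeping is settled.
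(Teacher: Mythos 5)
The paper offers no argument for this theorem: its ``proof'' is the single citation \cite[Theorem 3.4]{PR13}, so your proposal and the paper ultimately rest on exactly the same external input, and your sketch is a fair reconstruction of what Perego--Rapagnetta actually do (identify $H^2(M(2w),\Z)$ with $v^\perp$, adjoin the class coming from the exceptional locus, and determine the index-two gluing). Two cautions on the sketch itself: your opening claim that $H^2(\widetilde M(2w),\Z)$ is the \emph{orthogonal direct sum} of $H^2(M(2w),\Z)$ and $\Z e$ contradicts your (correct) step (3), where it is an index-two overlattice of $v^\perp\oplus\Z\sigma$ --- the overlattice structure is precisely what encodes the $2$-factoriality of $M(2w)$; and setting $\sigma:=e$ with $e$ the exceptional divisor class needs care, since in this situation the exceptional divisor is a multiple of the primitive class $\sigma$ of square $-6$, so the square and divisibility you ``invoke'' in step (1) are exactly the computations that constitute the real content of \cite{PR13} and are not supplied here.
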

\begin{proof}
\cite[Theorem 3.4]{PR13}
\end{proof}

Now, assume we have an automorphism of the surface $S$. It naturally induces an isometry of $H^*(S,\Z)$. If $\varphi$ fixes the Mukai vector $w$, we obtain an isometry of $\Gamma_v$ by demanding $\sigma$ to be fixed.

\begin{defn}
Let $\widetilde{M}(2w)$ be a desingularised moduli space of $Og_{10}$-type and let $G\subset \aut(\widetilde{M}(2w))$. We call $G$ \em numerically induced \em if the following hold:
\begin{itemize}
\item The induced action on $H^2(\widetilde{M}(2w),\Z)$ fixes $\sigma$ (cf.\ eq.\ (\ref{eqhodgeisom} )).
\item The induced action on $\sigma^\perp$ is numerically induced in the sense of definition \Ref{defn}{k3n_induced}.
\end{itemize}
\end{defn}

\begin{prop}\label{prop:ogrady_induced}
Let $\varphi$ be an automorphism of the surface $S$ fixing the Mukai vector $w$ (and the chosen polarisation). It induces an automorphism $\tilde{\varphi}$ on the desingularised moduli space $\widetilde{M}(2w)$ such that the isometry (\ref{eqhodgeisom}) is $\varphi-\tilde{\varphi}$-equivariant. In particular, the action of $\tilde{\varphi}$ is numerically induced.
\end{prop}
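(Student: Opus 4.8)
The plan is to mimic, essentially verbatim, the construction already carried out for moduli spaces of $K3^{[n]}$-type in \Ref{prop}{exist_induced} and \Ref{lem}{equivariant_induced_iso}, inserting the desingularisation step at the end. First I would note that an automorphism $\varphi$ of $S$ fixing $w$ (and the chosen $w$-generic polarisation) fixes $v=2w$ and preserves stability, hence by pullback along $\varphi$ acts on the moduli functor of Gieseker-semistable sheaves with Mukai vector $v$; this produces an automorphism of the (singular) moduli space $M(2w)$. Crucially, the action preserves the singular locus $\Sigma$: the pullback of a polystable sheaf $\mc{F}_1\oplus\mc{F}_2$ with $\mc{F}_i\in M(w)$ is $\varphi^*\mc{F}_1\oplus\varphi^*\mc{F}_2$, and $\varphi^*\mc{F}_i\in M(w)$ since $\varphi$ fixes $w$; so the automorphism descends to the blow-up $\widetilde{M}(2w)$ of $M(2w)$ along $\Sigma$ (with reduced structure), giving the desired $\tilde\varphi\in\aut(\widetilde{M}(2w))$.

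Next I would establish the equivariance of the Hodge isometry (\ref{eqhodgeisom}). The isometry $\sigma^\perp\subset\Gamma_v \cong v^\perp\subset H^*(S,\Z)$ is, by Perego--Rapagnetta's construction, induced by (a quasi-universal family and) the same Fourier--Mukai / correspondence machinery as in the smooth case, so the argument of \Ref{lem}{equivariant_induced_iso} applies on $\sigma^\perp$: since the (quasi-)universal family on $\widetilde{M}(2w)\times S$ is $(\tilde\varphi\times\varphi)$-invariant by construction of $\tilde\varphi$, the induced map on $\sigma^\perp$ agrees with $\varphi^*$ on $v^\perp$. It then remains to check that $\tilde\varphi^*$ fixes the distinguished class $\sigma$ (equivalently, fixes the exceptional divisor $E$ of the symplectic resolution up to the identification $\sigma\sim E/2$ or whatever normalisation PR use). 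This follows because $E$ is canonically defined — it is the exceptional divisor of the blow-up along $\Sigma$, and $\tilde\varphi$ was constructed precisely so as to lift the automorphism of $M(2w)$ preserving $\Sigma$, hence $\tilde\varphi(E)=E$; as $\tilde\varphi^*$ is an isometry of $H^2$ preserving the orthogonal decomposition $\sigma^\perp\oplus_\perp\Z\sigma/2$ (rationally) and fixing the rank-one summand, it fixes $\sigma$ itself (the only ambiguity being a sign, which is ruled out because $\tilde\varphi^*$ preserves the effective divisor $E$, not $-E$).

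Finally, with both bullets of the definition of numerically induced verified — $\tilde\varphi^*$ fixes $\sigma$, and the induced action on $\sigma^\perp\cong v^\perp = w^\perp$ (note $v^\perp=w^\perp$ since $v=2w$) is by construction induced from an automorphism of a $K3$, hence numerically induced in the sense of \Ref{defn}{k3n_induced} — the conclusion is immediate. The main obstacle I anticipate is the second paragraph: pinning down that $\tilde\varphi^*$ genuinely fixes $\sigma$ on the nose rather than merely preserving the line $\C\sigma$, which requires being careful about the precise normalisation of $\sigma$ in the Perego--Rapagnetta model and about orientation/effectivity; once the geometric interpretation of $\sigma$ as (half of) the class of the canonical exceptional divisor is in hand, this is forced. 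The descent of the automorphism to the blow-up is routine (functoriality of blowing up along a subscheme preserved by the automorphism), and the equivariance on $\sigma^\perp$ is a direct transcription of \Ref{lem}{equivariant_induced_iso}.
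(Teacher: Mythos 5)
Your proposal is correct and follows essentially the same route as the paper: obtain the induced automorphism on the singular space $M(2w)$ via \Ref{prop}{exist_induced}, observe that $\Sigma$ is preserved so the automorphism lifts to the blow-up, invoke \Ref{lem}{equivariant_induced_iso} for equivariance on $v^\perp\cong\sigma^\perp$, and note that the exceptional class $\sigma$ is fixed. The paper is slightly more explicit about the induced action on the normal bundle $\mc{N}_{\Sigma|M(2w)}$ (identifying its fibres with $\Ext^1(\mc{F}_1,\mc{F}_2)\oplus\Ext^1(\mc{F}_2,\mc{F}_1)$), while you are more explicit about why $\sigma$ is fixed on the nose; both are fine.
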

\begin{proof}
By \Ref{prop}{exist_induced} we obtain an induced automorphism $\hat{\varphi}$ on the singular space $M(2w)$ and by \Ref{lem}{equivariant_induced_iso} the Hodge isometry $v^\perp\cong H^2(M(2w),\Z)$ is $\varphi-\hat{\varphi}$-equivariant. Furthermore the singular locus $\Sigma$ is certainly $\hat{\varphi}$-invariant and we have a well defined induced action on the normal bundle $\mc{N}:=\mc{N}_{\Sigma|M(2w)}$. Indeed, the fibre $\mc{N}_{\mc{F}_1,\mc{F}_2}$ of $\mc{N}$ over $\mc{F}_1\oplus \mc{F}_2$ is isomorphic to $\Ext^1(\mc{F}_1,\mc{F}_2)\oplus \Ext^1(\mc{F}_2,\mc{F}_1)$ and the map $\mc{N}_{\mc{F}_1,\mc{F}_2}\rightarrow \mc{N}_{\varphi^*\mc{F}_1,\varphi^*\mc{F}_2}$ is the obvious pullback map. Thus we get an induced action of $\varphi$ on the blow up $\widetilde{M}(2w)$. We immediately deduce that the  class $\sigma$ corresponding to the exceptional divisor of the blow up is fixed by the induced action.
\end{proof}

\begin{defn}
Let $\widetilde{M}(2w)$ be the symplectic resolution of the singular moduli space $M(2w)$ for a primitive Mukai vector $w$ of square $2$ on a projective $K3$ surface $S$. Let $G\subset \aut(\widetilde{M}(2w)$. We say that $G$ is an \em induced group of automorphisms \em if $G\subset \aut(S)$ with $w\in H^*(S,\Z)^G$ and the induced action on $\widetilde{M}(2w)$ coincides with the given one.
\end{defn}

\begin{prop}
Let $w$ be a primitive Mukai vector of square $2$ and let $\widetilde{M}(2w)$ be the symplectic resolution of singularities of $M(2w)$ for a projective $K3$ surface $S$. Let $G\subset \aut(\widetilde{M}(2w))$ be a finite subgroup. Then $G\subset \aut(S)$ and $G$ is induced if and only if it is numerically induced.
\end{prop}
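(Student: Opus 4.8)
The plan is to reduce both implications to the corresponding facts for manifolds of \kntipo, the only genuinely new ingredient being a Torelli-type argument for manifolds of $Og_{10}$-type. One implication is essentially \Ref{prop}{ogrady_induced}: if $G\subset\aut(S)$ with $w\in H^*(S,\Z)^G$ and the induced action on $\widetilde{M}(2w)$ is the given one, then \Ref{prop}{ogrady_induced} shows that the Hodge isometry \eqref{eqhodgeisom} is $G$-equivariant, so $G$ fixes $\sigma$. Identifying $H^*(S,\Z)$ with $\Lambda_{24}$, the group $G$ acts on $\Lambda_{24}$ fixing $w$, hence preserving $w^\perp\cong\sigma^\perp$; this is the extension furnished by \Ref{lem}{aut_extend} (the induced action on $A_{w^\perp}\cong\Z/2\Z$ is automatically trivial), while the $(1,1)$-part of $T_G(\Lambda_{24})$ contains $H^0(S)\oplus H^4(S)$, which is a copy of $U$. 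Hence the action on $\sigma^\perp$ is numerically induced in the sense of \Ref{defn}{k3n_induced}, and therefore $G$ is numerically induced.

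For the converse, assume $G$ is numerically induced, so that $G$ fixes $\sigma$ and acts on $\sigma^\perp\subset H^2(\widetilde{M}(2w),\Z)$ as a numerically induced group in the sense of \Ref{defn}{k3n_induced}. By the description of $\Gamma_v$ in \cite{PR13}, the lattice $\sigma^\perp$ is isometric to $U^{\oplus 3}\oplus E_8(-1)^{\oplus 2}\oplus\langle-2\rangle$, \ie to $H^2$ of a manifold of \ktipo, and it carries the weight-two Hodge structure induced from $\widetilde{M}(2w)$. Running the construction in the proof of \Ref{thm}{k3n_induced} on this lattice-with-Hodge-structure (using surjectivity of the period map \Ref{thm}{surj_period} and \Ref{prop}{k3n_moduli}, and treating the symplectic, non-symplectic and mixed cases exactly as there) produces a projective $K3$ surface $S'$ with $G\subset\aut(S')$, a $G$-invariant primitive Mukai vector $w'$ of square $2$ (which we may take to be positive), and a $G$-equivariant Hodge isometry $(w')^\perp\cong\sigma^\perp$. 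Forming $M(2w')$ for a suitable polarisation and its symplectic resolution $\widetilde{M}(2w')$, \Ref{prop}{ogrady_induced} gives an induced $G$-action on $\widetilde{M}(2w')$ whose cohomological action, under \eqref{eqhodgeisom}, is the one above on $\sigma^\perp$ together with $\sigma\mapsto\sigma$. Since $\Gamma_{2w'}$ is the overlattice of $(w')^\perp\oplus\Z\sigma$ determined by $(w')^\perp$ and by the value $-6$ of the square of the exceptional class, the isometry $(w')^\perp\cong\sigma^\perp$ extends, uniquely once the sign of the exceptional class is fixed, to a $G$-equivariant Hodge isometry $H^2(\widetilde{M}(2w),\Z)\cong H^2(\widetilde{M}(2w'),\Z)$. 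Granting that this isometry is a parallel transport operator, the Global Torelli theorem \Ref{thm}{global_torelli} together with the usual argument with \kahl classes (see below) provides a $G$-equivariant isomorphism $\widetilde{M}(2w)\cong\widetilde{M}(2w')$; transporting the induced action back and comparing on $H^2$, where the two actions agree by construction, and using that $\nu$ is injective for manifolds of $Og_{10}$-type by \cite{MW14}, we conclude that the two $G$-actions coincide. Thus $G$ is induced.

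The crux is the passage from the $G$-equivariant Hodge isometry $H^2(\widetilde{M}(2w),\Z)\cong H^2(\widetilde{M}(2w'),\Z)$ to an honest $G$-equivariant isomorphism of the two resolved moduli spaces. This splits into (i) checking that the isometry lies in the monodromy group, which one expects to follow from the fact that it is assembled from a genuine Hodge isometry of $K3$ surfaces in a way compatible with the Fourier$-$Mukai and blow-up constructions, and (ii) upgrading the birational map produced by \Ref{thm}{global_torelli} to an isomorphism, for which one compares the movable cones of $\widetilde{M}(2w)$ and $\widetilde{M}(2w')$ using \cite{BM13} and \cite{PR13}, observing that the wall cut out by $\sigma$ is matched under the isometry. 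By contrast, the remaining steps---$G$ fixing $\sigma$, the identification of the lattice $\sigma^\perp$, the extension of the isometry to the $\Gamma$'s, and the transport of the group action---are routine given \cite{PR13} and \Ref{thm}{k3n_induced}.
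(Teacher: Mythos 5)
The ``only if'' direction of your argument is fine and is exactly the paper's (\Ref{prop}{ogrady_induced} plus the observations that the action fixes $w$, is automatically trivial on $A_{w^\perp}\cong\Z/2\Z$, and fixes the algebraic copy $H^0(S)\oplus H^4(S)\cong U$). The problem is the converse, where your proof pivots on a Torelli-type argument for manifolds of $Og_{10}$-type: you must know that the glued Hodge isometry $H^2(\widetilde{M}(2w),\Z)\cong H^2(\widetilde{M}(2w'),\Z)$ is a parallel transport operator, and you must upgrade the birational map furnished by \Ref{thm}{global_torelli} to a biregular, $G$-equivariant isomorphism. You explicitly leave both points open (``granting that \dots'', ``one expects \dots''), and neither can be closed with the tools available in the paper: the characterization of parallel transport operators is given only for \kntiposp (and partially for Kummer type), the monodromy group of $Og_{10}$-type manifolds is not determined here, and \cite{BM13} governs smooth Bridgeland moduli spaces, not the O'Grady resolution, so the wall/movable-cone comparison you invoke is not justified. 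As written, the ``if'' direction is therefore not proved. Moreover, even granting those two steps, your construction yields a surface $S'$ and a vector $w'$ that are a priori only derived-equivalent to $(S,w)$ (the Hodge isometry $(w')^\perp\cong\sigma^\perp$ extends to one of Mukai lattices, nothing more), whereas the proposition and the definition of ``induced'' in this section refer to the original pair $(S,w)$; you would still owe an identification $S'\cong S$ compatible with $w'\mapsto w$.

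The paper's proof avoids the detour entirely by never leaving the given surface and moduli space. Since $\sigma^\perp\cong w^\perp$ sits inside $H^*(S,\Z)$ carrying the Hodge structure of the original $S$, the numerically induced hypothesis is used to produce an automorphism $\varphi$ of $S$ itself fixing $w$ (and the polarisation); then $\tilde\varphi$ and the given automorphism $\psi$ are two automorphisms of the \emph{same} manifold $\widetilde{M}(2w)$ with the same action on $H^2(\widetilde{M}(2w),\Z)$, hence they coincide by the injectivity of the cohomological representation proved in \cite{MW14} --- the one step of your argument that survives intact, but applied without any intermediate Torelli statement for $Og_{10}$-type manifolds.
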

\begin{proof}
The `only if'-part is the content of \Ref{prop}{ogrady_induced}. For the other direction let $\psi$ be an automorphism of $\widetilde{M}(2w)$ which is numerically induced. We thus obtain an induced isometry of $\sigma^\perp$ which is induced in the sense of \Ref{defn}{k3n_induced}. Thus we obtain an isomorphism $\varphi$ on the underlying surface $S$. Now, $\psi$ and the induced automorphism $\tilde{\varphi}$ on $\widetilde{M}(2w)$ have the same action on the second integral cohomology. By \cite{MW14}, the cohomological representation is injective, hence we are done.
\end{proof}


\begin{rmk}\label{rmk:lattices_kum_ogrady}
Statements similar to \Ref{prop}{k3n_moduli} and \Ref{prop}{kummer_moduli} can be expected to hold true for O'Grady-type manifolds: Let $X$ be deformation equivalent to $Og_{10}$. 
Embed $H^2(X,\Z)$ into $\Lambda_{24}\oplus U$ 
 and endow the latter with the induced Hodge structure. Suppose the $(1,1)$-part of $\Lambda_{24}\oplus U$ 
  contains $U^{\oplus 2}$ 
   as a direct summand. Then we expect that $X$ is given as the desingularisation of 
    a moduli space of stable objects on a $K3$ 
     surface.
\end{rmk}

Copying the appropriate definitions from the $Og_{10}$-case, we can conclude in a similar way in the abelian surface case:

\begin{cor}
Let $w$ be a primitive Mukai vector of square $2$ on a projective abelian surface $A$ and let $\widetilde{K}(2w)$ be the desingularised Albanese fibre of $M(2w)$. Let $G\subset \aut(\widetilde{K}(2w))$ be a finite subgroup. Then $G\subset\aut(A)\times A^\vee[2]$ and $G$ is induced if and only if it is numerically induced.
\end{cor}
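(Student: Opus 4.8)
The strategy is to mimic, almost verbatim, the argument already given for the $Og_{10}$-case, transporting each ingredient to the abelian surface setting. First I would recall the analogue of Perego--Rapagnetta's description: for a primitive Mukai vector $w$ of square $2$ on a projective abelian surface $A$, the Albanese fibre $K(2w)$ of the Gieseker moduli space $M(2w)$ admits a symplectic resolution $\widetilde{K}(2w)\to K(2w)$ of $Og_6$-type, and there is a Hodge isometry $\Gamma_w \cong H^2(\widetilde{K}(2w),\Z)$ where $\Gamma_w$ is built from $\sigma^\perp\cong v^\perp\cap(\text{Kummer lattice})$ together with a class $\sigma$ of square $-2$ coming from the exceptional divisor; this is the $Og_6$-analogue of the theorem quoted from \cite{PR13}, and one also needs the injectivity of the cohomological representation $\nu\colon\aut(\widetilde{K}(2w))\to O(H^2)$, which is precisely the content of \cite{MW14} cited above. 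With these two inputs in hand, the notion of a \emph{numerically induced} group $G\subset\aut(\widetilde{K}(2w))$ is copied from the $Og_{10}$-case: $G$ fixes $\sigma$ and acts on $\sigma^\perp$ in a way that is numerically induced in the sense of the Kummer-type analogue of \Ref{defn}{k3n_induced}.

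**Main steps.** The `only if' direction is the abelian-surface analogue of \Ref{prop}{ogrady_induced}: if $G\subset\aut(A)$ is a finite group of automorphisms preserving the origin, fixing $w$ and the chosen polarisation, then by \Ref{prop}{exist_induced} (in its abelian-surface incarnation, cf.\ \Ref{ssec}{ind_aut}) and the remarks there it induces automorphisms on $M(2w)$ respecting the Albanese fibre, hence on $K(2w)$; the singular locus $\Sigma$ and the normal bundle $\mc{N}_{\Sigma|K(2w)}$ carry the obvious induced action (fibrewise the pullback on $\Ext^1(\mc{F}_1,\mc{F}_2)\oplus\Ext^1(\mc{F}_2,\mc{F}_1)$), so the action lifts to the blow-up and fixes $\sigma$; by \Ref{lem}{equivariant_induced_iso} the induced action on $\sigma^\perp\cong v^\perp$ is the one induced on the underlying surface, which is numerically induced by the Kummer-type version of \Ref{thm}{k3n_induced} (valid since $n+1=2$ is a prime power here). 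Moreover one must also allow the translations by $2$-torsion points $A^\vee[2]$, which act on $K(2w)$ and hence explain the factor $A^\vee[2]$ in the statement; these act trivially on $H^2$, so they are automatically compatible with the numerical condition. For the `if' direction, given a numerically induced finite $G\subset\aut(\widetilde{K}(2w))$, the numerical condition on $\sigma^\perp$ together with the Kummer-type analogue of \Ref{prop}{kummer_moduli} (again available because $n+1=2$ is a prime power) produces an abelian surface $A$ on which $G$ acts and a $G$-invariant Mukai vector realising $\widetilde{K}(2w)$; the induced automorphisms $\tilde\varphi$ then agree with the given $\psi$ on $H^2$, and injectivity of $\nu$ from \cite{MW14} forces $\psi=\tilde\varphi$. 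Finally, one records that $G\subset\aut(A)\times A^\vee[2]$ rather than merely $\aut(A)$ because the kernel of $\nu$ for $Og_6$-type is generated by the $2$-torsion translations (again \cite{MW14}).

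**Main obstacle.** The genuinely nontrivial point is availability of the two black-box inputs in the $Og_6$ (abelian) setting: (i) the Perego--Rapagnetta-style Hodge and lattice description of $H^2(\widetilde{K}(2w),\Z)$ via $\Gamma_w$, and (ii) the injectivity, up to the $2$-torsion translations, of the cohomological representation for $Og_6$-type manifolds. If these are genuinely established in \cite{MW14} (and the $Og_6$-analogue of \cite{PR13}), the rest of the proof is a routine transcription: one simply replaces `$K3$ surface' by `abelian surface', `$\Lambda_{24}$' by `$\Lambda_8$', uses that $n+1=2$ is a prime power so that \Ref{prop}{kummer_moduli} and \Ref{thm}{k3n_induced} apply without the conjectural caveat, and carries the extra bookkeeping of the $A^\vee[2]$-factor through every step. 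No new geometric idea beyond the $Og_{10}$-case is needed; the proof should read \textquotedblleft copying the appropriate definitions, the argument is identical to that of the preceding proposition, replacing $K3$ surfaces by abelian surfaces and using \cite{MW14} for the injectivity of $\nu$ modulo $2$-torsion translations.\textquotedblright
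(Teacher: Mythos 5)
Your proposal matches the paper's approach exactly: the paper offers no separate proof of this corollary beyond the phrase ``copying the appropriate definitions from the $Og_{10}$-case, we can conclude in a similar way,'' together with the remark that the kernel of $\nu$ for $Og_6$-type is $A[2]\times A^\vee[2]$ by \cite{MW14}, which is precisely the transcription you carry out. Your identification of the two black-box inputs (the Perego--Rapagnetta-style description of $H^2$ and the computation of $\ker\nu$) and of the origin of the $A^\vee[2]$ factor is exactly what the paper relies on.
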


\begin{rmk}
We remark that in the $Og_{6}$-type case the kernel of the cohomological representation $\nu$ is $A[2]\times A^\vee[2]$, as proven in \cite{MW14}, and this explains the presence of $A^\vee[2]$ in the above corollary.
\end{rmk}

\section{Applications}\label{sec:appl}

\subsection{Non-symplectic automorphisms of prime order on manifolds of \ktipo}
In \cite{BCS14} non-symplectic automorphisms of prime order (different from $5$) on manifolds of \ktiposp were analysed and a full lattice-theoretic picture was given. If the order $p$ of the automorphism is equal to $7,$ $11,$ $13,$ $17$ or $19,$ they have shown that all\footnote{Note that there is one interesting case for $p=13$ where it is not known whether a geometric realisation exists.}lattices can be realised by natural automorphisms.  Fixing one such natural automorphism, by a similar reasoning as in the proof of Corollary 2.11 in \cite{OW13} we see that any other family of automorphisms with an equivalent action on $H^2$ is obtained as a (global) flop. The flopped family consists of moduli spaces of stable objects by \cite{BM13} and the automorphisms are induced. 

The same holds true for most of the cases if $p=3$. There are two remaining cases corresponding to the lattices $T\cong \langle 6\rangle$ and $T\cong \langle 6\rangle\oplus E_6^\vee(-3)$. Both lattices are realised as invariant lattices of automorphisms on the Fano variety of lines on a cubic fourfold (cf.\ \cite[Sec.\ 6.2]{BCS14}).

\begin{lem}
Both lattices above do not correspond to induced automorphism.
\end{lem}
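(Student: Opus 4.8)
The plan is to translate the statement ``$T$ corresponds to an induced automorphism'' into the numerical criterion of Definition \ref{defn:k3n_induced}, and then show the latter fails. Recall that an order-$3$ non-symplectic automorphism $g$ on a manifold $X$ of \ktipo{} with invariant lattice $T_g(X)=T$ has co-invariant lattice $S_g(X)=T^\perp$ inside $H^2(X,\Z)\cong U^{\oplus 3}\oplus E_8(-1)^{\oplus 2}\oplus\langle -2\rangle$. If $g$ were induced (hence numerically induced) it would act trivially on $A_X$ and, extending to the Mukai lattice $\Lambda_{24}$ by \Ref{lem}{aut_extend}, we would have $S_g(\Lambda_{24})\cong S_g(X)$ and the $(1,1)$-part of $T_g(\Lambda_{24})$ containing a hyperbolic plane $U$ as an orthogonal direct summand. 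Since $T_g(\Lambda_{24})$ is the orthogonal complement of $S_g(X)$ in $\Lambda_{24}$, the plan is to compute this complement for the two candidate lattices and check that it does not split off a copy of $U$.

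The key computation is thus: for each of the two co-invariant lattices $S:=T^\perp\subset H^2(X,\Z)$, determine $S^\perp$ inside $\Lambda_{24}$. Since $H^2(X,\Z)$ embeds primitively into $\Lambda_{24}$ with orthogonal complement $\langle 2\rangle$ (the generator being the class $\delta$ of square $2n-2=2$), we have $S^\perp_{\Lambda_{24}} = (T\oplus\langle 2\rangle)^{\sim}$, an overlattice of $T\oplus\langle 2\rangle$ determined by the gluing; concretely, using \Ref{oss}{overlattice} together with Nikulin's existence and uniqueness results for primitive embeddings, one identifies the discriminant form of $T_g(\Lambda_{24})=S^\perp$. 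For $T\cong\langle 6\rangle$ the invariant lattice $T_g(\Lambda_{24})$ has rank $3$, so containing $U$ as a direct summand would force $T_g(\Lambda_{24})\cong U\oplus\langle d\rangle$ for some $d$; one then checks the signature is $(1,2)$ — wait, $T$ has signature $(1,0)$ and $\langle 2\rangle$ has signature $(1,0)$, so $T_g(\Lambda_{24})$ has signature $(1,2)$ after accounting for the glue, and $U\oplus\langle d\rangle$ has signature $(1,2)$ only if $d<0$; the obstruction then comes from the discriminant form, which for $U\oplus\langle d\rangle$ is that of $\langle d\rangle$ (cyclic of order $|d|$) while the actual discriminant form of $S^\perp$ — computable from that of $T$ and $\langle 2\rangle$ via the $3$-torsion constraint coming from \Ref{oss}{G_tors} — is not of that shape. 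For $T\cong\langle 6\rangle\oplus E_6^\vee(-3)$ the rank of $T_g(\Lambda_{24})$ is $8$, and the $3$-elementary-ish structure of $A_{T_g(\Lambda_{24})}$ (length too large, or $\delta$/discriminant-form reasons) prevents any splitting of a unimodular $U$ summand.

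The main obstacle I expect is the bookkeeping of discriminant forms: one must carefully compute the gluing of $T\oplus\langle 2\rangle$ inside $\Lambda_{24}$ (equivalently the discriminant form of the co-invariant lattice and its complement), using that $g$ has order $3$ so $A_{T_g(\Lambda_{24})}$ has exponent dividing $3$ on the relevant part by \Ref{oss}{G_tors}, together with the fact that $A_{\langle 2\rangle}=\Z/2\Z$ must be ``absorbed'' by the glue with $S$ since $\Lambda_{24}$ is unimodular. Once the discriminant form $q$ of $T_g(\Lambda_{24})$ is pinned down, the conclusion is immediate: if $T_g(\Lambda_{24})\cong U\oplus M$ then $q_{T_g(\Lambda_{24})}\cong q_M$, so $A_{T_g(\Lambda_{24})}$ would be generated by $\operatorname{rank}(M)=\operatorname{rank}(T)-1$ elements and carry the discriminant form of a lattice of that smaller rank and signature $(0,\operatorname{rank}(T)-1)$ shifted appropriately; checking this against the computed $q$ (via length, or the value of $\delta$, or a direct comparison of the finite quadratic forms) gives the contradiction. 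An alternative, perhaps cleaner route for the rank-$3$ case is to observe that $U\hookrightarrow T_g(\Lambda_{24})$ primitively would give $U^\perp\cong\langle d\rangle$ a rank-$1$ negative lattice, hence $T_g(\Lambda_{24})$ would represent $0$ nontrivially and its discriminant group would be cyclic; one then exhibits an explicit obstruction to $0$ being represented (or to cyclicity) using the known Gram matrix of $T\oplus\langle 2\rangle$ and its finitely many overlattices.
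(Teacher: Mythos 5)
Your overall strategy coincides with the paper's: extend $g$ to the Mukai lattice, identify $T_g(\Lambda_{24})$ as the saturation of $T\oplus\langle 2\rangle$ inside $\Lambda_{24}$ (the $\langle 2\rangle$ being $H^2(X,\Z)^\perp$), use that $A_{T_g(\Lambda_{24})}$ is $3$-torsion, and rule out a hyperbolic summand. But your execution of the first case contains a genuine error. For $T\cong\langle 6\rangle$ the lattice $T_g(\Lambda_{24})$ is an overlattice of $\langle 6\rangle\oplus\langle 2\rangle$ and therefore has rank $\rk T+1=2$ and signature $(1,0)+(1,0)=(2,0)$ --- not rank $3$ and signature $(1,2)$ as you assert (passing to an overlattice changes neither rank nor signature). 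The unique even $3$-elementary overlattice is obtained by adjoining the glue vector $(t+v)/2$ (which kills the $2$-torsion of $\Z/6\oplus\Z/2$), giving $T_g(\Lambda_{24})\cong A_2$: a positive definite lattice, which cannot contain the indefinite lattice $U$. Your wrong signature sends you after a rank-$3$ indefinite lattice $U\oplus\langle d\rangle$ that simply does not occur, and the discriminant-form comparison you sketch for it is never actually carried out.

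In the second case you get the rank right ($8$, signature $(2,6)$; the lattice is $A_2\oplus E_6^\vee(-3)$), but your proposed obstruction is not established and, as stated, is false: the length of $A_{T_g(\Lambda_{24})}$ is $1+5=6$, which \emph{equals} the rank of the putative orthogonal complement $M$ of a $U$-summand, so $l(A_M)\le\rk M$ is not violated. The correct final step is one move further: such an $M$ would be an even negative definite lattice of rank $6$ with $A_M\cong(\Z/3)^6$, forcing $M\cong N(3)$ for a unimodular definite $N$ of rank $6$, i.e.\ $M\cong\langle -3\rangle^{\oplus 6}$, which is odd --- a contradiction. More generally, in both cases the decisive computations are deferred (``the discriminant form is not of that shape'', ``length too large, or $\delta$/discriminant-form reasons''), so even setting aside the rank error the argument is incomplete as written.
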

\begin{proof}
We compute the invariant lattice of the induced isometry $g$ in $\Lambda_{24}$ by hand. In both cases we are looking for a $3$-elementary lattice of rank equal to $\rk T+1$ and of signature $(2,\rk-2)$. For $T\cong \langle 6\rangle$ we get $T_g(\Lambda_{24})\cong A_2$ and for $T\cong \langle 6\rangle\oplus E_6^\vee(-3)$ we have $T_g(\Lambda_{24})\cong A_2\oplus E_6^\vee(-3)$. Both lattices do not contain the hyperbolic plane, whence the lemma.
\end{proof}

Let us proceed to the study of automorphisms of order $2$. The action of non-symplectic involutions on the second cohomology of manifolds of \ktiposp can be fully classified with four invariants: the rank $r$ of the invariant lattice, the length $a$ of its discriminant group, the discrete invariant $\delta$ used for $2$-elementary lattices and a further discrete invariant which tells us whether the action is given as the action of a natural automorphism. Since the latter case gives always induced morphisms, the most interesting case is given by involutions which are not natural cf.\ \cite[Fig.\ 1]{BCS14}). We divide the big number of cases as follows:
\begin{itemize}
\item $a<r-3$: In this case $T$ contains a copy of $U$, thus the corresponding automorphisms are all induced. Nevertheless some interesting geometry can be observed, cf.\ \Ref{ex}{example_E_8}.
\item $a=r-2$: The cases $a=0$ and $a>0,$ $\delta_T=1$ correspond to the lattices $U\oplus \langle-2\rangle^{\oplus a}$, so the automorphisms are all induced.
\item $a=r,$ $\delta_T=1$: These examples can all be we realised by degenerations of double EPW-sextics, see \Ref{ssec}{ssec_EPW}.
\item The remaining cases are the lattices $U(2)$ and $U(2)\oplus E_8(2)$ for $a=r,$ $\delta_T=0$ and $U(2)\oplus D_4$ and $U\oplus E_8(2)$ for $0>a=r-2,$ $\delta_T=0$, the latter of which obviously corresponds to an induced automorphism.
\end{itemize}

\begin{rmk}
In \cite{OW13} the authors proved that for the natural involution having invariant lattice $\langle 2\rangle\oplus\langle-2\rangle$, the automorphism carries over to the flop. Note that flops are the only birational maps between manifolds of \ktiposp and they are given by reflecting at a wall orthogonal to a $-10$ class of divisor $2$. Since all the examples above, by construction, do not contain any algebraic classes of divisor $2$, no flops of the families exist. 
\end{rmk}

\begin{cor}
With the exception of three cases, there exists a geometric realisation of all families of prime order (different from $5$ and $23$) non-symplectic automorphisms on manifolds of \ktipo.
\end{cor}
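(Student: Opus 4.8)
The plan is to read the corollary off from the case-by-case analysis of this section, keeping careful track of the invariant lattices so that exactly three families are left unresolved. I would organise the argument by the order $p$ of the automorphism and, within each $p$, by the isometry class of the invariant lattice $T\subset H^2(X,\Z)$, following the lattice-theoretic classification of \cite{BCS14}. For $p\in\{7,11,17,19\}$ one simply invokes \cite{BCS14}: every numerically admissible action is realised by a natural automorphism of some $S^{[2]}$, which is already a geometric realisation. For $p=13$ the same is true for all but one family, and that single family --- the one singled out in the footnote above --- is set aside as the first of the three exceptions. The orders $p=5$ and $p=23$ are excluded by hypothesis of the statement: the former is not treated in \cite{BCS14}, while the unique $p=23$ family can be neither natural (a projective $K3$ surface has no non-symplectic automorphism of order $23$, since its invariant lattice would have rank $0$) nor induced in the sense of \Ref{defn}{k3n_induced}.

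For $p=3$ I would split the families into three groups. When $T$ contains a copy of $U$, \Ref{thm}{k3n_induced} applies directly and the automorphism is induced, hence geometric. When the action is obtained from a natural automorphism by a sequence of (global) flops, the flopped family consists of moduli spaces of stable objects by \Ref{thm}{bmstabsurj}, the automorphism is regular there, and we are done again. The only residual lattices are $\langle 6\rangle$ and $\langle 6\rangle\oplus E_6^\vee(-3)$; for these I would cite the realisation on the Fano variety of lines of a cubic fourfold (\cite[Sec.\ 6.2]{BCS14}) and note, using the Lemma above, that these two are geometric but \emph{not} induced, so they do not contribute to the count of exceptions.

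The bulk of the work is the case $p=2$, where I would run through the ranges of the invariants $(r,a,\delta_T)$ exactly as in the bullet points above: (i) if $a\le r-3$ then $U\subset T$, so \Ref{thm}{k3n_induced} gives an induced realisation; (ii) if $a=r-2$ with $a=0$, or with $\delta_T=1$, then $T\cong U\oplus\langle-2\rangle^{\oplus a}$, again with $U\subset T$; (iii) the families with $a=r$ and $\delta_T=1$ are realised by the degenerations of double EPW-sextics built from \Ref{prop}{fer} in \Ref{ssec}{ssec_EPW}; (iv) among the remaining lattices, all of which have $\delta_T=0$, the lattice $U\oplus E_8(2)$ contains $U$ and is induced, the lattice $U(2)\oplus D_4$ is realised by the constructions of \Ref{ssec}{ssec_EPW}, and the only families left without a geometric model are the two with invariant lattice $U(2)$ and $U(2)\oplus E_8(2)$. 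Adding the $p=13$ family, the complete list of exceptions has length three, as claimed.

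I expect the only genuine difficulty to be the bookkeeping. All the geometric and lattice-theoretic ingredients --- \Ref{thm}{k3n_induced}, \Ref{thm}{bmstabsurj}, the EPW degenerations of \Ref{ssec}{ssec_EPW}, and the Fano-of-lines examples --- are already available, so the statement stands or falls on enumerating the families of \cite{BCS14} correctly. The three points I would verify most carefully are: that the ``flop'' reduction for $p=3$ really lands on a \emph{regular} automorphism of a moduli space (one must check that the relevant wall is crossed and that the birational automorphism extends, as in the argument behind Corollary~2.11 of \cite{OW13}); that the constructions of \Ref{ssec}{ssec_EPW} do produce \emph{every} lattice attributed to them in (iii) and (iv), with the correct value of $\delta$; and that $U(2)$ and $U(2)\oplus E_8(2)$ are not secretly realised by some other construction, so that ``three'' is neither an over- nor an undercount.
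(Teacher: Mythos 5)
Your overall strategy---reading the corollary off the case-by-case analysis by order $p$ and by the invariants $(r,a,\delta_T)$ of the invariant lattice---is exactly the paper's, but your bookkeeping goes wrong at one concrete point, and it changes which three families end up as exceptions. You assert in step (iv) that the family with invariant lattice $U(2)\oplus D_4$ is realised by the EPW degenerations of \Ref{ssec}{ssec_EPW}. It cannot be: every lattice produced by that construction has $\delta=1$ and $a=r$ (each of the $k$ singular $K3$'s contributes a square $-2$, divisibility $1$ class, and the semiample square $2$ class forces integer non-valued discriminant form, so the construction yields precisely the eleven lattices $\langle 2\rangle\oplus\langle -2\rangle^{\oplus k}$, $k\le 10$), whereas $U(2)\oplus D_4$ has $\delta_T=0$ and $a=r-2$. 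This is consistent with the count at the start of \Ref{ssec}{ssec_EPW}: of the $14$ non-induced involution families, only $11$ are constructed there, leaving exactly the three $\delta_T=0$ lattices $U(2)$, $U(2)\oplus D_4$ and $U(2)\oplus E_8(2)$ as the paper's three exceptions. So your list of exceptions (the $p=13$ family, $U(2)$, $U(2)\oplus E_8(2)$) arrives at the number three only by cancelling two errors: you wrongly resolve $U(2)\oplus D_4$ and you promote the $p=13$ case into the count.

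On the $p=13$ point you do have your finger on a genuine looseness in the paper: the footnote concedes that one $p=13$ family has no known geometric realisation, yet the corollary's ``three'' refers only to the involution lattices. But if you want to include that family among the exceptions you must say so explicitly and then your total becomes four unless you repair the $U(2)\oplus D_4$ gap; as written, the two issues silently compensate. The rest of your argument (the $a\le r-3$ and $a=r-2$ reductions to $U\subset T$ plus \Ref{thm}{k3n_induced}, the flop argument for $p=3$, and the non-induced Fano-of-lines realisations for $\langle 6\rangle$ and $\langle 6\rangle\oplus E_6^\vee(-3)$) matches the paper and is fine.
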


\begin{rmk}\label{rmk:assignment}
It can be an interesting task to actually construct a $K3$ surface such that a given family of induced automorphisms as above can be realised by a moduli space of sheaves on it.

In general, we are given a manifold $X$ together with an induced involution with $2$-elementary lattice $T(X)$ with invariants $(r,a,\delta)$. Then it is easy to see that the invariant lattice $T(\Lambda_{24})$ has invariants $(r+1,a+1,1)$ and has signature $(2,r-1)$. Splitting off a copy of $U,$ we deduce that the invariant lattice $T(S)$ of the corresponding $K3$ surface $S$ must have invariants $(r-1,a+1,1)$. In this way we obtain an assignment from the set of lattices of the induced examples to the famous diagram of Nikulin \cite{Nik79} listing the invariant lattices of non-symplectic involutions on $K3$s. Note, that $\delta_{T(S)}$ is always equal to one, regardless of the value $\delta_{T(X)}$. Thus the above mentioned assignment is not injective. It says that on the same $K3$ surface we may choose two different Mukai vectors inducing different families in the classification.
\end{rmk}

\begin{ex}
Let us look a little closer at the case when $T_G(X)$ itself already contains a copy of the hyperbolic plane: $T_G(X)\cong U\oplus N,$ for some negative definite lattice $N$. Assume that we have $T_G(\Lambda_{24})\cong \langle2\rangle\oplus T_G(X)$ and $T_G(S)\cong \langle2\rangle \oplus N$. Thus $S$ is a (degenerate) double sextic $\pi\colon S\rightarrow \PP^2$ and the Mukai vector is orthogonal to the copy of $U$, e.g.\ $v=(0,H,0),$ where $H=\pi^*\mc{O}(1)$.
\end{ex}

\begin{ex}\label{ex:example_E_8}
Here we want to study the particularly nice cases when the invariants of $T_G(X)$ are $(10,8,\delta),$ $\delta=0,1$. This is one of the cases where the assignment mentioned in \Ref{rmk}{assignment} is not injective. It leads to a $K3$ with invariant lattice $T_G(S)\cong\langle2\rangle \oplus E_8(2)$. There is a very nice description of these $K3$ surfaces in \cite[Exa. 3.2]{vGS07}. These surfaces are double sextics (denote the covering involution by $i$) that additionally admit a symplectic involution $\iota$ commuting with $i$. The involution on $S$ that we have to consider is then the composition $\iota\circ i$. The two different Mukai vectors that we need to choose in order to construct the two families of induced automorphisms are as follows. First choose $v_1=(0,H,0)$ as above. Next, let $f\colon S\rightarrow S':=S/\iota\circ i$ denote the quotient. It is a degree $1$ del Pezzo surface. Denote by $O_{S'}(1)$ the pullback of the hyperplane section along the blow-ups. Then we choose $v_2:=(0,f^*O_{S'}(1),0).$ The map $f$, of course, exhibits the surface $S$ as a degenerate double sextic, where the branch curve has eight double points. These two different geometric points of view correspond to the lattice theoretic isometry $T_G(S)\cong \langle2\rangle\oplus E_8(-2)\cong \langle2\rangle\oplus \langle-2\rangle^{\oplus 8}$.
\end{ex}

\subsection{Non-induced involutions on double EPW-sextics}\label{ssec:ssec_EPW}
There are 14 families of non-symplectic involutions which are not induced. We will now contruct 11 of them as degenerations of double EPW-sextics and their canonical involution.
The construction is taken from \cite{Fer12} and we sketched it in \Ref{prop}{fer}. We take the $19-k$ dimensional family $\mathcal{B}_k$ of quartic surfaces in $\mathbb{P}^3$ having $k\leq 10$ ordinary double points in general position, and we consider the associated family of smooth $K3$ surfaces obtained by blowing up the singular points. The generic such surface has Picard lattice isometric to $(4)\oplus (-2)^{\oplus k}$. Then we take the family of their Hilbert schemes and we endow them with the involution sending a set of two points on a $K3$ surface $S$ to the other two points in the intersection $S\cap l$, where $l$ denotes the line the two points of $S$ span in $\PP^3$.  Applying \Ref{prop}{fer}, we extend the family $\mathcal{B}_k$ to a $20-k$ dimensional family $\mathcal{B}'_k$ whose generic member is the blowup of a double EPW-sextic in $k$ singular $K3$ surfaces of degree $2$, therefore it has a non-symplectic involution. Such involution fixes the class of the semiample divisor of these double sextics and fixes also all divisors corresponding to the resolutions of the singular $K3$s, which are linearly independent. Therefore the invariant lattice of the involution has rank $k+1$ and $\delta=1$. Moreover each singular $K3$ gives a divisor of square $-2$, as in \cite[Claim 3.8]{OGr12}, therefore we also have $a=r$.

\begin{rmk}
For $a=2$ the invariant lattice is isomorphic to $\langle2\rangle\oplus\langle-2\rangle$, where the generator of square $-2$ has divisibility $1$. Thus this family corresponds to one of the missing cases in the analysis of $19$-dimensional families of non-symplectic involutions in \cite{OW13}.
\end{rmk}

\begin{ex}
Let us consider the special case $k=8$. Here we have a family of double EPW sextics whose very general element has Picard lattice $(2)\oplus (-2)^8$. But this lattice is also isometric to $(2)\oplus E_8(-2)$, which is the lattice corresponding to a general double EPW sextic with an additional symplectic involution commuting with the covering involution, as constructed in \cite{Cam12}. Therefore we can see these double sextics as ordinary smooth double sextics with an appropriate change of polarization, much in analogy with the case of $K3$s of degree $2$ with a symplectic involution.
\end{ex}

\subsection{Lagrangian fibrations with a section}
Let $X$ be an \issp manifold of dimension $2n$ and let $\varphi\colon X\rightarrow B$ be a lagrangian fibration over a smooth space $B$. In \cite{GL13} the authors prove that $B=\mathbb{P}^n$. Let now $s\colon\mathbb{P}^n \rightarrow\,X$ be a section. If $X$ is a $K3$ surface, then the classes of the section and of the generic fibre generate a lattice isomorphic to $U$ inside the Picard group of $X$.\\
We can consider families of \issp manifolds with a lagrangian fibration with a section as a family of manifolds with a (maybe birational) non-symplectic involution $\alpha$, given by changing the sign on all smooth fibres. 
\begin{oss}
The existence of a section generically implies that the base of the fibration is smooth, without any assumption on it. Indeed, the section is one component of the fixed locus of the involution associated with the sign change, which is smooth for regular maps. This involution preserves the Picard lattice (hence the ample cone) under genericity assumptions and is therefore regular. With the additional hypothesis that the base of the fibration is normal, this was already proven in \cite[Theorem 7.2]{CMSB02} 
\end{oss}
An interesting remark is that having such a fibration imposes conditions on the Picard group also if the dimension of $X$ increases. Indeed, the presence of a lagrangian fibration implies that there exists a divisor $D\subset X$ equal to the pullback of the hyperplane section of $\mathbb{P}^n$. Since $D^n$ is the class of a fibre, we have $D^2=0$ with respect to the Beauville Bogomolov form. One more class in the Picard group is given by the section $s$: the class of a line $l$ inside $s(\mathbb{P}^n)$ is an extremal ray of the Mori cone and the intersection $D\cdot l$ is equal to $1$. Let $H_2(X,\mathbb{Z})\hookrightarrow H^2(X,\mathbb{Q})$ be the inclusion induced by the identification $H_2(X,\mathbb{Z})=H^2(X,\mathbb{Z})^\vee$ as lattices and let $T$ be the least positive multiple of $l$ which is a divisor, \ie $T/\div(T)=l$. Then we have $(T,D)=\div(T)$ and the lattice spanned by them is the invariant lattice of the non-symplectic involution $\alpha$.

\begin{prop}\label{prop:lagr_lattice}
Keep notation as above and assume that $X$ is a manifold of \kntipo. Then, if $n\leq 3$, the class of $T$ has square $-2(n+3)$ and $\div(T)=2$. For any $n$, the generic such $X$ has Picard group isometric to $U(2)$ if $n$ is odd and to $(2)\oplus (-2)$ otherwise.
\begin{proof}
As usual we embed $H^2(X,\mathbb{Z})$ inside $\Lambda_{24}$ and we extend the action of the isometry $\alpha$.
First of all we claim that $\div(T)\neq 1$: indeed the extremal ray $l$ corresponds to a birational transformation which is not divisorial. In \cite{Mon13b}, numerical orbits of extremal rays are classified and $\div(T)=1$ corresponds only to a divisorial contraction, as stated in \cite[Remark 2.5]{Mon13b} (see also \cite[Section 9]{Mar11}).
We must distinguish two cases: in the first one the involution $\alpha$ acts trivially on $A_X$ and in the second it acts as an involution.
\begin{itemize}
\item Let $\alpha$ act trivially on $A_X$. This implies that $\alpha$ extends trivially on $\langle v\rangle:=H^2(X,\mathbb{Z})^\perp$ inside $\Lambda_{24}$. By \Ref{oss}{G_tors}, $T_\alpha(\Lambda_{24})$ is $2$-elementary and is an overlattice of $\langle v, T, D\rangle$, where the latter has discriminant $-2(n-1)(\div (T))^2$ and its discriminant group is generated by $[v/(2n-2)],[D/\div(T)]$ and $[T/\div(T)]$. The lattice $T_\alpha(\Lambda_{24})$ contains $w:=(v+T)/\div(T)$ and the lattice $\langle w,D,T \rangle$ has discriminant $2(n-1)$. No further overlattice is contained in $\Lambda_{24}$. As in \Ref{oss}{overlattice}, the discriminant group contains $[(v-D)/div(T)]$, which must have order 1 or 2. Therefore $n=\div(T)=2$ and $T^2=-10$, which agrees with our claim.
\item Let $\alpha$ act as an involution on $A_X$. This implies that $\alpha$ acts as an involution on $H^2(X,\mathbb{Z})^\perp$ inside $\Lambda_{24}$, therefore $T_\alpha(X)=T_\alpha(\Lambda_{24})$ and it is $2$-elementary. This implies $\div(T)=2$.  Therefore $T^2\equiv-2(n+3)\,mod\,4$ and $\div(T)=2$, as this is the only possibility in \cite[Remark 2.5]{Mon13b}. This implies our claim.
\end{itemize}

\end{proof}
\end{prop}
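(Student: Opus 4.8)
The plan is to embed $H^2(X,\Z)$ primitively inside $\Lambda_{24}$, extend the non-symplectic involution $\alpha$ to an isometry of $\Lambda_{24}$ using \Ref{lem}{aut_extend}, and then compute the invariant lattice $T_\alpha(\Lambda_{24})$ explicitly. The key inputs are: (a) that $\div(T)\neq 1$, which follows from the classification of numerical orbits of extremal rays in \cite{Mon13b} together with \cite[Remark 2.5]{Mon13b} — a non-divisorial extremal ray cannot have divisibility $1$; and (b) the constraint that $T_\alpha(\Lambda_{24})$ must be a primitive sublattice of the unimodular lattice $\Lambda_{24}$, so its discriminant form is severely restricted. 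I would split into the two cases according to whether $\alpha$ acts trivially or as $-\id$ on the discriminant group $A_X$.

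In the first case ($\alpha$ trivial on $A_X$), the extension of $\alpha$ is trivial on the rank-one complement $\langle v\rangle=H^2(X,\Z)^\perp$, so $v, T, D$ all lie in $T_\alpha(\Lambda_{24})$, which by \Ref{oss}{G_tors} is $2$-elementary. I would start from the lattice $\langle v,T,D\rangle$, compute its discriminant (using $v^2=2n-2$, $D^2=0$, $T\cdot D=\div(T)$, and $T^2$), write down its discriminant group with generators $[v/(2n-2)]$, $[D/\div(T)]$, $[T/\div(T)]$, and then identify which isotropic overlattices sit inside $\Lambda_{24}$. The vector $w=(v+T)/\div(T)$ should lie in $T_\alpha(\Lambda_{24})$, and $\langle w,D,T\rangle$ has smaller discriminant $2(n-1)$; demanding that the remaining discriminant group (containing $[(v-D)/\div(T)]$) has order $\leq 2$ forces $n=\div(T)=2$ and hence $T^2=-10$, matching the claim $-2(n+3)$ at $n=2$. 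In the second case ($\alpha$ acts as $-\id$ on $A_X$), $\alpha$ acts as an involution on $\langle v\rangle$, so $T_\alpha(X)=T_\alpha(\Lambda_{24})$ and this lattice is $2$-elementary; then $\div(T)=2$ by \cite[Remark 2.5]{Mon13b}, and reducing $T^2$ modulo $4$ (using that $T/2=l$ pairs integrally, and the congruence for divisibility-$2$ classes from $H^2(K3^{[n]},\Z)$) gives $T^2\equiv -2(n+3)\pmod 4$. For the "generic $X$, any $n$" statement, the Picard lattice is exactly $\langle T,D\rangle$; with $\div(T)=2$, $D^2=0$, $T\cdot D=2$ and $T^2\equiv -2(n+3)\pmod 4$, one checks that for $n$ odd this is isometric to $U(2)$ and for $n$ even to $(2)\oplus(-2)$ (both are determined by their Gram matrices up to isometry in this rank-two $2$-elementary situation).

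The main obstacle I expect is the bookkeeping in the first case: correctly enumerating the isotropic subgroups of the discriminant group of $\langle v,T,D\rangle$ and verifying that only the claimed overlattice embeds in $\Lambda_{24}$, i.e.\ ruling out the other candidate overlattices by a discriminant-form or length argument. One must be careful that the generators $[v/(2n-2)]$, $[T/\div(T)]$, $[D/\div(T)]$ are presented correctly and that the isotropy conditions on $\mathbb{Q}/2\mathbb{Z}$-valued forms are handled without sign errors; this is exactly where \Ref{oss}{overlattice} (overlattices $\leftrightarrow$ isotropic subgroups, with discriminant $H^\perp/H$) does the heavy lifting. The congruence $T^2\equiv -2(n+3)\pmod 4$ in the second case is comparatively routine once one knows $\div(T)=2$, since it just records that $T$ is twice a class $l$ with $l$ pairing integrally against all of $H^2(X,\Z)^\vee$.
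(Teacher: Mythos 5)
Your proposal follows essentially the same route as the paper's own proof: the same primitive embedding into $\Lambda_{24}$ with extension of $\alpha$, the same appeal to \cite[Remark 2.5]{Mon13b} to rule out $\div(T)=1$, the same case split on the action on $A_X$, the same overlattice bookkeeping via $w=(v+T)/\div(T)$ forcing $n=\div(T)=2$ in the trivial-action case, and the same $2$-elementary/congruence argument in the involution case. The only addition is that you spell out the final identification of the generic rank-two Picard lattice with $U(2)$ or $(2)\oplus(-2)$, which the paper leaves implicit.
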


Recently, Bakker \cite[Theorem 2]{Bak13} has obtained through different means that the class of a line inside a lagrangian $\mathbb{P}^n$ sitting in a manifold of \kntiposp has square $-\frac{n+3}{2}$ and the dual divisor $T$ has square $-2(n+3)$ and divisibility $2$. 

\begin{cor}
Let $X$ be a manifold of \kntiposp with a lagrangian fibration with a smooth section, $n\geq 3$. Then the involution given by changing sign on smooth fibres is not induced.
\begin{proof}
Since $n\geq 3$, it follows from \Ref{prop}{lagr_lattice} and its proof that the involution acts non-trivially on $A_X$, therefore the invariant lattice inside $\Lambda_{24}$ is isometric to the one on $X$ and does not contain a copy of $U$.
\end{proof}
\end{cor}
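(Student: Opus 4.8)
The plan is to leverage \Ref{prop}{lagr_lattice} together with the numerical criterion for induced automorphisms (\Ref{defn}{k3n_induced} and \Ref{thm}{k3n_induced}), reducing everything to a statement about the invariant lattice inside the Mukai lattice $\Lambda_{24}$. Recall that for the involution $\alpha$ to be numerically induced, two things must hold: first, $\alpha$ acts trivially on the discriminant group $A_X$; and second, the $(1,1)$-part of the invariant lattice $T_\alpha(\Lambda_{24})$ contains a copy of $U$ as a direct summand. I will show that the first condition already fails as soon as $n\geq 3$.

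The key step is to extract from the proof of \Ref{prop}{lagr_lattice} the fact that, for $n\geq 3$, we are necessarily in the second of the two cases analysed there, namely the one where $\alpha$ acts as a nontrivial involution on $A_X$. Indeed, in the first case (where $\alpha$ acts trivially on $A_X$) the argument in that proof forced $n=\div(T)=2$; so for $n\geq 3$ only the second case survives, and there $\alpha$ acts as an involution on $A_X$ that is genuinely nontrivial. This is exactly the failure of the first bullet of \Ref{defn}{k3n_induced}. Concretely, one should note that since $T_\alpha(X)$ has rank $2$ (spanned by $T$ and $D$) and the total rank of $H^2(X,\Z)$ is $23$, the co-invariant lattice $S_\alpha(X)$ has rank $21$; and for the involution the glue between invariant and co-invariant parts forces a nontrivial action on $A_X$ precisely because the invariant lattice $\langle T,D\rangle$ with $\div(T)=2$ is not $2$-elementary-compatible with a trivial discriminant action in the way the first case requires.

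For completeness one should also rule out, for $n\geq 3$, that even if one could extend $\alpha$ to $\Lambda_{24}$ (after composing suitably), the invariant lattice $T_\alpha(\Lambda_{24})$ contains $U$. But this is immediate from the proof of \Ref{prop}{lagr_lattice}: in the nontrivial-action case one has $T_\alpha(X)=T_\alpha(\Lambda_{24})$, which has rank $2$ and is generated by $T$ and $D$ with $D^2=0$, $\div(T)=2$, $T^2=-2(n+3)$ (at least $T^2\equiv -2(n+3)\bmod 4$), and $(T,D)=2$. A rank-$2$ lattice containing $U$ must itself be isometric to $U$, which is unimodular; but here the discriminant is $-4$ (or has absolute value divisible by $4$), so $U$ cannot be a summand. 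Hence $\alpha$ is not numerically induced, and a fortiori not induced by \Ref{thm}{k3n_induced}.

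The main obstacle is making the dichotomy from \Ref{prop}{lagr_lattice} fully rigorous in the stated generality: the cited proposition only computes $\div(T)$ and the square of $T$ unconditionally for $n\leq 3$, while for larger $n$ the claim that $\alpha$ must act nontrivially on $A_X$ rests on the impossibility of the ``trivial action'' branch, which that proof dispatches by a short overlattice computation inside $\Lambda_{24}$. So the real work is to confirm that this overlattice argument (the one concluding $n=\div(T)=2$ in the trivial-action case) genuinely applies for all $n$, not just $n\leq 3$; granting that, the corollary follows immediately since a rank-$2$ invariant lattice with nonunimodular discriminant can never contain the hyperbolic plane.
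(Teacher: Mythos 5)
Your proposal is correct and takes essentially the same route as the paper: the paper's one-line proof likewise extracts from the case analysis in the proof of \Ref{prop}{lagr_lattice} that the trivial-action-on-$A_X$ branch forces $n=2$, so for $n\geq 3$ the involution acts non-trivially on $A_X$ and hence $T_\alpha(\Lambda_{24})=T_\alpha(X)=\langle T,D\rangle$, a rank-two lattice of discriminant $-4$ that cannot contain $U$. Your closing worry is unfounded, since the overlattice computation in that proof is carried out for arbitrary $n$ (only the exact value of $T^2$ uses $n\leq 3$); the one small slip is that the implication ``induced $\Rightarrow$ numerically induced'' comes from the remark following \Ref{defn}{k3n_induced}, not from \Ref{thm}{k3n_induced}, which is its converse.
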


\begin{rmk}
Note that this is another class of examples of automorphisms acting non-trivially on the discriminant group.
\end{rmk}

\begin{prop}
Keep notation as above and let $X$ be deformation equivalent to $Og_{10}$. Then the lattice spanned by $T$ and $D$ is isometric to $U$.
\begin{proof}
We start by embedding $H^2(X,\mathbb{Z})$ inside $\Lambda_{24}\oplus U$ and we extend the action of the isometry $\alpha$ to this lattice. We claim that $div(T)=1$. Indeed, suppose $div(T)\neq 1$, \ie $div(T)=3$. This implies that the lattice generated by $T$ and $D$ has discriminant $9$. After we embed it in the rank $26$ unimodular lattice, the invariant lattice of this involution will have discriminant divisible by 3 (regardless of the action of the involution on the discriminant group $A_X$), therefore it cannot be the invariant lattice of an involution. 
\end{proof} 
\end{prop}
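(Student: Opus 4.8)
The plan is to reduce the whole statement to computing the divisibility $\div(T)$. Recall from the discussion above that $D^2=0$, that the rank-two primitive sublattice $T_\alpha(X)=\langle T,D\rangle$ is the invariant lattice of the sign-change involution $\alpha$, and that $(T,D)=\div(T)$; hence the Gram matrix of $\{T,D\}$ has determinant $-\div(T)^2$, so $|A_{\langle T,D\rangle}|=\div(T)^2$. Since $H^2(X,\Z)$ is even, $T^2$ is even, so if $\div(T)=1$ then $\{T-\tfrac{T^2}{2}D,\,D\}$ is a hyperbolic basis and $\langle T,D\rangle\cong U$, whereas if $\div(T)>1$ the lattice is not unimodular, hence not isometric to $U$. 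So it suffices to prove $\div(T)=1$.

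First I would note that $T$ is primitive in $H^2(X,\Z)$: otherwise $\langle T,D\rangle$ would not be saturated, contradicting that an invariant lattice is always primitive. Since $A_X\cong\Z/3\Z$ for manifolds of $Og_{10}$-type, and since the divisibility of a primitive vector divides the exponent of the discriminant group, this already forces $\div(T)\in\{1,3\}$; it remains to rule out $\div(T)=3$. The cleanest route uses only the identity $2\,H^2(X,\Z)\subseteq T_\alpha(X)\oplus S_\alpha(X)$, valid for any involution: it shows that the index $[H^2(X,\Z):T_\alpha(X)\oplus S_\alpha(X)]$ is a power of $2$, so from the formula $|A_M|=[L:M]^2\,|A_L|$ for a finite-index sublattice $M\subseteq L$ and from $|A_X|=3$ we obtain $|A_{T_\alpha(X)}|\cdot|A_{S_\alpha(X)}|=3\cdot 2^{2b}$ for some $b\geq 0$. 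If $\div(T)=3$ then $|A_{T_\alpha(X)}|=9$, forcing $|A_{S_\alpha(X)}|=2^{2b}/3$, which is not an integer --- a contradiction. (Equivalently, and more in the spirit of the rest of the paper: embed $H^2(X,\Z)$ primitively into the even unimodular lattice $\Lambda_{24}\oplus U$, with rank-two orthogonal complement $K:=H^2(X,\Z)^\perp$ satisfying $|A_K|=3$; extend $\alpha$ to an involution $\tilde\alpha$ --- acting trivially on $K$ when $\alpha$ is trivial on $A_X$, via \Ref{lem}{aut_extend}, and as $-\id$ on $K$ otherwise --- and note that the invariant lattice $T_{\tilde\alpha}(\Lambda_{24}\oplus U)$ has $2$-elementary discriminant group by \Ref{oss}{G_tors}, while its discriminant group has order divisible by $3$ as soon as $\div(T)=3$.) In either case $\div(T)=1$, and $\langle T,D\rangle\cong U$.

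I expect the main obstacle to be the exclusion of $\div(T)=3$, and specifically the interaction between the action of $\alpha$ on $A_X\cong\Z/3\Z$ and the discriminant of the invariant lattice. In the embedding version this produces a genuine case split: $T_{\tilde\alpha}(\Lambda_{24}\oplus U)$ may equal $\langle T,D\rangle$ (discriminant group of order $9$) when $\alpha$ acts as $-\id$ on $A_X$, or be an overlattice of $\langle T,D\rangle\oplus K$ (order $27$ or $3$) when $\alpha$ acts trivially, and one must verify that the conclusion ``$3$ divides the order of the discriminant group'' holds in both subcases. The direct argument via $2\,H^2(X,\Z)\subseteq T_\alpha(X)\oplus S_\alpha(X)$ avoids this split, but it still rests on the structural input that $A_X$ is cyclic of order $3$ for $Og_{10}$-type, which simultaneously restricts the possible divisibilities of $T$ and controls the $3$-adic valuation that produces the contradiction.
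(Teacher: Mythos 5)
Your proof is correct, and your primary argument takes a genuinely different, more self-contained route than the paper's. The paper also first pins $\div(T)$ down to $\{1,3\}$ via $A_X\cong\Z/3\Z$, but it rules out $\div(T)=3$ by extending $\alpha$ to the rank-$26$ unimodular lattice $\Lambda_{24}\oplus U$ and invoking \Ref{oss}{G_tors}: the invariant lattice of an involution of a unimodular lattice has $2$-torsion discriminant group, whereas here its discriminant would be divisible by $3$ ``regardless of the action of the involution on $A_X$'' --- a phrase that silently absorbs exactly the case split you spell out in your parenthetical (invariant lattice equal to $\langle T,D\rangle$ of discriminant $9$ when $\alpha$ acts as $-\id$ on $A_X$, versus an overlattice of $\langle T,D\rangle\oplus K$ of discriminant $27$ or $3$ when it acts trivially). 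Your main argument instead stays entirely inside $H^2(X,\Z)$: from $2\,H^2(X,\Z)\subseteq T_\alpha(X)\oplus S_\alpha(X)$ the index is a power of $2$, so $|A_{T_\alpha(X)}|\cdot|A_{S_\alpha(X)}|=3\cdot 2^{2b}$, which is incompatible with $|A_{T_\alpha(X)}|=\div(T)^2=9$. This avoids both the embedding and the case analysis, and is if anything cleaner; what the paper's route buys is uniformity with the neighbouring propositions for \kntiposp and Kummer-type manifolds, which all run through the Mukai-lattice embedding and the same $2$-elementarity remark. You also make explicit two steps the paper leaves implicit: that $\div(T)$ divides the exponent of $A_X$, and that $\div(T)=1$ together with $D^2=0$ and evenness of $T^2$ yields a hyperbolic basis of $\langle T,D\rangle$.
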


\begin{prop}
Keep notation as above and let $X$ be a manifold of Kummer $n$-type. Then the lattice spanned by $T$ and $D$ is isometric to $U(2)$ if $n$ is odd and to $(2)\oplus (-2)$ otherwise.
\begin{proof}
As in \Ref{prop}{lagr_lattice}, we reduce to the case where the involution is not trivial on $A_X$, since here the orthogonal to the embedding inside $\Lambda_8$ has square $2n+2$. Suppose $div(T)\neq 1$: then $T^2\equiv 2k+2$ modulo $8$, which implies our claim. We only need to exclude the case $div(T)=1$, which can be done using Yoshioka's \cite[Proposition 1.2]{Yos12} characterization of the ample cone of moduli spaces on abelian surfaces and extending it to arbitrary deformations with \cite[Theorem 1.3]{Mon13b}. From it, no wall divisor has divisibility $1$, hence our claim. 
\end{proof}
\end{prop}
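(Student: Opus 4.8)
The plan is to follow the proof of \Ref{prop}{lagr_lattice} almost verbatim, replacing the Mukai lattice $\Lambda_{24}$ by the even unimodular lattice $\Lambda_8$ of signature $(4,4)$. First I embed $H^2(X,\Z)$ primitively into $\Lambda_8$ and set $\langle v\rangle:=H^2(X,\Z)^\perp\subset\Lambda_8$; since $A_X\cong\Z/(2n+2)$ and $\Lambda_8$ is unimodular, $\langle v\rangle$ is a rank-one lattice with $v^2=2n+2$. The involution $\alpha$ obtained by changing sign on the smooth fibres of the lagrangian fibration is non-symplectic, hence acts on $A_X$ as $\pm\id$, so it extends to an isometry of $\Lambda_8$ with $\alpha(v)=\pm v$, and I split the argument according to the sign. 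Throughout, recall from the preceding discussion that $T_\alpha(X)$ is exactly the lattice $\langle T,D\rangle$ spanned by $D$ (pull-back of a hyperplane, $D^2=0$) and $T$ (with $(T,D)=\div(T)$), so its Gram matrix is $\left(\begin{smallmatrix}T^2&\div(T)\\\div(T)&0\end{smallmatrix}\right)$.

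Suppose first that $\alpha$ acts trivially on $A_X$. Then it extends fixing $v$, so by \Ref{oss}{G_tors} the lattice $T_\alpha(\Lambda_8)$ is $2$-elementary and is an overlattice of $\langle v,T,D\rangle$, whose determinant equals $-(2n+2)\div(T)^2$. Running the same bookkeeping as in \Ref{prop}{lagr_lattice} — passing to the overlattice generated by $(v+T)/\div(T)$ and imposing that the resulting discriminant form be $2$-elementary — yields a divisibility constraint in which the relevant integer is $2n+2$ instead of $2n-2$. This constraint cannot be met, so, in contrast to the \kntiposp case (where it pinned $n=2$), the trivial case does not occur here; this is precisely what the phrase ``since here the orthogonal to the embedding inside $\Lambda_8$ has square $2n+2$'' refers to.

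Hence $\alpha(v)=-v$, so $v\in S_\alpha(\Lambda_8)$ and therefore $T_\alpha(X)=T_\alpha(\Lambda_8)$; by \Ref{oss}{G_tors} this lattice is $2$-elementary. It is $\langle T,D\rangle$, of rank $2$ and determinant $-\div(T)^2<0$, hence of signature $(1,1)$. Its discriminant group contains the class of $D/\div(T)$, an element of order $\div(T)$, so $2$-elementarity forces $\div(T)\in\{1,2\}$; and $\div(T)=1$ is excluded because then the extremal ray $l=T/\div(T)$ would be a wall divisor of divisibility $1$, which cannot occur on a manifold of Kummer $n$-type by Yoshioka's description \cite[Prop.~1.2]{Yos12} of the ample cone of moduli spaces on abelian surfaces, transported to $X$ via the deformation statement \cite[Thm.~1.3]{Mon13b}. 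Thus $\div(T)=2$ and the discriminant group has length $a=2$. By Nikulin's classification a $2$-elementary lattice of rank $2$ and signature $(1,1)$ is determined by $a$ and by $\delta$, so it only remains to compute $T^2$ modulo $4$. The class $l=T/2$ is the unique element of order $2$ of $A_X\cong\Z/(2n+2)$, i.e.\ $(n+1)$ times a generator $g$ with $q(g)=-\tfrac{1}{2n+2}$ in $\Q/2\Z$; evaluating the discriminant quadratic form gives $T^2/4=l^2\equiv-\tfrac{n+1}{2}\pmod2$, hence $T^2\equiv-2(n+1)\pmod8$. In particular $4\mid T^2$ when $n$ is odd and $T^2\equiv2\pmod4$ when $n$ is even. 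Finally, if $4\mid T^2$ the substitution $T\mapsto T-(T^2/4)D$ brings the Gram matrix of $\langle T,D\rangle$ to that of $U(2)$, so $\delta=0$ and $\langle T,D\rangle\cong U(2)$; if $T^2\equiv2\pmod4$ a similar substitution identifies $\langle T,D\rangle$ with $\langle2\rangle\oplus\langle-2\rangle$, so $\delta=1$. This is the claim.

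The step I expect to be the main obstacle is the exclusion of $\div(T)=1$: in the \kntiposp case one simply quotes the classification of numerical orbits of extremal rays, whereas here one must first transfer Yoshioka's ample-cone computation from Albanese fibres of moduli spaces on abelian surfaces to an arbitrary manifold of Kummer $n$-type via a deformation argument, and then check that no wall divisor of such a manifold has divisibility $1$. A minor but necessary point along the way is justifying that the non-symplectic involution $\alpha$ acts on $A_X$ as $\pm\id$, so that the case dichotomy used above is exhaustive.
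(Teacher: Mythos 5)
Your argument follows the paper's proof step for step: reduce to the case where $\alpha$ acts non-trivially on $A_X$ (so that $T_\alpha(X)=T_\alpha(\Lambda_8)=\langle T,D\rangle$ is $2$-elementary), exclude $\div(T)=1$ via Yoshioka's ample-cone computation transported to arbitrary deformations by Mongardi's result, and then read the lattice off from $T^2$ modulo $4$. The only (harmless) discrepancy is in the mod-$8$ congruence --- your discriminant-form computation gives $T^2\equiv-2(n+1)\pmod 8$ where the paper writes $2k+2$ --- but the two agree modulo $4$, which is all that is needed to distinguish $U(2)$ from $\langle 2\rangle\oplus\langle-2\rangle$.
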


As we have exploited in the previous propositions, the presence of an involution imposes rather strict conditions on a section. Indeed we can use them to prove that there exist lagrangian fibrations which can not be deformed to a lagrangian fibration with section, as the following example shows.

\begin{ex}
Let $S$ be a generic $K3$ with a polarisation $H$ of degree $2$ and let $X:=S^{[10]}$ with Picard lattice generated by $H$ and the exceptional divisor $\delta$. The class $F:=3H-\delta$ has square $0$ and divisibility $3$, moreover it lies at the border of the movable cone of $X$. Therefore, by \cite[Remark 11.4]{BM13}, there exists an \issp manifold $Y$ birational to $X$, were the image of $F$ is the class of $\pi^*(\mathcal{O}(1))$ under a lagrangian fibration $\pi\,:\,Y\rightarrow \mathbb{P}^{10}$. By keeping the class of $F$ algebraic, this fibration deforms in a $20$ dimensional family. However, no such fibration has a section. Indeed, if $T$ is the dual divisor to the class of a line in the section, $F$ and $T$ must generate a $2$ elementary lattice as in \Ref{prop}{lagr_lattice}. But $div(F)=3$ implies that this can not happen. 
\end{ex}

\end{document}